\newtheorem{theo}{Theorem}[section]
\newtheorem{prop}[theo]{Proposition}
\newtheorem{lemm}[theo]{Lemma}
\newtheorem{cor}[theo]{Corollary}
\newtheorem{claim}[theo]{Claim}
\newtheorem{conj}[theo]{Conjecture}
\numberwithin{equation}{section}
\theoremstyle{definition}
\newtheorem{defi}[theo]{Definition}
\newtheorem{ex}[theo]{Example}
\newtheorem{step}{Step}
\newtheorem{setting}[theo]{Setting}
\theoremstyle{remark}
\newtheorem{rem}[theo]{Remark}
\newcommand{\codim}[0]{\operatorname{codim}}
\newcommand{\Aut}[0]{\operatorname{Aut}}
\newcommand{\pr}[0]{\operatorname{pr}}
\newcommand{\Image}[0]{\operatorname{Im}}
\newcommand{\Supp}[0]{\operatorname{Supp}}
\newcommand{\End}[0]{\operatorname{End}}
\newcommand{\QQ}{\mathbb{Q}}
\newcommand{\Exc}{\mathrm{Exc}}
\newcommand{\ddbar}{dd^c}
\newcommand{\dbar}{\overline{\partial}}
\newcommand{\e}{\varepsilon}
\newcommand{\OX}{\mathcal{O}}
\newcommand{\Unv}[1]{{#1}^{\rm{univ}}}
\newcommand{\ilim}[1][]{\mathop{\varinjlim}\limits_{#1}}
\def \ZZ {\mathbb Z} %
\def \CC {\mathbb C} %
\def \QQ {\mathbb Q} %
\def \PP {\mathbb P}  %
\newcommand{\inv}{^{-1}} 
\def \ZZ {\mathbb Z} %
\def \scrJ {\mathscr J} %
\def \calE {\mathcal E} %
\def \calF {\mathcal F} %
\def \calG {\mathcal G} %
\def \calH {\mathcal H} %
\def \calI {\mathcal I} %
\def \calV {\mathcal V} %
\def \calW {\mathcal W} %
\DeclareMathOperator{\id}{id} %
\DeclareMathOperator{\Coh}{H} %
\DeclareMathOperator{\dimcoh}{h} %
\DeclareMathOperator{\Pic}{Pic} %
\DeclareMathOperator{\rank}{rk} %
\DeclareMathOperator{\reg}{reg} %
\DeclareMathOperator{\sing}{sing} %
\DeclareMathOperator{\Sym}{Sym} %
\DeclareMathOperator{\volume}{vol}
\DeclareMathOperator{\Exceptional}{Exc}
\newcommand{\terminal}{\text{\rm\tiny term}}
\newcommand{\qf}{\text{\rm\tiny qf}}
\DeclareMathOperator{\free}{f} %
\DeclareMathOperator{\base}{b} %
\DeclareMathOperator{\GL}{GL}
\DeclareMathOperator{\PGL}{PGL}
\let\Gamma\varGamma
\let\Delta\varDelta
\let\Theta\varTheta
\let\Lambda\varLambda
\let\Xi\varXi
\let\Pi\varPi
\let\Sigma\varSigma
\let\Upsilon\varUpsilon
\let\Phi\varPhi
\let\Psi\varPsi
\let\Omega\varOmega
\begin{document}

\date{\today, version 0.01}

\title[]{Structure theorem for projective klt pairs \\with nef anti-canonical divisor}

\author{Shin-ichi MATSUMURA}
\address{Mathematical Institute, Tohoku University, 
6-3, Aramaki Aza-Aoba, Aoba-ku, Sendai 980-8578, Japan.}
\email{{\tt mshinichi-math@tohoku.ac.jp}}
\email{{\tt mshinichi0@gmail.com}}

\author{Juanyong WANG}
\address{Academy of Mathematics and Systems Science, Chinese Academy of Sciences, 
55 East Road Zhongguancun, 100190 Beijing, China} 
\email{{\tt juanyong.wang@amss.ac.cn}}
\email{{\tt serredeciel@gmail.com}}

\renewcommand{\subjclassname}{%
\textup{2010} Mathematics Subject Classification}
\subjclass[2010]{Primary 14E30, Secondary 32Q30, 32J25.}

\keywords{}

\maketitle

\begin{abstract}
In this paper, we establish a structure theorem for a projective klt pair $(X,\Delta)$ with nef anti-log canonical divisor. 
Specifically, we prove that, up to replacing $X$ with a finite quasi-\'etale cover, the variety $X$ 
admits a locally trivial rationally connected fibration onto a projective klt variety with numerically trivial canonical divisor. 
As an application, we extend the Beauville-Bogomolov decomposition to projective klt Calabi-Yau pairs, by showing that klt Calabi-Yau pairs, which naturally appear as an outcome of the Log Minimal Model Program, are decomposed into building block varieties, namely rationally connected varieties and Calabi-Yau varieties. 

%This extends the Beauville-Bogomolov decomposition to projective klt Calabi-Yau pairs. 
%As an application, we show that projective klt Calabi-Yau pairs, which naturally appear as an outcome of the Log Minimal Model Program, are decomposed into building block varieties, namely rationally connected varieties and Calabi-Yau varieties. 

\end{abstract}          
 
\tableofcontents

\section{Introduction}\label{sec-intro}

\subsection{Structure of varieties with  `semi-positive' curvature}
We work over the complex number field throughout this paper. 
A central problem in birational geometry is revealing the structures of fibrations naturally associated with varieties 
so that the projective varieties can be decomposed into basic building blocks, 
namely  Fano varieties, Calabi-Yau varieties, and (log) canonical models. 
The abundance conjecture predicts that varieties with `semi-negative' curvature can be revealed by the Iitaka-Kodaira fibrations via the Minimal Model Program (MMP). 
Meanwhile, the pioneering studies \cite{Mori79, SY80}  showed that  varieties with `semi-positive' curvature have 
a certain rigidity and are closely related to the geometry of rational curves. 
Based on this philosophy, several structure theorems have been established for Albanese maps or maximal rationally connected (MRC) fibrations of varieties with various semi-positivity conditions, including compact K\"ahler manifolds with semi-positive holomorphic bisectional curvature \cite{HSW81, MZ86, Mok88}; projective manifolds with semi-positive holomorphic sectional curvature \cite{Yang16, Yang18, HW20, Mat18c}; compact K\"ahler manifolds with nef tangent bundle \cite{CP91, DPS94}; projective manifolds with pseudo-effective tangent bundle \cite{HIM19}; and projective manifolds with nef anti-canonical divisor \cite{Cao19, CH19, DLB20}.
In particular, projective manifolds with nef anti-canonical divisor cover a large range of varieties with `semi-positive' curvature, and the study  of anti-canonical divisors is more natural than that of tangent bundles 
from the perspective of birational geometry. 

In this paper, we study  projective varieties with nef anti-canonical divisor. The class of such varieties includes Fano manifolds and Calabi-Yau manifolds as extreme cases, where Mori's bend and break and the Beauville-Bogomolov decomposition have respectively played a decisive role in revealing their structures. 
Furthermore, on the basis of the previous works \cite{Paun97, Zhang96, Zhang05}, Cao-H\"oring established a structure theorem for smooth projective varieties with nef anti-canonical divisor, which interpolates between `Fano-like' manifolds (rationally connected manifolds) and  Calabi-Yau manifolds.

\begin{itemize}
\item[$\bullet$] (Mori's bend and break, \cite{Mori79, KoMM92, Cam92}). 
Fano manifolds (i.e.,\,smooth projective varieties with ample anti-canonical divisor) are rationally connected (i.e.,\,any two points can be connected by a rational curve). 
\item[$\bullet$] (Beauville-Bogomolov decomposition, \cite{Bea83}).
Calabi-Yau manifolds (i.e.,\,compact K\"ahler manifolds with numerically trivial canonical divisor) 
are decomposed, up to a finite \'etale cover, into the product of abelian varieties, strict Calabi-Yau manifolds, and 
holomorphic symplectic (hyperk\"ahler) manifolds. 
\item[$\bullet$] (Cao-H\"oring's structure theorem, \cite{Cao19, CH19}). 
Projective manifolds with nef anti-canonical divisor are constructed by rationally connected manifolds and Calabi-Yau manifolds. 
\end{itemize}

From the perspective of the MMP, it is more natural and of great importance to study the structure theorem not only for smooth varieties but also for varieties with klt singularities. 
In this direction,  the works \cite{Zhang06, HM07} generalized the rational connectedness of Fano manifolds to varieties of weak Fano type;  
that is, if $(X, \Delta)$ is a weak Fano pair (i.e.,\,a klt pair with nef and  big anti-log canonical divisor), then $X$ is rationally connected. 
Furthermore, the successive works \cite{GKP16a, Druel18a, GGK19, HP19, CP19, Cam20} generalized the Beauville-Bogomolov decomposition to klt Calabi-Yau varieties, which we refer to as {\textit{the singular Beauville-Bogomolov decomposition}} in this paper.
The most important  problem remaining in this field is to establish a structure theorem  for \textit{klt pairs with nef anti-log canonical divisor}, in the form of extending Cao-H\"oring's structure theorem, Zhang-Hacon-$\mathrm{M^{c}}$Kernan's result, and the singular Beauville-Bogomolov decomposition.

\subsection{Projective klt pairs with nef anti-log canonical divisor}

In this paper, we establish a structure theorem 
for projective klt pairs  with nef anti-log canonical divisor (see {\hyperref[thm-main]{Theorem \ref*{thm-main}}}), 
which  naturally generalizes Cao-H\"oring's structure theorem for smooth projective varieties 
and Hacon-$\mathrm{M^{c}}$Kernan's result for  varieties of weak Fano type. 
Moreover, our structure theorem reduces some problems on the structure of klt pairs with nef anti-log canonical divisor 
to the singular Beauville-Bogomolov decomposition for klt Calabi-Yau varieties.

\begin{theo}\label{thm-main}
Let $(X, \Delta)$ be a projective klt pair with the nef anti-log canonical divisor $-(K_X+\Delta)$. 
Then, there exists a finite quasi-\'etale  cover $\nu \colon X' \to X$ 
satisfying the following properties$:$
\begin{enumerate}
\item[$(1)$] $X'$ admits a holomorphic MRC fibration $\psi\colon X' \to Y$.   

\item[$(2)$] $Y$ is a projective klt variety with numerically trivial canonical divisor. 

\item[$(3)$] $\psi\colon X' \to Y$ is a locally constant fibration with respect to the pair $(X',   \Delta')$, 
where $\Delta'$ is the Weil $\QQ$-divisor defined by the pullback $\Delta':=\nu^* \Delta$. 
In particular, the fibration $\psi\colon X' \to Y$ is locally trivial with respect to $(X', \Delta')$, 
i.e.,\,for a sufficiently small open set $B \subset Y$, there exists an isomorphism
$$(\psi^{-1} (B), \Delta' ) \simeq B \times (F, \Delta' _{F})$$
over $B \subset Y$, 
where $F$ is the fiber of $\psi$ and $\Delta' _{F}:=\Delta' |_{F}$.
\end{enumerate} 
\end{theo}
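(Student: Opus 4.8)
The plan is to follow the strategy pioneered by Cao--Höring in the smooth case, but to work systematically in the singular (klt) category, where the key technical tools are the positivity of direct images and the theory of foliations with klt-type singularities. First I would construct an MRC fibration $\psi\colon X \dashrightarrow Y$ and aim to show it is (after a quasi-étale cover) holomorphic with klt base $Y$ having $K_Y \equiv 0$. The starting point is that $Y$ is not uniruled, so by the solution of the abundance-type statement in low Kodaira dimension together with positivity results, $K_Y$ is pseudo-effective; the heart of the matter is to prove $K_Y \equiv 0$. For this I would use the pseudo-effectivity of the relative anti-canonical divisor pushed down: the nefness of $-(K_X+\Delta)$, combined with Cao's positivity theorem for direct image sheaves (the singular analogue of $f_* \mathcal{O}(m(K_{X/Y}+\Delta))$-positivity, or rather the dual statement for relative anti-canonical), yields that $-(K_Y)$ is pseudo-effective \emph{modulo} the boundary contributions; balancing this against $K_Y$ pseudo-effective forces numerical triviality. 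One must be careful that the MRC fibration is only a rational map a priori, so I would first pass to a resolution, run an appropriate relative MMP to make the fibration equidimensional or at least control the discrepancies, and then descend.

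The second main step is upgrading ``holomorphic MRC fibration with $K_Y\equiv 0$'' to ``locally constant fibration'' in the sense of the theorem, i.e.\ that $(X',\Delta')$ is étale-locally over $Y$ a product $B \times (F,\Delta'_F)$. Here the mechanism is the following: once $K_Y \equiv 0$ and $-(K_{X'}+\Delta') $ is nef, the relative anti-canonical bundle $-(K_{X'/Y}+\Delta')$ is also nef (since $\psi^* K_Y \equiv 0$), and one shows the direct image sheaves $\psi_*\mathcal{O}_{X'}(-m(K_{X'/Y}+\Delta'))$, or a suitable Hodge-theoretic sheaf attached to the fibration, are both pseudo-effective \emph{and} have vanishing first Chern class, which by a Bogomolov--Simpson--type flatness statement (numerically flat $\Rightarrow$ local system) makes them locally constant. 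Concretely I would invoke the characterization that a fibration is locally constant iff the associated variation is a local system with finite monodromy, pass to the finite quasi-étale cover $\nu\colon X'\to X$ that kills the monodromy, and verify the fibers are all isomorphic as klt pairs $(F,\Delta'_F)$. This requires knowing that the fibers of $\psi$ are themselves of Fano type (rationally connected klt, hence by Hacon--M\textsuperscript{c}Kernan $-(K_F+\Delta'_F)$ controls them), and applying the deformation-invariance / Fujita-type rigidity for such fibers.

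I expect the \textbf{main obstacle} to be Step~1, specifically proving $K_Y \equiv 0$ rather than merely $K_Y$ pseudo-effective, in the singular setting where one cannot freely use smoothness of $X$, $Y$, or the total space of a resolution without tracking discrepancy divisors. The delicate point is that the positivity of $\psi_*\mathcal{O}(-m(K_{X/Y}+\Delta))$ one would like to exploit does not come ``for free'' from nefness of $-(K_X+\Delta)$ the way positivity of $f_*\omega^{\otimes m}$ does from the fundamental theorem on direct images; one instead needs the recent results on the pseudo-effectivity of $-K_{X/Y}$ under nef $-K_X$ (Cao--Höring, and its klt extension), applied carefully to the MRC fibration after a birational modification, and then one must check these positivity statements descend through the modification and through the quasi-étale cover. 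A secondary technical difficulty is ensuring the MRC fibration can be taken \emph{holomorphic} with klt base in the first place --- this uses that the MRC quotient of a klt variety with nef anti-log-canonical divisor is non-uniruled hence (by the cone theorem / absence of $K_Y$-negative extremal rays after the right preparation) behaves well --- and that the constructions are compatible with replacing $X$ by the quasi-étale cover $X'$, which is where functoriality of MRC fibrations and of the boundary pullback $\Delta' = \nu^*\Delta$ must be used.
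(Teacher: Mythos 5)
Your outline captures the broad Cao--H\"oring skeleton (MRC fibration, positivity of direct images, flatness via numerical flatness, quasi-\'etale covers), but it misidentifies the two places where the singular case genuinely departs from the smooth one, and these are precisely the places where the paper's proof lives.

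First, the route to $K_Y \equiv 0$. You propose to show that ``$-K_Y$ is pseudo-effective modulo boundary contributions'' and balance this against $K_Y$ pseudo-effective. This does not work as stated: nefness of $-(K_X+\Delta)$ does not directly give pseudo-effectivity of $-K_Y$ (indeed nothing like this is used in the paper). What one actually gets from Zhang's theorem is only $\kappa(K_Y)=0$ and that pullbacks of effective representatives of $K_Y$ are $\pi$-exceptional (\hyperref[prop-2.2]{Proposition~\ref*{prop-2.2}(a),(b)}). The paper's mechanism for $K_Y \sim_{\QQ} 0$ is entirely different: one first proves a splitting $T_X \simeq \calF \oplus \calG$ with $K_{\calG}\sim_{\QQ}0$ via the flatness of the direct images, then uses Druel's theory of weakly regular foliations with canonical singularities to make the MRC fibration holomorphic and equidimensional, and finally reads off $K_{X/Y}\sim K_{\calF}\sim_{\QQ}K_X$ from semi-stability in codimension one, which forces $K_Y\sim_{\QQ}0$. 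Your proposed alternative of ``running a relative MMP to make the fibration equidimensional'' is not substantiated and is not what is done; the equidimensionality comes from \cite[Theorem 6.1]{Druel18b} applied to a foliation with canonical singularities, and establishing that canonicity (\hyperref[lemma_F-canonical-sing]{Lemma~\ref*{lemma_F-canonical-sing}}) is itself a non-trivial step using nefness of $-(K_X+\Delta)$.

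Second, and more seriously, you misidentify what the finite quasi-\'etale cover is for. It is not there to ``kill monodromy'' of a variation; the local constancy is extracted by entirely different means (\hyperref[prop_flat-lcf]{Proposition~\ref*{prop_flat-lcf}}, which requires the direct images to already be local systems satisfying a compatibility condition). The cover is needed because a pseudo-effective reflexive sheaf with $c_1=0$ on a \emph{singular} klt variety need not be locally free --- the tangent sheaf of a rationally connected $Q$-abelian variety (\hyperref[ex-qabelian]{Example~\ref*{ex-qabelian}}) is the counterexample --- so one must invoke \cite[Theorem~1.20]{GKP16b} to make $(\nu^*\calE_m)^{**}$ locally free after a quasi-\'etale Galois cover. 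This creates the central technical difficulty that your proposal does not see: the cover depends on $m$ (handled by \cite[Theorem~1.1]{GKP16b}), and worse, the MRC fibration of the cover $\bar X$ need not be compatible with that of $X$, because the fibers of $\psi$ might be rationally connected $Q$-abelian varieties whose preimages in $\bar X$ are honest abelian varieties, hence no longer rationally connected. Handling this requires an induction on the fiber dimension of MRC fibrations, a comparison of direct-image sheaves under base change via the flat base change theorem, and a termination argument using simple-connectedness of rationally connected klt fibers. This inductive machinery (\hyperref[subsec-fib]{Subsections~\ref*{subsec-fib}}--\ref*{subsec-nonalg}) is the heart of the paper, and it is absent from your outline.
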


Here {\it locally constant fibrations} are defined  in {\hyperref[def-constant]{Definition \ref*{def-constant}}}. 
The formulation of \hyperref[thm-main]{Theorem \ref*{thm-main}} using locally constant fibrations  is essentially important. 
This viewpoint of locally constant fibrations has not explicitly appeared in previous studies yet, but is often important in applications. 
For the time being, this fibration can be regarded as a locally trivial fibration while keeping in mind that the local constancy is a much stronger condition than the local triviality.
The structure theorems established in the previous studies do not require quasi-\'etale covers to be considered, 
but taking an appropriate quasi-\'etale cover is essential in the above theorem. 
Indeed, {\hyperref[thm-main]{Theorem \ref*{thm-main}}} does not hold without taking finite quasi-\'etale covers 
(see {\hyperref[rem-ex]{Remark \ref{rem-ex}}} for such an example).

By the singular Beauville-Bogomolov decomposition, the base variety $Y$ in {\hyperref[thm-main]{Theorem \ref*{thm-main}}}  has a finite quasi-\'etale cover $Y' \to Y$ such that $Y' $ is decomposed into the product of abelian varieties, strict Calabi-Yau varieties, and singular holomorphic symplectic varieties. Hence, by replacing $X'$ with the fiber product of $X' \times_{Y } Y'$, we deduce the following corollary: 

\begin{cor}\label{cor-main1}
Let $(X, \Delta)$ be a projective klt pair with the nef anti-log canonical divisor $-(K_X+\Delta)$. 
Then, there exists a finite quasi-\'etale cover $\nu\colon X' \to X$ 
satisfying the properties listed in {\hyperref[thm-main]{Theorem \ref*{thm-main}}} 
such that the base variety $Y$  of $\psi\colon X' \to Y$ 
is decomposed into the product
$$
Y \simeq A \times \prod_{i} Y_{i} \times \prod_{j} Z_{j}
$$
of an abelian  variety $A$,  strict Calabi-Yau varieties $Y_{i}$, and singular holomorphic symplectic varieties $Z_{j}$.  
\end{cor}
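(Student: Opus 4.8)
The plan is to deduce this directly from {\hyperref[thm-main]{Theorem \ref*{thm-main}}} together with the singular Beauville--Bogomolov decomposition, the only genuine work being to transport the decomposition of the base through the fibration by a base change.

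First I would apply {\hyperref[thm-main]{Theorem \ref*{thm-main}}} to $(X, \Delta)$, obtaining a finite quasi-\'etale cover $\nu \colon X' \to X$ and a holomorphic MRC fibration $\psi \colon X' \to Y$ onto a projective klt variety $Y$ with $K_{Y} \equiv 0$, locally constant --- in particular locally trivial --- with respect to $(X', \Delta')$, where $\Delta' = \nu^{*} \Delta$. Since $Y$ is a projective klt variety with numerically trivial canonical divisor, the singular Beauville--Bogomolov decomposition (\cite{GKP16a, Druel18a, GGK19, HP19, CP19, Cam21}) yields a finite quasi-\'etale cover $\tau \colon Y' \to Y$ with $Y' \simeq A \times \prod_{i} Y_{i} \times \prod_{j} Z_{j}$, with $A$ abelian, each $Y_{i}$ a strict Calabi--Yau variety and each $Z_{j}$ a singular holomorphic symplectic variety. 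I would then form the base change $X'' := X' \times_{Y} Y'$, with projections $\sigma \colon X'' \to X'$ and $\psi'' \colon X'' \to Y'$, set $\Delta'' := \sigma^{*} \Delta' = (\nu \circ \sigma)^{*} \Delta$ (a well-defined Weil $\QQ$-divisor, as $\nu \circ \sigma$ is finite), and check that $X''$, $Y'$, $\nu \circ \sigma$, $\psi''$, $\Delta''$ again satisfy all the conclusions of {\hyperref[thm-main]{Theorem \ref*{thm-main}}}; this is exactly the assertion of the corollary, after renaming $X''$ as $X'$ and $Y'$ as $Y$.

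The observation I would use throughout is the local triviality of $\psi$: over a small polydisc $B \subset Y$ one has $(\psi^{-1}(B), \Delta') \simeq B \times (F, \Delta'_{F})$ with $F$ the fiber of $\psi$, and $F$ is normal because $\psi^{-1}(B)$ is open in the normal variety $X'$ while $B$ is smooth. Pulling this back along $\tau$, over the preimage $B' \subset Y'$ of $B$ one obtains $((\psi'')^{-1}(B'), \Delta'') \simeq B' \times (F, \Delta'_{F})$, using that every fiber of $\psi''$ maps isomorphically onto the corresponding fiber of $\psi$. From these local product models I would read off that $X''$ is projective, normal (a product of normal complex varieties is normal), and connected with $Y'$ irreducible, hence irreducible; that $\psi''$ is locally trivial with fiber $F$, with local-constancy datum the pullback along $\tau$ of that of $\psi$, hence again preserving the pair $(F, \Delta'_{F})$, so that $\psi''$ is locally constant with respect to $(X'', \Delta'')$; and that, as $\psi$ is flat and $\tau$ is \'etale over the complement of a closed subset $Z \subset Y$ with $\codim Z \geq 2$, the preimage $\psi^{-1}(Z)$ has codimension $\geq 2$ in $X'$, so that $\sigma$, being the base change of $\tau$, is \'etale over $X' \setminus \psi^{-1}(Z)$ and hence finite quasi-\'etale, whence so is $\nu \circ \sigma \colon X'' \to X$. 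Finally, $\psi''$ is a holomorphic MRC fibration: its general fiber $F$ is rationally connected (as $\psi$ is an MRC fibration), while the base $Y'$ is not uniruled, being a product of an abelian variety and klt varieties with numerically trivial canonical divisor.

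The step I expect to be the main obstacle is keeping the fiber product $X''$ under control: a priori, a fiber product of normal varieties need be neither normal nor unramified in codimension one over its factors, and it is precisely the fact that $\psi$ is \emph{locally constant} --- hence analytically-locally a product projection --- that forces $X''$ to be normal and $\sigma$ to be quasi-\'etale. The remaining verifications, that the local product structure and the boundary divisor are compatible with the base change, are routine.
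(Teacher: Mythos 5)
Your approach---apply the singular Beauville--Bogomolov decomposition to $Y$ and then base change $X'$ along $Y' \to Y$---is exactly the one the paper indicates in the paragraph preceding the corollary, and the verifications you spell out (normality of the fiber product, quasi-\'etaleness of the composite cover, local constancy descending through the base change) are correct and are precisely the points the paper leaves implicit.

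One stated justification is wrong even though its conclusion is right: a klt variety with numerically trivial canonical divisor \emph{can} be uniruled, as the paper's Example~\ref{ex-qabelian} (rationally connected $Q$-abelian varieties with $K \sim_{\QQ} 0$) shows, so ``product of an abelian variety and klt varieties with numerically trivial canonical divisor'' does not by itself rule out uniruledness of $Y'$. The clean fix, already available to you, is that $Y$ is not uniruled because it is the base of an MRC fibration, and $\tau\colon Y' \to Y$ is finite surjective, so a rational curve in $Y'$ would push forward to one in $Y$; alternatively one can invoke that strict Calabi--Yau and irreducible holomorphic symplectic varieties are by construction non-uniruled, but that appeals to more of the structure theory than is needed. (A much smaller quibble of the same flavor: you justify normality of $F$ by saying $B$ is smooth, but $B$ is only an open subset of the singular variety $Y$; normality of $F$ still follows from normality of $B \times F$ since a nontrivial normalization of a factor would induce one of the product.)
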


Moreover, for klt Calabi-Yau pairs, 
we strengthen {\hyperref[thm-main]{Theorem \ref*{thm-main}}} to a splitting theorem (see {\hyperref[cor-main2]{Theorem \ref*{cor-main2}}})
by combining {\hyperref[cor-main1]{Corollary \ref*{cor-main1}}} with 
\cite[Proposition 4.4, Theorem 4.7]{Amb05} and \cite[Lemma 4.6]{Druel18a}. 
Assuming the abundance conjecture, klt Calabi-Yau pairs naturally appear as an outcome of the Log MMP, as log minimal models of klt pairs of Kodaira dimension zero and as general fibers of the Iitaka-Kodaira fibration of log minimal models of positive Kodaira dimension. 
{\hyperref[cor-main2]{Theorem \ref*{cor-main2}}} shows that 
klt Calabi-Yau pairs can be further decomposed into building blocks comprising  rationally connected varieties and Calabi-Yau varieties.

\begin{theo}\label{cor-main2}
Let $(X, \Delta)$ be a projective klt pair with numerically trivial log canonical divisor $K_X+\Delta$. 
Then, there exists a finite quasi-\'etale  cover $\nu\colon X' \to X$ 
such that $(X', \Delta')$ is decomposed into the product 
$$
(X', \Delta') \simeq (F, \Delta'_{F}) \times A \times \prod_{i} Y_{i} \times \prod_{j} Z_{j}, 
$$
where $\Delta':=\nu^\ast\Delta$, $F$ is the fiber of the MRC fibration of $X'$ $($in particular $F$ is rationally connected$)$ with $\Delta'_F:=\Delta'|_F$, $A$ is an abelian  variety, $Y_i$ is a  strictly Calabi-Yau variety, and $Z_i$ is a singular holomorphic symplectic variety 
as in {\hyperref[thm-main]{Theorem \ref*{thm-main}}} and {\hyperref[cor-main1]{Corollary \ref*{cor-main1}}}.
\end{theo}

{\hyperref[cor-main2]{Theorem \ref*{cor-main2}}} reveals the structure of some building blocks in the framework of the Log MMP, although {\hyperref[thm-main]{Theorem \ref*{thm-main}}} itself is not a part of the Log MMP. Note that \hyperref[cor-main2]{Theorem \ref*{cor-main2}} does not hold  for lc pairs in general (see \cite[Example 6.2]{EIM20}).

\subsection{Open problems on fundamental groups and slope rationally connected  quotients}
\label{subsec-1.3}

In this subsection, we discuss several topics 
relating to the uniformization, (topological) fundamental groups, 
and slope rationally connected (sRC)  quotients of klt pairs with nef anti-log canonical divisor. 
This subsection does \textit{not} directly relate to the proof of our main result, 
but illustrates the difficulty and interest in handling klt singularities.

We first review the proof of {\hyperref[thm-main]{Theorem \ref*{thm-main}}} 
in the case where $X$ is a smooth projective variety with nef anti-canonical divisor. 
The proof in this case is divided into the following steps:

\begin{itemize}
\item[Step$ (1)$] (Study of fundamental groups, \cite{Paun97, Paun17}).
The fundamental group $\pi_{1}(X)$ is shown to be virtually abelian 
by the theory of Cheeger-Colding \cite{CC96}.

\item[Step$(2)$] (Study of Albanese maps, \cite{Cao19}).  
The Albanese map of $X$ is shown to satisfy the desired structure theorem. 
Then, together with Step (1),  the problem is reduced to the case of $X$ being simply connected. 

\item[Step$(3)$] (Study of MRC fibrations, \cite{CH19}). 
The desired structure theorem for MRC fibrations is completed 
in the case of $X$ being simply connected (and thus in the general case by Step (2)). 
\end{itemize}
Note that the previous works \cite{Zhang96, Zhang05} played an important role in establishing Steps (2) and  (3). 

After Campana-Cao-Matsumura \cite{CCM19} initiated the study of klt pairs $(X,\Delta)$, 
the second author \cite{Wang20} revealed  that 
the difficulty in adopting the above strategy arises in Step (1); 
precisely, {\hyperref[thm-main]{Theorem \ref*{thm-main}}} as well as the uniformization theorem 
can be deduced if the fundamental group  $\pi_{1}(X_{\reg})$ of the regular locus $X_{\reg}$ has  polynomial growth. 

\begin{conj}[{\cite[Conjecture 1.2]{Wang20}}]
\label{conj2}
Let $(X,\Delta)$ be a projective klt pair with nef anti-log canonical divisor $-(K_X+\Delta)$. 
Then, the fundamental group $\pi_{1}(X_{\reg})$ has  polynomial growth. 
\end{conj}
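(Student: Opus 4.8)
The plan is to transpose Step~$(1)$ of the smooth case---P\u{a}un's argument \cite{Paun97} combining the nefness of $-K_X$ with Cheeger--Colding theory \cite{CC96}---to the klt setting. The essential new difficulty, and the source of all the trouble, is that one must control $\pi_1(X_{\reg})$ rather than $\pi_1(X)$: these agree when $X$ is smooth and $\Delta=0$, but in general $\pi_1(X_{\reg})\twoheadrightarrow\pi_1(X)$ has a large kernel generated by loops around $X_{\sing}\cup\Supp\Delta$. The conjecture would follow from the following metric input: \emph{for every $\varepsilon>0$ there is a complete K\"ahler metric $\omega_\varepsilon$ on $X_{\reg}$ with $\Ric\omega_\varepsilon\ge-\varepsilon\,\omega_\varepsilon$, such that, uniformly in $\varepsilon$, a fixed finite generating set of $\pi_1(X_{\reg})$ is realized by loops of bounded length and the unit ball around a fixed point of the universal cover is non-collapsed, with a lower injectivity-type bound.} That $\pi_1(X_{\reg})$ is finitely generated is automatic, $X_{\reg}$ being a smooth quasi-projective variety.

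Granting this input, I would conclude by Milnor's volume-comparison argument on the universal cover $\widetilde{X_{\reg}}$ equipped with the pulled-back metric: the Bishop--Gromov inequality bounds the volume of the ball of radius $R$ by $V_{-\varepsilon,2n}(R)$ (with $n:=\dim_{\CC}X$), while non-collapsing bounds it below by (the cardinality of the word-ball of radius $\sim R$ in $\pi_1(X_{\reg})$) times a fixed positive constant; letting $\varepsilon\to0$ turns $V_{-\varepsilon,2n}(R)$ into a polynomial in $R$ of degree $2n$, so that $\pi_1(X_{\reg})$ has polynomial growth of degree at most $2n$. Equivalently, one may invoke the structure theory of Cheeger--Colding and Kapovitch--Wilking in the almost-nonnegative Ricci regime.

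The construction of $\omega_\varepsilon$ is where the real work lies. One would pass to a log resolution $\pi\colon\widetilde X\to X$, let $D$ be a reduced simple normal crossing divisor with $X_{\reg}\cong\widetilde X\setminus D$ over the relevant open locus, and use that $-(K_X+\Delta)+\varepsilon A$ is ample for an auxiliary ample divisor $A$ to set up a family of complex Monge--Amp\`ere equations on $\widetilde X\setminus D$ with a prescribed Poincar\'e-type (cusp) behaviour along $D$, so that the solutions are \emph{complete} on $X_{\reg}$ and satisfy $\Ric\omega_\varepsilon\ge-C\varepsilon\,\omega_\varepsilon$ away from $D$. The klt hypothesis is exactly what makes the Monge--Amp\`ere densities $L^p$-integrable, hence amenable to Ko\l odziej--Eyssidieux--Guedj--Zeriahi-type $C^0$-estimates, which one would combine with the non-compact Tian--Yau/Cheng--Yau and Guenancia-type machinery to extract the uniform non-collapsing and injectivity control required above.

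I expect the main obstacle to be precisely this last construction, and it is the reason the statement remains a conjecture. The \emph{canonical} K\"ahler--Einstein geometry of a klt pair is \emph{conical}, not cusp-like, both along the components of $\Delta$ (whose coefficients are $<1$) and at the klt singularities: it places $X_{\sing}\cup\Supp\Delta$ at finite distance, so that the metric completion is homeomorphic to $X$ and its fundamental group is only $\pi_1(X)$---a proper quotient of the group we must control. Forcing $\omega_\varepsilon$ to be complete on $X_{\reg}$, which is what is needed to detect the full group $\pi_1(X_{\reg})$, therefore fights the curvature bound: the nef class $-(K_X+\Delta)$ must ``pay for'' the cusp defect along the entire exceptional and boundary locus while keeping $\Ric\ge-\varepsilon$ and uniform non-collapsing, and it is unclear whether this is always possible. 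Reconciling the incomplete conical geometry with the complete geometry that records loops around the singular locus---quantitatively and uniformly in $\varepsilon$---is the heart of the matter; a purely algebro-geometric substitute, controlling the quasi-\'etale (i.e.\ \'etale-in-codimension-one) covers of $X$ via Braun-type finiteness, would sidestep the analysis but does not seem to yield the quantitative word-growth bound.
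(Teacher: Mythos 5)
This statement is an open conjecture (\hyperref[conj2]{Conjecture \ref*{conj2}}, quoted from \cite{Wang20}), not a theorem of the paper: the authors explicitly write that it ``seems quite difficult to solve \hyperref[conj2]{Conjecture \ref*{conj2}} in general,'' that it is unclear whether P\u{a}un's argument even extends to klt pairs with smooth underlying variety, and the entire point of {\hyperref[sec-flat]{Sections \ref*{sec-flat}}}--{\hyperref[sec-MRC]{\ref*{sec-MRC}}} is a new route to {\hyperref[thm-main]{Theorem \ref*{thm-main}}} that bypasses any control of $\pi_1(X_{\reg})$. So there is no proof in the paper to compare against, and you are right not to have produced one.

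What you did write is not a proof but a diagnosis of why the conjecture is hard, and it is a sound one: the tension between (i) wanting a complete almost-nonnegatively-Ricci-curved metric on $X_{\reg}$ so that Milnor/Bishop--Gromov volume comparison on $\widetilde{X_{\reg}}$ sees the whole group $\pi_1(X_{\reg})$, versus (ii) the fact that the canonical K\"ahler geometry attached to a klt pair is conical and places $X_{\sing}\cup\Supp\Delta$ at finite distance, so its completion computes only the quotient $\pi_1(X)$, is exactly the obstruction. Your remark at the end that a quasi-\'etale / Braun-type finiteness argument ``sidesteps the analysis but does not seem to yield the quantitative word-growth bound'' is also essentially what motivates the paper's strategy: the authors replace the growth estimate on $\pi_1(X_{\reg})$ with the termination argument of \cite[Theorem~1.1]{GKP16b} applied to a tower of quasi-\'etale Galois covers (see {\hyperref[subsec-nonalg]{Subsection~\ref*{subsec-nonalg}}}), which gives the structure theorem but indeed no polynomial-growth statement. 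One small correction of emphasis: in the Milnor argument what one needs is a \emph{uniform} lower volume bound for unit balls in the universal cover together with the Ricci lower bound $\ge -\varepsilon$; the ``injectivity-type bound'' phrasing should be replaced by non-collapsing of the unit ball at a basepoint, since injectivity radius is neither needed nor likely controllable near the cusps. With that adjustment your obstruction analysis matches the paper's own assessment of the problem.
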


{\hyperref[conj2]{Conjecture \ref*{conj2}}} is partially solved 
for the orbifold fundamental group $\pi_{1}^{{\rm{orb}}}(X, \Delta)$ of an orbifold pair $(X, \Delta)$,
but it seems quite difficult to solve {\hyperref[conj2]{Conjecture \ref*{conj2}}} in the general case. 
In fact, it is even unclear whether the argument in \cite{Paun97, Paun17} works for klt pairs $(X, \Delta)$ with smooth $X$. 
Since {\hyperref[conj2]{Conjecture \ref*{conj2}}} for smooth  projective varieties  is a starting point of \cite{CH19} as explained above, 
we need to take another strategy for the proof of {\hyperref[thm-main]{Theorem \ref*{thm-main}}}. 
Our strategy  displayed in this paper 
is independent of the results on fundamental groups and is new even in the smooth case. 

{\hyperref[thm-main]{Theorem \ref*{thm-main}}} can be applied 
to prove the uniformization theorem of klt pairs $(X, \Delta)$ with nef anti-log canonical divisor. 
Indeed, {\hyperref[thm-main]{Theorem \ref*{thm-main}}}  reduces the uniformization problem for $(X, \Delta)$ to the following conjecture, 
which is formulated only for the fundamental group of Calabi-Yau varieties (and not for the regular locus),  
and thus more straightforward than {\hyperref[conj2]{Conjecture \ref*{conj2}}}.

\begin{conj}
\label{coj-main}
Let $Y$ be a projective klt variety with numerically trivial canonical divisor.
If $Y$ has vanishing augmented irregularity, 
then $\pi_{1}(Y)$ is finite. 
\end{conj}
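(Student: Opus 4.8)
We outline a strategy toward {\hyperref[coj-main]{Conjecture \ref*{coj-main}}} that reduces it, by means of the singular Beauville-Bogomolov decomposition \cite{GKP16a, Druel18a, GGK19, HP19, CP19, Cam21}, to a single remaining case, and then identifies what seems to be the genuine obstruction. First I would invoke the decomposition: after passing to a suitable connected finite quasi-\'etale cover $\nu\colon Y'\to Y$ one obtains an isomorphism $Y'\simeq A\times\prod_{i}Y_{i}\times\prod_{j}Z_{j}$, with $A$ an abelian variety, the $Y_{i}$ strict Calabi-Yau varieties, and the $Z_{j}$ (singular) irreducible holomorphic symplectic varieties. Since a composition of finite quasi-\'etale morphisms is again finite quasi-\'etale, the hypothesis that the augmented irregularity $\widetilde{q}(Y)$ vanishes forces $\widetilde{q}(Y')=0$; as $q$ is additive under products and vanishes on strict Calabi-Yau and on irreducible holomorphic symplectic varieties, this yields $q(Y')=\dim A$, hence $A$ is a point and $Y'\simeq\prod_{i}Y_{i}\times\prod_{j}Z_{j}$.

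Next I would reduce to the individual factors. Because $\nu$ is finite quasi-\'etale, the image of $\pi_{1}(Y')\to\pi_{1}(Y)$ has finite index: restricting $\nu$ to regular loci gives, by purity of the branch locus, a connected finite \'etale cover $\nu^{-1}(Y_{\reg})\to Y_{\reg}$ of degree $\deg\nu$, so the inclusion $\pi_{1}(\nu^{-1}(Y_{\reg}))\hookrightarrow\pi_{1}(Y_{\reg})$ has index $\deg\nu$; combining this with the surjections $\pi_{1}(\nu^{-1}(Y_{\reg}))\twoheadrightarrow\pi_{1}(Y')$ and $\pi_{1}(Y_{\reg})\twoheadrightarrow\pi_{1}(Y)$ induced by the inclusions of dense Zariski-open subsets shows that the image of $\pi_{1}(Y')\to\pi_{1}(Y)$ has index dividing $\deg\nu$. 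Hence it suffices to prove that $\pi_{1}(Y')$ is finite, and since the fundamental group of a product of finitely many compact CW complexes is the product of their fundamental groups, it suffices to treat each factor $Y_{i}$ and $Z_{j}$ separately. For the symplectic factors this is known: $\pi_{1}(Z_{j})$ is finite by the structure theory of (singular) irreducible holomorphic symplectic varieties. Therefore the conjecture reduces entirely to the statement that a strict Calabi-Yau variety has finite fundamental group.

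For a strict Calabi-Yau variety $Y_{i}$ one already controls the abelianizations of all finite-index subgroups of $\pi_{1}(Y_{i})$: each such subgroup is the fundamental group of a finite \'etale (hence quasi-\'etale) cover $W\to Y_{i}$, which by Riemann's existence theorem is again a normal projective variety, klt with $K_{W}\equiv 0$; then $\widetilde{q}(Y_{i})=0$ forces $q(W)=0$, so $b_{1}(W)=2q(W)=0$ and $H_{1}(W,\ZZ)=\pi_{1}(W)^{\mathrm{ab}}$ is finite. The remaining difficulty, which I expect to be the main obstacle, is to upgrade ``every finite-index subgroup has finite abelianization'' to finiteness of $\pi_{1}(Y_{i})$ itself: this property alone is shared, for instance, by infinite finitely generated simple groups, so one genuinely needs extra structure on $\pi_{1}(Y_{i})$ --- e.g.\ linearity or residual finiteness together with a Tits-alternative argument, or analytic input such as a singular Ricci-flat K\"ahler metric on $Y_{i}$ and a Cheeger-Colding-type analysis of its universal cover, in the spirit of the smooth case \cite{Paun97}. (Passing to a resolution does not help: a resolution of $Y_{i}$ induces an isomorphism on fundamental groups, since klt singularities are rational, but it is no longer Calabi-Yau.) No such structural result is presently available for fundamental groups of singular Calabi-Yau varieties --- a difficulty closely related to {\hyperref[conj2]{Conjecture \ref*{conj2}}} --- so the proposal is to carry out the reduction above unconditionally and to regard the strict Calabi-Yau case as the hard core of the conjecture.
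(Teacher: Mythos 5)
The statement you are addressing is labelled a \emph{conjecture} in the paper, and the paper offers no proof of it: it is proposed precisely as the open problem to which {\hyperref[thm-main]{Theorem \ref*{thm-main}}} reduces the uniformization question, and the authors describe it only as ``more straightforward than {\hyperref[conj2]{Conjecture \ref*{conj2}}}'' because it concerns $\pi_1(Y)$ rather than $\pi_1(Y_{\reg})$. There is thus no proof in the paper against which to compare.

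Your proposal is not a proof either, and you are candid about that; judged as a reduction it is sound and, I believe, is exactly the reduction the authors have in mind. Passing to the singular Beauville--Bogomolov cover, killing the abelian factor using vanishing augmented irregularity, and descending finiteness of $\pi_1$ along a finite quasi-\'etale cover via $\pi_1(Y_{\reg})\twoheadrightarrow\pi_1(Y)$ and purity of branch locus are all correct. Two remarks. First, for the symplectic factors $Z_j$ you should be a little more careful about which definition of ``singular irreducible holomorphic symplectic variety'' is in force: in the decomposition results cited (\cite{GKP16a,Druel18a,GGK19,HP19,CP19,Cam21}) the IHS factors are typically characterized by a condition on reflexive holomorphic forms on all quasi-\'etale covers, which together with $\widetilde q=0$ gives finiteness (or triviality) of $\pi_1$, but this is a definitional/structural input rather than an automatic fact and is worth citing explicitly. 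Second, and this is the genuine gap you yourself flag: for a strict Calabi--Yau factor $Y_i$ the step from ``every finite-index subgroup has finite abelianization'' (which your $b_1(W)=0$ argument does give) to finiteness of $\pi_1(Y_i)$ is precisely what is missing, and it is missing in the literature as well. In the smooth case one closes this gap with Cheeger--Gromoll/Cheeger--Colding applied to a Ricci-flat metric on the universal cover; in the klt setting the singular Ricci-flat metric lives only on $Y_{i,\reg}$, and the fundamental group that the metric controls is $\pi_1(Y_{i,\reg})$ rather than $\pi_1(Y_i)$, which is exactly the difficulty behind {\hyperref[conj2]{Conjecture \ref*{conj2}}}. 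So your diagnosis of the hard core is correct, and your reduction is consistent with what the paper leaves open; just be explicit that you have reduced, not proved.
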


{\hyperref[thm-main]{Theorem \ref*{thm-main}}}  opens the study of sRC quotients.
The MRC fibration of $X$ has the drawback of not containing any information of the boundary divisor $\Delta$, 
whereas the sRC quotient of $(X, \Delta)$, introduced in \cite{Cam16}, is a natural generalization of the MRC fibration 
that considers  the boundary divisor $\Delta$.
It would be an attractive problem to establish a structure theorem of  sRC quotients by comparing MRC fibrations of $X$ to sRC quotients of $(X, \Delta)$. 
Based on \cite[Conjecture 1.5]{CCM19}, we pose the following conjecture:
\begin{conj}
Let $(X,\Delta)$ be a klt pair with nef anti-log canonical divisor $-(K_X+\Delta)$. 
Then, up to a finite quasi-\'etale cover, the sRC quotient $(X, \Delta) \to (Z, \Delta_{Z})$ is a locally constant fibration 
with numerically trivial $K_{Z}+\Delta_{Z}$. 
Moreover, the MRC fibration $ X \to Y$ in {\hyperref[thm-main]{Theorem \ref*{thm-main}}} factorizes through $(X, \Delta) \to (Z, \Delta_{Z})$. 
\end{conj}

\subsection*{Organization of the paper}
This paper is organized as follows:  
In {\hyperref[sec-pre]{Section \ref*{sec-pre}}}, we recall some definitions and preliminary results,  
especially the definitions of {\it MRC fibrations}, {\it locally constant fibrations}, {\it singular Hermitian metrics}, and {\it finite quasi-\'etale covers}.  
Unlike the previous works for smooth projective varieties, 
we need quasi-\'etale covers in studying klt pairs for {\hyperref[thm-main]{Theorem \ref*{thm-main}}}
(see {\hyperref[rem-ex]{Remark \ref{rem-ex}}}). 
This requires non-trivial and technical discussions in Sections \ref{sec-flat} and \ref{sec-MRC}. 
In {\hyperref[sec-flat]{Section \ref*{sec-flat}}}, 
we prove a technical result (see {\hyperref[thm-flat]{Theorem \ref*{thm-flat}}}), 
which asserts that, up to replacing $X$ with a quasi-\'etale cover and a $\QQ$-factorialization, 
appropriately defined `direct image sheaves $\psi_*(mL)$' are flat for a positive integer $m \in \mathbb{Z}_{>0}$, 
where $\psi \colon X \dashrightarrow Y$ is an MRC fibration of $X$ and 
$L$ is a $\psi$-ample line bundle on $X$. 
The proof of this flatness clarifies 
the reason why we need a finite quasi-\'etale cover of $X$ in {\hyperref[thm-main]{Theorem \ref*{thm-main}}}. 
In {\hyperref[sec-MRC]{Section \ref*{sec-MRC}}} we deduce the desired structure for $(X,\Delta)$. By combining the flatness result above with the results of (singular) foliations established by Stéphane Druel, we can prove the structure theorem in a birational sense, and the main difficulty and the major contribution of this section is to deduce the main theorem from such a birational structure result.  
Let us remark that in the discussion in {\hyperref[sec-MRC]{Section \ref*{sec-MRC}}}, 
the formulation of \hyperref[thm-main]{Theorem \ref*{thm-main}} using locally constant fibrations  plays a crucial role. 
This viewpoint is important not only for the proof but also for applications of the theorem.

\subsection*{Acknowledgments} 
The authors wish to express their thanks to Professors 
S\'ebastien Boucksom, 
Fr{\'e}d{\'e}ric Campana, 
St\'ephane Druel, 
Andreas H\"oring, 
and J\'anos Koll\'ar 
for helpful comments and suggestions. 
The authors are deeply indebted to an anonymous referee for pointing out {\hyperref[thm-mqe]{Theorem \ref{thm-mqe}}}, 
which simplified the discussion in {\hyperref[sec-flat]{Section \ref*{sec-flat}}} of the draft version and 
clarified the essence of the proof.
 The first author  is grateful to 
Professor  Yoshinori Gongyo for a discussion on quasi-abelian varieties and 
for answering questions, 
and to Professors Shigefumi Mori, Yuji Odaka, and Ken-ichi Yoshikawa 
for providing him with an opportunity to talk at the Algebraic Geometry Seminar at Kyoto University, 
during which one question drew his attention to the case of singular varieties. 
He was supported 
by Grant-in-Aid for Young Scientists (A) $\sharp$17H04821, 
Grant-in-Aid for Scientific Research (B) $\sharp$ 21H00976, 
and Fostering Joint International Research (A) $\sharp$19KK0342 from JSPS. 
The second author is grateful to Professors St\'ephane Druel and Andreas H\"oring for kindly answering his questions on foliations. 
He was partially supported by the French ANR Research Project ``GRACK'' and the National Natural Science Foundation of China (Grant No. 12288201), and is partially supported by the National Key R\&D Program of China (Grant No. 2021YFA1003100).

\section{Preliminary results}\label{sec-pre}

\subsection{Notation and conventions}\label{subsec-notation}

Throughout this paper, 
all the  varieties and morphisms are defined over the complex number field, 
and analytic varieties denote irreducible and reduced complex analytic spaces. 
We interchangeably use the terms 
`locally free sheaves' and `vector bundles,'  
and further  the terms `Cartier divisors,' 
`invertible sheaves,'
and `line bundles.'
For example, a Cartier divisor $D$ is  regarded as a line bundle, 
and Hermitian metrics on $D$ denote those on the line bundle associated with $D$
(where this convention differs from \cite{Wang20}).
We often use an additional notation for the tensor products of invertible sheaves 
$L$ and $M$ (e.g.,\,$-L=L^*$ and $L+M=L \otimes M$).

\subsection{MRC fibrations}
We recall the definitions and properties of MRC fibrations and rationally (chain) connected varieties. 

\begin{defi}\label{def-mrc}
Let $X$ and $Y$ be normal projective varieties. 
A dominant rational map $\psi \colon X \dashrightarrow Y$ is called an \textit{RC fibration} 
if it satisfies the following conditions: 
\begin{itemize}
\item[$\bullet$] $\psi\colon X \dashrightarrow Y$ is almost holomorphic 
(i.e.,\,the indeterminacy locus is not dominant over $Y$, and a general fiber is connected). 
\item[$\bullet$] A general fiber is rationally connected. 
\end{itemize}

\end{defi}

There exist maximal rationally chain connected (MRCC) fibrations for normal projective varieties by \cite{Cam92, KoMM92a}. 
Rationally chain connected varieties are not necessarily rationally connected, 
but the two notions coincide for potentially dlt projective varieties by \cite[Corollary 1.5]{HM07}. 
Hence, for a projective klt pair $(X, \Delta)$, the variety $X$ always admits an MRC fibration 
(i.e.,\,an RC fibration $\psi\colon X \dashrightarrow Y$ such that a very general rational curve in $X$ is contained in 
the fiber $X_{y}:=\psi^{-1}(y)$ at some point $y \in Y$.   
This definition of MRC fibrations can be rephrased as the condition that 
$K_{Y}$ is pseudo-effective when $Y$ is smooth by \cite{BDPP13, GHS03}. 

Rationally connected varieties with mild singularities are simply connected by \cite[Theorem 1.1]{Tak03} 
and have no $($non-trivial$)$  holomorphic differential forms. 
These properties are used in the proof.

\begin{theo}\label{thm-funda}
Let $X$ be a rationally connected and potentially klt projective variety. 
Then $X$ is simply connected and $H^{0}(X, \Omega_{X}^{p})=0$ for any $p>0$. 
\end{theo}

\subsection{Locally constant fibrations}
In this subsection, after we give the definition of locally constant fibrations, 
we explain the properties of the locally constant fibrations 
(e.g.,\,{\hyperref[lemma_local-const-lb-decomp]{Lemma \ref{lemma_local-const-lb-decomp}}}) and 
a criterion for a fiber space to be locally constant using the flatness of certain direct image sheaves 
(see {\hyperref[prop_flat-lcf]{Proposition \ref{prop_flat-lcf}}}).

\begin{defi}\label{def-constant}
Let $\phi\colon X \to Y$ be a fiber space between normal analytic varieties 
(i.e.,\,a proper surjective  morphism with connected fibers), 
and let $\Delta$ be a Weil $\QQ$-divisor on $X$. 

\smallskip
\noindent 
(1)
$\phi \colon X \to Y$ is called a {\textit{locally constant fibration with respect to the pair $(X, \Delta)$}}
if it satisfies the following conditions: 
\begin{itemize}
\item[$\bullet$] $\phi \colon X \to Y$ is an analytic fiber  bundle with the fiber $F$. 
\item[$\bullet$] Every component $\Delta_{i}$ of $\Delta$ is horizontal $($i.e.,\,$\phi(\Delta_{i})=Y$$)$. 
\item[$\bullet$]

There exists a Weil $\QQ$-divisor $\Delta_{F}$ on $F$ and 
a representation 
$
\rho \colon \pi_1(Y)\to\Aut(F)
$
of the fundamental group $\pi_1(Y)$ to 
the automorphism group $\Aut(F)$   such that 
\begin{itemize}
\item[$\bullet$] $\Delta_{F}$ is invariant under the action of $\pi_1(Y)$; 
\item[$\bullet$] $(X, \Delta) $ is isomorphic to the quotient $(\Unv{Y}\times F, \pr_2^\ast\Delta_{F})/\pi_1(Y)$ over $Y$. 
Here $\Unv{Y}$ is the universal cover of $Y$ 
and $\gamma \in \pi_1(Y)$ acts on $\Unv{Y}\times F$ 
by the diagonal action  
$$
\gamma\cdot(y,z):=(\gamma\cdot y, \rho(\gamma)(z)) \text{ for } (y,z) \in \Unv{Y}\times F. 
$$
\end{itemize}
\end{itemize}

\smallskip
\noindent 
(2) $\phi \colon X \to Y$ is simply said to be {\textit{locally constant}} 
if it satisfies the above conditions for $\Delta=0$.

\end{defi}

Note that, when $\phi \colon X \to Y$ is a locally constant fibration, 
the fiber product $X\times_Y\Unv Y$ is isomorphic to the product $\Unv Y\times F$ 
and the induced morphism $X\times_Y\Unv Y\to \Unv Y$ is equal to 
the first projection $\Unv Y\times F \to \Unv Y$ under this identification. 
We thus have the following diagram: 
\begin{equation}\label{locally-constant}
\begin{gathered}\xymatrix@C=40pt@R=30pt{
X  \ar[d]_{\phi} &  X\times_Y\Unv Y \cong \Unv{Y} \times F \ar[d]_{\pr_{1}} \ar[l]^{p_{X} \qquad  \qquad}  
\ar[r]^{\qquad \qquad \pr_{2}} & F  \\ 
Y   & \ar[l]^{p_{Y}} \Unv{Y}.  &  \\   
}\end{gathered}
\end{equation}

A locally constant fibration $\phi \colon X \to Y$ with respect to a pair $(X, \Delta)$ 
is locally trivial, i.e.,\,it is an analytic fiber  bundle with  fiber $F$, 
and there exists a Weil $\QQ$-divisor $\Delta_{F}$ on $F$ such that 
$$(\phi^{-1} (B), \Delta)\simeq  B \times (F, \Delta_{F})$$
over a sufficiently small open set $B \subset Y$. 
However, the converse implication does not  hold in general; 
e.g.,\,the projective space bundle $\mathbb{P} (E) \to Y$ of a locally free sheaf $E$ on $Y$ 
is always locally trivial, but not necessarily locally constant.

The projective space bundle  $\mathbb{P} (E) \to Y$ is a locally constant fibration  if $E$ is flat. 
Here, we recall that a vector bundle $E$ on a smooth variety $Y$ is said to be \textit{flat} if 
$E$ admits a flat connection or, equivalently, if $E$ arises from a representation $\pi_1(Y)\to\GL_r(\CC)$, 
where $r:=\rank E$ 
(see \cite[\S 1.2, pp.\,4-6]{Kob87}). 
Moreover, the category of flat vector bundles is equivalent to that of local systems (i.e.,\,locally constant sheaves) 
via the Riemann-Hilbert correspondence. Hence, we interchangeably use them  in the sequel.

The previous works \cite{Cao19, CH19} formulated structure theorems using locally trivial fibrations. 
Nevertheless, it is important to formulate 
\hyperref[thm-main]{Theorem \ref*{thm-main}} with locally constant fibrations  for several reasons; 
e.g.,\,a locally constant fibration induces a splitting of tangent sheaves   
(see \cite[Remark 2.2]{Wang20} and the proof of {\hyperref[thm_splitting]{Theorem \ref*{thm_splitting}}}); 
moreover, our proof needs the following lemma (see {\hyperref[ss_MRC_klt]{Subsection \ref*{ss_MRC_klt}}}), 
which does not hold for locally trivial fibrations in general.

\begin{lemm}
\label{lemma_local-const-lb-decomp}
Let $\phi \colon X\to Y$ be a projective locally constant fibration 
such that the fiber  $F$ has vanishing irregularity. 
Let $L$ be a line bundle  $X$. 
Then, there exist a line bundle $L_{F}$ on $F$ and 
a $\mathbb{Q}$-line bundle $($$\mathbb{Q}$-Cartier divisor$)$ $L_{Y}$ on $Y$  such that 
the pullback of $p_{X}^{*}(L  -\phi^* L_{Y}) \sim_{\QQ}\pr_{2}^* L_{F}$ 
and  the direct image sheaf $\phi_*(m(L-\phi^* L_{Y})) $ is a flat vector bundle on $Y$ 
$($or  the zero sheaf$)$ for any sufficiently divisible integer $m \in \mathbb{Z}_{>0}$.  
\end{lemm}
\begin{proof}
We use the notations in the diagram \eqref{locally-constant}. 
Let $\rho \colon \pi_1(Y)\to \Aut(F)$ be a representation  such that $X$ is the quotient of 
$\Unv Y\times F$ by the action of $\pi_1(Y)$. 
By assumption, the fiber $F$ is a projective variety with vanishing irregularity. 
Hence we have the decomposition of the Picard group 
$\Pic(\Unv{Y}\times F)\simeq\Pic(\Unv{Y})\times\Pic(F)$ 
by the analytic version of \cite[\S I\!I\!I.12, Exercise 12.6, p.\,292]{Har77}. 
In particular, there exists a line bundle $L_{\Unv{Y}}$ on $\Unv{Y}$ and a line bundle $L_F$ on $F$ such that 
$p_X^\ast L\simeq \pr_1^\ast\!L_{\Unv{Y}} +\pr_2^\ast\!L_F$. 
Note that $L_F$ is a $\rho$-equivariant line bundle, but not necessarily $\rho$-linearizable. 

By applying the above argument to a $\phi$-ample line bundle, 
we can find a $\rho$-equivariant ample line bundle on $F$. 
Thus, the Zariski closure $G$ of  $\Image (\rho)$ is a linear algebraic group. 
By \cite[Proposition 2.4]{KKLV89}, 
we can take an integer $m_{0} \in \mathbb{Z}_{>0}$ 
such that $mL_{F}$ is $G$-linearizable 
for any integer $m \in \mathbb{Z}_{>0}$ divisible by $m_{0}$. 
(Precisely speaking, the cited result \cite[Proposition 2.4]{KKLV89} assumes that $G$ is connected, 
but this assumption can be removed since $G$ has only finitely many connected components.) 
The line bundle $L_{0}$ defined by $L_{0}:=(\Unv{Y} \times m_{0}L_{F})/\pi_{1}(Y)$ 
satisfies that $p_{X}^* L_{0} \simeq \pr_{2}^{*} (m_{0}L_{F})$ and $(m_{0}L-L_{0})|_F \simeq\mathcal{O}_{F}$. 
Hence, we can find a line bundle $L'_{Y}$ on $Y$ such that $m_{0}L-L_{0}\sim \phi^{*}L'_{Y}$. 

We finally show that the line bundle $L_{F}$ and 
the $\mathbb{Q}$-line bundle $L_{Y}:=(1/m_{0})L_{Y}'$ satisfy the desired properties.
By construction, we can easily find that $p_{X}^{*}(L  -\phi^* L_{Y}) \sim_{\QQ} \pr_{2}^{*}L_{F}$.  
Furthermore, for any $m \in \mathbb{Z}$ divisible by $m_{0}$, we can see that 
$$  
{\pr_{1}}_{*}\big({p_{X}}^* (m(L -  \phi^* L_{Y}))\big)= {\pr_{1}}_* ({\pr_{2}}^* (mL_{F})) = \mathcal{O}_{\Unv Y} {\otimes}_\CC H^0\big(F, mL_{F}\big).
$$
Hence, by the flat base change theorem, we obtain  
\begin{equation}\label{m-flat}
p_{Y}^*\big( \phi_* (m(L - \phi^* L_{Y}) )  \big) \cong \mathcal{O}_{\Unv Y}{\otimes}_\CC H^0\big(F, mL_{F}\big). 
\end{equation}
The representation $\rho \colon \pi_1(Y)\to \Aut(F)$ linearly acts on $H^0\big(F, mL_{F}\big)$, 
and the quotient by this action coincides with $\phi_* (m(L - \phi^* L_{Y}))$. 
This implies that $\phi_*(m(L-\phi^* L_{Y})) $ is a flat vector bundle on $Y$. 
\end{proof}

For a locally constant fibration  $X \to Y$, 
 every line bundle on $X$ can be modified 
(up to the tensoring the pullback of a line bundle on $Y$) so that its direct image is a flat vector bundle 
by {\hyperref[lemma_local-const-lb-decomp]{Lemma \ref*{lemma_local-const-lb-decomp}}}.
Conversely, if there exists a relatively very  ample line bundle on $X$ whose direct image sheaf is flat, 
then $X \to Y$ is necessarily a locally constant fibration by the following proposition: 

\begin{prop}
\label{prop_flat-lcf}
Let $h \colon V\to W$ be a fiber space from a normal analytic variety $V$ to a complex manifold  $W$,   
and let $D$ be an effective Weil $\QQ$-divisor on $V$. 
Assume that $h$ is a flat projective morphism, and assume that there is an $h$-relatively very ample line bundle $L$ on $V$ such that 
\begin{itemize}
\item[\rm(1)] $E_m:=h_\ast(mL)$ is a local system for every $m\in\ZZ_{>0}$$;$ 
\item[\rm(2)] for every $m\in\ZZ_{>0}$, the natural morphism $\Sym^m\!E_1\to E_m$ is a morphism between local systems $($i.e.,\,compatible with the flat connections$)$$;$
\item[\rm(3)] for some $k\in\ZZ_{>0}$ rendering $kD$ a $\ZZ$-divisor, $F_m:=h_\ast(mL-kD)$ is a sub-local system of $E_m$ for every $m\in\ZZ_{>0}$.
\end{itemize}
Then $h \colon V\to W$  is a locally constant fibration with respect to $(V,D)$.    
\end{prop}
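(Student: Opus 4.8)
The plan is to reconstruct the data of a locally constant fibration — a universal cover pullback that splits as a product, together with a monodromy representation into $\Aut(F)$ — directly from the relative projective embedding furnished by $L$. First I would pass to the universal cover $p_W:\Unv W\to W$ and set $\Unv V:=V\times_W\Unv W$ with induced map $\tilde h:\Unv V\to\Unv W$; by flat base change, $\tilde h_\ast(m\tilde L)\simeq p_W^\ast E_m$, which becomes the \emph{trivial} local system $\OX_{\Unv W}\otimes_\CC H_m$ on the simply connected $\Unv W$, where $H_m:=H^0(F_0,mL_0)$ for $F_0$ a reference fiber. Condition (2) says the multiplication maps $\Sym^m H_1\to H_m$ assemble these vector spaces into the graded ring $R:=\bigoplus_m H_m$, which (since $L$ is $h$-ample) is the homogeneous coordinate ring of the fiber; hence $\Proj R$ is a single projective scheme $F$, and the relative $\Proj$ over $\Unv W$ of $\bigoplus_m\tilde h_\ast(m\tilde L)\simeq\OX_{\Unv W}\otimes_\CC R$ is exactly the constant family $\Unv W\times F$. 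The $h$-ampleness of $L$ gives a closed embedding $\Unv V\hookrightarrow\PP(\bigoplus\tilde h_\ast(m\tilde L))$ identifying $\Unv V$ with this $\Proj$, so $\Unv V\simeq\Unv W\times F$ over $\Unv W$; condition (3), via the sub-local-system $p_W^\ast F_m\subset p_W^\ast E_m$ being trivial with constant fiber $\bigoplus_m H^0(F_0,mL_0-kD_0)$, cuts out on $\Unv W\times F$ exactly the constant divisor $\pr_2^\ast D_F$ for a suitable effective Weil $\QQ$-divisor $D_F$ on $F$ (one recovers $kD_F$ as the base locus / vanishing scheme of the sections in $F_\bullet$ inside $F$, using $h$-ampleness again to see it is divisorial and that $\Unv D=\pr_2^\ast(kD_F)$).

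Next I would produce the monodromy. The deck group $\pi_1(W)$ acts on $\Unv V$ covering its action on $\Unv W$, and because the flat connections on the $E_m$ are $\pi_1(W)$-equivariant by definition of "local system", the induced action on $\bigoplus_m\tilde h_\ast(m\tilde L)\simeq\OX_{\Unv W}\otimes_\CC R$ is through a linear action on the ring $R$ preserving the grading and the multiplication — i.e. a representation $\pi_1(W)\to\Aut(R)$ of graded ring automorphisms, equivalently (by $\Proj$) a representation $\rho:\pi_1(W)\to\Aut(F)$. Tracing through the identification $\Unv V\simeq\Unv W\times F$, the $\pi_1(W)$-action becomes the diagonal one $\gamma\cdot(y,z)=(\gamma\cdot y,\rho(\gamma)z)$, and since $F_\bullet$ is a sub-\emph{local system} (condition (3)) the subspace $\bigoplus_m H^0(F_0,mL_0-kD_0)\subset R$ is $\pi_1(W)$-stable, so $D_F$ is $\rho$-invariant. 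Taking the quotient, $(V,D)\simeq(\Unv W\times F,\pr_2^\ast D_F)/\pi_1(W)$ over $W$, which is precisely the definition of a locally constant fibration with respect to $(V,D)$ from \hyperref[def-constant]{Definition \ref*{def-constant}}; in particular $h$ is an analytic fiber bundle and every component of $D$ is horizontal because $D_F$ lives on $F$.

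The main obstacle I expect is the third bullet — controlling the boundary divisor $D$ rather than just the fiber bundle structure. Showing $\Unv V\simeq\Unv W\times F$ as varieties is essentially relative $\Proj$ of a trivial sheaf of graded rings and is robust; but $D$ is only a Weil $\QQ$-divisor on the possibly singular $V$, so one must argue that the closed subscheme of $\Unv W\times F$ cut out fiberwise by the sections of $F_m$ for all $m$ is genuinely the pullback of a \emph{divisor} $\pr_2^\ast(kD_F)$ and not something of higher codimension or with the wrong multiplicities. Here the key inputs are that $L$ is $h$-relatively ample (so $mL-kD$ globally generates enough for large $m$ to recover $kD$ scheme-theoretically as an intersection of divisors in the linear systems $|mL|_F$) and that flatness of $h$ with $F_m\subset E_m$ a \emph{sub}bundle of constant rank forces the fiberwise linear systems to have constant dimension, ruling out jumping; combined with normality of $V$ this pins down $D=\Unv D/\pi_1(W)$ with $\Unv D=\pr_2^\ast(kD_F)/k$. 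Once the divisorial claim is secured, equivariance of $D_F$ and the quotient description follow formally from the local-system hypotheses.
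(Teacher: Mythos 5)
Your proposal is correct and follows essentially the same route as the paper's proof: pass to the universal cover, trivialize the local systems $E_m$ via parallel frames, use condition (2) to show the equations cutting out $V$ inside the (constant) projective bundle have constant coefficients, and descend the resulting product structure $\Unv W\times F$ along a monodromy representation $\rho:\pi_1(W)\to\Aut(F)$, with condition (3) handling $D$. The only cosmetic difference is that you phrase the splitting via the relative $\Proj$ of the trivialized graded algebra $\bigoplus_m p_W^\ast E_m$, whereas the paper works with the ideal sheaf $S_m=\ker(\Sym^m E_1\to E_m)$ inside $\PP(E_1)$ — these are two formulations of the same argument.
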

\begin{proof}
By assumption, the line bundle $L$ induces a relative embedding $i \colon V\hookrightarrow\PP (E_1)$ over $W$ 
such that $i^\ast \OX_{\PP (E_1)}(1)=L$.
Let $p \colon \Unv W\to W$ be the universal cover of $W$. 
Then $p^\ast\!E_1$ is a trivial vector bundle, 
and there are $(r+1)$ global sections $e_0, e_1, \ldots, e_r\in\Coh^0(\Unv W, p^\ast\!E_1)$ that are parallel with respect to (the pullback of) the flat connection $\nabla_{\!E_1}$, where $r+1:=\rank\!E_1$.

Let $\calI_V$  be the ideal sheaf of $V$ in $\PP (E_1)$. 
By the relative Serre vanishing, for a sufficiently large $m$, we have the short exact sequence
%\[
%0\to S_{m}:=g_\ast(\calI_V\otimes\OX_{\PP(E_1)}(m))\to g_\ast(\OX_{\PP(E_1)}(m))=\Sym^m\!E_1\to E_m:=h_\ast(mL)\to 0, 
%\]
\begin{align*}
0\to S_{m}:=g_\ast(\calI_V\otimes\OX_{\PP(E_1)}(m))\to &g_\ast(\OX_{\PP(E_1)}(m)) \\
=&\Sym^m\!E_1\to E_m:=h_\ast(mL)\to 0, 
\end{align*}
where $g$ denotes the natural morphism $\PP (E_1)\to W$. 
Since $\calI_V$ is flat over $W$ (by the flatness of $h$), 
the sheaf $S_m=g_\ast(\calI_V\otimes\OX_{\PP( E_1)}(m))$ is a vector bundle for a sufficiently large $m$  
by \cite[Proposition (3.3), p.\,13, Vol.I\!I]{ACG11}. 
The morphism $g_\ast(\OX_{\PP (E_1)}(m))\to E_m$ is a morphism of local systems by condition (2). 
This indicates that $S_m$ is also a local system and 
the inclusion $S_m\to\Sym^m\!E_1$ is also a morphism of local systems 
since the category of local systems is abelian. 
Hence, we get  global sections $s_1,\ldots,s_{t_m}\in\Coh^0(\Unv W, p^\ast S_m)$ that are parallel with respect to the flat connection $\nabla_{S_m}$, where $t_m:=\rank S_m$. 
Every $s_i \in\Coh^0(\Unv W, p^\ast S_m)$, regarded as a section of $p^\ast\Sym^m\!E_1$, 
is parallel with the pullback of $\nabla_{\Sym^m\!E_1}$.  Hence, we can write
\[
s_i=\sum_{\substack{\alpha=(\alpha_0,\cdots,\alpha_r)\in\ZZ_{\geqslant0}^{r+1} \\ |\alpha|=m}}c_{i,\alpha}\cdot e_0^{\alpha_0}\cdots e_r^{\alpha_r}\,
\]
for some constants $c_{i,\alpha}\in\CC$. 
This implies that the relative embedding $\tilde V:=V\times_W\Unv W$ into $\PP^r\times\Unv W$ over $\Unv W$ is defined by polynomials whose coefficients are independent of $w\in\Unv W$. Hence $\tilde V$ splits into a product $\Unv W\times F$, where $F$ is the general fiber of $h$. 

The vector bundle $E_1$ is induced by a representation $\pi_1(W)\to\GL(r+1)$, 
which gives rise to a representation $\rho \colon \pi_1(W)\to\PGL(r+1)$. 
For $\gamma\in\pi_1(W)$, the automorphism 
$\rho(\gamma) \colon \PP^r\to\PP^r$ sends the fiber $V_w$ at $w\in W$ 
to $V_{\gamma(w)}$ viewed as subvarieties of $\PP^r$. 
As seen before, the defining polynomial of $V_w$ in $\PP^r$ is independent of $w$. 
Hence $\rho(\gamma)$ can be restricted to $F$ so that we get the representation $\rho \colon \pi_1(W)\to\Aut(F)$. 
By construction, the variety $V$ is isomorphic to the quotient of $\tilde V$ 
by the action of $\pi_1(W)$. Hence $h$ is a locally constant fibration.

Moreover, since $F_m=h_\ast(mL-kD)$ is a sub-local system of $E_m$, 
the vector bundle $(h|_{kD})_\ast(mL|_{kD})$ is also a local system.
Then, by the same argument as above, we see that 
the automorphism $\rho(\gamma)\in\Aut(F)$ is also an automorphism of $D_F:=D|_F$ for every $\gamma\in\pi_1(W)$. 
Moreover $D$ is isomorphic to the quotient of $D\times_W\Unv W$ by the action of $\pi_1(W)$. 
Hence $h$ is a locally constant fibration with respect to $(V,D)$.
\end{proof}

\begin{rem}
\label{rmk_prop_flat-lcf}
\begin{itemize}
\item[\rm{(a)}] {\hyperref[prop_flat-lcf]{Proposition \ref*{prop_flat-lcf}}} differs from \cite[Proposition 2.1]{Wang20} at the point that we already assume that the $E_m$ denotes a local system so that we can tackle the non-compact case, and the result of Simpson \cite{Sim92} is not required in the proof; however, we must assume the compatibility condition (2). 
Note that the result is not true without the compatibility condition (2), e.g.,\,if $W$ is a non-compact Riemann surface, then every vector bundle on $W$ is trivial, but there are  non-isotrivial flat families over $W$.  

\item[\rm{(b)}] If we further assume that $W$ is a compact K\"ahler manifold, then the conditions (1), (2), and (3) can
 be replaced with the condition that $E_m$ and $F_m$ are both numerically flat vector bundles 
 by \cite{DPS94,Sim92,Deng17b,Cao13} 
 (cf.\,\cite[Proposition 2.4]{Cao19}, \cite[Proposition 2.8]{CCM19}, and \cite[Proposition 2.1]{Wang20}). 
Indeed, the compatibility conditions are automatically satisfied by \cite[Lemma 4.3.3]{Cao13}).

\item[\rm{(c)}] From the proof, 
we see that it is sufficient to check that the conditions (1), (2), and (3) hold for $m=1$ 
and for some sufficiently large $m=m_0$.
\end{itemize}
\end{rem}

\subsection{Singular Hermitian  metrics on torsion-free sheaves}\label{subsec-sing}

In this subsection, we introduce singular Hermitian  metrics on torsion-free sheaves on normal analytic varieties.

Let $\mathcal{E}$ be a torsion-free sheaf on a normal analytic variety $X$. 
Throughout this paper, the notation $X_{\mathcal{E}}$ denotes 
the maximal locally free locus  of $\mathcal{E}$, 
and $X_{\reg}$ (resp.\,$X_{\sing}$) denotes 
the regular locus (resp.\,the singular locus) of $X$. 
Note that $\codim X_{\sing} \geq 2$ and $\codim (X \setminus X_\mathcal{E}) \geq 2$  
by the normality of $X$ and the torsion-freeness of $\mathcal{E}$.

Let $g$ be a singular Hermitian  metric on $\mathcal{E}$ 
(which means a possibly singular Hermitian  metric  on the vector bundle $\mathcal{E}|_{X_{\reg} \cap X_\mathcal{E} }$).  
See \cite{Raufi15, Pau16, PT18, HPS18} for singular Hermitian  metrics on vector bundles 
and see also \cite[\S 1.4, \S 2.2.4]{Wang-thesis}. 
Let $\theta$ be a smooth $(1,1)$-form on $X$ admitting a local  potential function 
(i.e.,\,$\theta$ can be locally written as $\theta=\ddbar f$ for some smooth function $f$). 
Note that $d$-closed forms do not always admit local potential functions when $X$ has singularities. 
We write 
\begin{align}\label{weak}
\sqrt{-1}\Theta_{g}\succeq \theta \otimes {\rm{id}} \text{ on } X
\end{align}
if the local function $\log |e|_{g^{*}}-f$ is plurisubharmonic (psh) on $X_{\reg} \cap X_\mathcal{E} $ 
for any (holomorphic) local section $e $ of $\mathcal{E}^{*}$, 
where $f$ is a local potential of $\theta$ and 
$g^{*}$ is the induced metric on the dual sheaf $\mathcal{E}^*:=\SheafHom[] (\mathcal{E}, \mathcal{O}_{X})$. 
The notation $\sqrt{-1}\Theta_{g}$, which resembles the curvature, 
does not make sense  since an appropriate definition of curvature is not known in the higher-rank case, 
but  the notation  \eqref{weak} does make sense. 
The function $\log |e|_{g^{*}}-f$  is actually psh on $X$ 
since any psh functions on a Zariski open set $X_{0}$ of $\codim (X \setminus X_{0})\geq 2$ 
can be extended to $X$ by \cite[Satz 3, p.\,181]{GR56}.

\begin{defi}\label{def-weak}
Let $\mathcal{E}$ be a torsion-free sheaf and 
$\omega$ be a positive $(1,1)$-form  on a compact normal analytic variety $X$ admitting a local  potential function.

(1) $\mathcal{E}$
is said to be {\textit{weakly positively curved}} 
if $\mathcal{E}$ admits singular Hermitian  metrics $\{g_\e\}_{\e>0}$ 
such that 
$\sqrt{-1}\Theta_{g_\e} \succeq-\e \omega \otimes \id \text{ on } X$.

(2) $\mathcal{E}$ is said to be \textit{pseudo-effective} if, for any $m \in \mathbb{Z}_{>0}$, there exists 
a singular Hermitian metric $h_m$ on $(\Sym ^m \mathcal{E})^{**}$ such that 
$
\sqrt{-1}\Theta_{h_{m}} \succeq-\omega \otimes \id \text{ on } X. 
$  

\end{defi}

By definition, weakly positively curved sheaves are always pseudo-effective. 
Note that there is an algebraic-geometric characterization of the pseudo-effectivity 
when $\mathcal{E}$ is a locally free sheaf on a smooth projective variety 
(see \cite[Proposition 2.2]{HIM19} and references therein). 
In this paper, we consider the pullback of weakly positively curved sheaves by surjective morphisms. 
If a locally free sheaf is weakly positively curved, its pullback is so (e.g.,\,see \cite{PT18}). 
However, the behavior of torsion-free sheaves under a pullback differs 
from that of locally free sheaves (see  {\hyperref[rem-pull]{Remark \ref*{rem-pull}}} below). 
The following lemma gives a condition that guarantees that torsion-free sheaves satisfy this property.

\begin{lemm}\label{lem-pull}
Let $\phi \colon M \to X$ be a surjective morphism between  
$($not necessarily compact$)$ normal analytic varieties $M$ and $X$. 
Let $\theta$ be a smooth $(1,1)$-form on $X$ admitting a local potential function,  
and let $(\mathcal{E}, g)$ be a torsion-free sheaf on $X$ 
with a singular Hermitian  metric $g$ satisfying $\sqrt{-1}\Theta_g\succeq \theta\otimes\id$ on $X$. 
Assume that the inverse image $\phi^{-1}(X\setminus X_\mathcal{E})$ is of codimension $\geqslant 2$ 
and that the pullback $\phi^{*}(\mathcal{E}^*)$ is a reflexive sheaf on $M$. 

Then, the induced metric $\phi^{*}g$ on  $\phi^{*}\mathcal{E}|_{\phi^{-1}(X_{\mathcal{E}})}$ 
$($which is a vector bundle on $\phi^{-1}(X_{\mathcal{E}})$$)$
can be extended to the singular Hermitian  metric 
on the torsion-free sheaf $\phi^{*}\mathcal{E}/{\rm{Tor}}$ defined by the quotient  by its torsion subsheaf satisfying  that 
$$
\sqrt{-1}\Theta_{\phi^{*}g} \succeq \phi^*\theta \otimes \id \text{ on } M. 
$$ 
\end{lemm}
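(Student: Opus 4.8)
The strategy is to reduce the statement to the already-understood line bundle case by decomposing $\mathcal{E}^*$ locally and exploiting the reflexivity hypotheses together with the Hartogs-type extension theorem for psh functions quoted in the excerpt (\cite{GR56}). First I would work over the open locus $M_0 := \phi^{-1}(X_{\mathcal{E}}) \cap M_{\reg}$, which by hypothesis has complement of codimension $\geqslant 2$ in $M$: indeed $\phi^{-1}(X \setminus X_{\mathcal{E}})$ has codimension $\geqslant 2$ by assumption, $M \setminus M_{\reg} = M_{\sing}$ has codimension $\geqslant 2$ by normality of $M$, and the union of two such sets still has codimension $\geqslant 2$. On $M_0$ the sheaf $\phi^*\mathcal{E}$ is a genuine vector bundle, $\phi^*g$ is a bona fide (singular) Hermitian metric on it, and $\phi^*\theta$ is a smooth $(1,1)$-form admitting a local potential (pull back the potential $f$ of $\theta$; note $\phi^* f$ need not be smooth if $\phi$ is not a submersion, but $\ddbar \phi^*f = \phi^*\ddbar f = \phi^*\theta$ in the sense of currents, and $\phi^*\theta$ is smooth — one should be slightly careful here and replace "smooth potential" by "continuous/psh potential" as needed, which is harmless for the psh inequalities that follow).

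Next I would verify the defining inequality $\sqrt{-1}\Theta_{\phi^*g} \succeq \phi^*\theta \otimes \id$ on $M_0$ directly from the definition in \eqref{weak}. A local holomorphic section $\tilde e$ of $(\phi^*\mathcal{E})^* = \phi^*(\mathcal{E}^*)$ over an open set of $M_0$ is, after shrinking, a $\mathcal{O}_M$-combination $\tilde e = \sum_i h_i \cdot \phi^*(e_i)$ of pullbacks of local sections $e_i$ of $\mathcal{E}^*$ with holomorphic coefficients $h_i$; more robustly, since $\phi^*(\mathcal{E}^*)$ is generated over $M_0$ by pullbacks of sections of $\mathcal{E}^*$ after étale-local trivialization, it suffices to treat $\tilde e = \phi^*(e)$. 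For such a section, $|\phi^*e|_{(\phi^*g)^*} = \phi^*(|e|_{g^*})$ as functions, so $\log |\phi^*e|_{(\phi^*g)^*} - \phi^*f = \phi^*\big(\log|e|_{g^*} - f\big)$, which is the pullback of a psh function by the holomorphic (hence, on the smooth locus, either submersive on a dense open set or otherwise) surjective map $\phi$; the pullback of a psh function by any holomorphic map is psh. For a general section $\tilde e = \sum h_i \phi^*(e_i)$ one uses that $\log|\tilde e|_{(\phi^*g)^*} = \sup$ (over unit covectors) of $\log$ of a sum of such pullbacked psh exponentials — more cleanly, one invokes that the curvature inequality $\sqrt{-1}\Theta_g \succeq \theta \otimes \id$ is equivalent to the dual metric $g^*$ being $(-\theta)$-psh in the sense of Hermitian metrics, and this notion is stable under pullback by holomorphic maps on the locally free locus (this is precisely the statement recalled in the excerpt just before the lemma, for line bundles and via \cite{PT18} for bundles). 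So the inequality holds on $M_0$.

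Finally I would extend across $M \setminus M_0$. The metric $\phi^*g$ is a priori only defined on $\phi^{-1}(X_{\mathcal{E}})$; the claim is that it extends to all of $\phi^*\mathcal{E}$. Here the hypothesis that $\phi^*(\mathcal{E}^*)$ is reflexive is essential: for any local section $e'$ of $\phi^*(\mathcal{E}^*) = (\phi^*\mathcal{E})^*$ defined on an open set $U \subset M$, the function $u_{e'} := \log|e'|_{(\phi^*g)^*} - \phi^*f$ is defined and psh on $U \cap M_0$, which has complement of codimension $\geqslant 2$ in $U$, so by \cite[Satz 3]{GR56} it extends (uniquely) to a psh function on $U$. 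Running this over a covering of $M$ by local sections of the reflexive sheaf $(\phi^*\mathcal{E})^*$ and gluing produces the extended metric: concretely, reflexivity of $(\phi^*\mathcal{E})^*$ means $\phi^*\mathcal{E} = ((\phi^*\mathcal{E})^*)^*$ is determined by its restriction to any such complement-codimension-$\geqslant 2$ open set, and a compatible family of psh extensions of $|e'|^2$ for all local covectors $e'$ defines a singular Hermitian metric on $\phi^*\mathcal{E}$ over $M$ (one checks the metric axioms and compatibility on the overlap, which hold on the dense open $M_0$ and hence everywhere by continuity/identity principle). The resulting metric satisfies $\sqrt{-1}\Theta_{\phi^*g} \succeq \phi^*\theta \otimes \id$ on $M$ by the very definition of that notation (the required psh property was just verified for every local covector), which is the assertion.

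The main obstacle is the extension step — specifically, making rigorous that the pointwise-on-$M_0$ metric on the locally free sheaf $\phi^*\mathcal{E}|_{M_0}$ genuinely extends to a \emph{singular Hermitian metric on the reflexive sheaf} $\phi^*\mathcal{E}$ over $M$, rather than just extending each scalar psh function $\log|e'|_{(\phi^*g)^*}$ in isolation. One must check that the extensions for different covectors are mutually consistent (e.g. $|e' + e''|^2 \leqslant$ relevant combination, positivity, behaviour under $\mathcal{O}_M$-scaling) so that they assemble into a metric in the sense of \cite{Raufi15, HPS18}; this is where reflexivity of $\phi^*(\mathcal{E}^*)$ does the work, since it guarantees that the sheaf of covectors on $U$ is exactly the sections over $U \cap M_0$ that extend, so no "new" covectors appear at the boundary for which the metric is undefined. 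A secondary, more technical point is the potential non-smoothness of $\phi^* f$ when $\phi$ is not smooth; this is dealt with by noting that only the psh property of $\log|e'|_{(\phi^*g)^*} - \phi^*f$ is used, and this combination is well-defined and the extension theorem applies regardless of smoothness of the individual terms, so one should phrase the conclusion in terms of $\phi^*\theta$ as a smooth form (which it is, being the pullback of a smooth form) admitting the — possibly only continuous — local potential $\phi^*f$.
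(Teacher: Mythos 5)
Your proposal goes off the rails at the extension step because it misidentifies the a priori domain of the pulled-back metric. You work on $M_0 := \phi^{-1}(X_{\mathcal{E}}) \cap M_{\reg}$ and assert that $\phi^*g$ is a bona fide singular Hermitian metric on $M_0$. But $g$ is by definition only a metric on the vector bundle $\mathcal{E}|_{X_{\reg}\cap X_{\mathcal{E}}}$, so $\phi^*g$ is a priori only defined on $\phi^{-1}(X_{\reg}\cap X_{\mathcal{E}})$. These two open sets are genuinely different: the hypotheses control the codimension of $\phi^{-1}(X\setminus X_{\mathcal{E}})$, but they say nothing about $\phi^{-1}(X_{\sing})$, and when $\phi$ is not finite (as in the intended applications, e.g., a resolution $\bar\pi:\bar M\to\bar X$ of a singular normal variety, or a fiber space $M\to Y$) the preimage $\phi^{-1}(X_{\sing})$ can contain divisorial components of $M_{\reg}$. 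Consequently the complement of $\phi^{-1}(X_{\reg}\cap X_{\mathcal{E}})$ in $M$ can have codimension one, and the Grauert--Remmert extension theorem for codimension-$\geqslant 2$ analytic sets --- the only tool you invoke --- does not apply. To extend a $\phi^*\theta$-psh function across an analytic set that may be a divisor, you must additionally show it is locally bounded above, and your argument never establishes this.

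That boundedness is precisely where the reflexivity hypothesis does its work, and it is the crux of the paper's proof. The paper writes down a chain of isomorphisms (codimension-$\geqslant 2$ of $\phi^{-1}(X\setminus X_{\mathcal{E}})$ together with reflexivity of $(\phi^*\mathcal{E})^*=\phi^*(\mathcal{E}^*)$) to conclude that every local section $e$ of $(\phi^*\mathcal{E})^*$ over an open $B\subset M$ may be written as a finite $\mathcal{O}_M(B)$-combination $e=\sum_i s_i\otimes f_i$ with $s_i\in \Coh^0(V,\mathcal{E}^*)$ for an open $V\supset\phi(B)$; then Cauchy--Schwarz gives $|e|^2_{\phi^*g^*}\leqslant\sum_i|f_i|^2\,\phi^*|s_i|^2_{g^*}$, and since each $\log|s_i|_{g^*}$ is $\theta$-psh on $V$ (hence locally bounded above), so is $\log|e|_{\phi^*g^*}$ on $B$. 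You do mention a decomposition of this shape when checking the curvature inequality on the regular locus, but you deploy it for the wrong purpose and never extract the upper bound; your extension step silently assumes that the function is already psh on a set of codimension-$\geqslant 2$ complement, which is exactly what has to be proved. Supplying the boundedness argument would in effect recover the paper's proof.
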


\begin{rem}\label{rem-pull}
(1) The assumptions in {\hyperref[lem-pull]{Lemma \ref*{lem-pull}}} are automatically  satisfied if 
$\phi \colon M \to X$ is flat   
or if $\mathcal{E}$ is locally free  on $X$. 
Indeed, by $\codim (X\setminus X_\mathcal{E}) \geq 2$, 
the flatness implies $\codim \phi^{-1}(X\setminus X_\mathcal{E}) \geq 2$. 
Furthermore, the dual sheaf of any coherent sheaf is always reflexive, 
and the reflexivity is preserved under pullback by flat morphisms 
(see \cite[Corollary 1.2, Proposition 1.8]{Har80}).

(2) In general, the pullback $\phi^{*}\mathcal{E}$ is not always a torsion-free sheaf. 
Even if we consider $\phi^{*}\mathcal{E}/{\rm{Tor}}$, the conclusion of the lemma does not hold without the assumptions. 
For example, the maximal ideal $\mathfrak{m}_{p} \subset \mathcal{O}_{X,p}$ 
at a given point  $p \in X$ admits a singular Hermitian  metric $g$ 
such that $\sqrt{-1} \Theta_g = 0$ on $X$, 
which is induced by the trivial metric on $\mathcal{O}_{X}$. 
For the blow-up $\phi \colon M:={\rm{Bl}}_{p}(X) \to X$ at the point $p \in X$, 
the quotient sheaf $\phi^{*}\mathfrak{m}_{p}/{\rm{Tor}}$ of the pullback $\phi^{*}\mathfrak{m}_{p}$ is 
the ideal sheaf $\mathcal{O}_{M}(-E)$ associated with the exceptional divisor $E$. 
However, the sheaf $\mathcal{O}_M(-E)$ obviously admits no singular Hermitian  metric with semi-positive curvature. 
\end{rem}

\begin{proof}
Let $e$ be a section of $(\phi^{*}\mathcal{E})^{*}$ on an open set $B$ in $M$. 
Our purpose is to demonstrate that $\log |e| _{\phi^* g^{*}}$ is a $\phi^*\theta$-psh function on $B$ 
(i.e.,\,$\ddbar \log |e| _{\phi^* g^{*}} \geqslant \phi^{*}\theta$). 

The induced metric $\phi^* g^{*}$ is a priori defined on $\phi^{-1}(X_{\reg} \cap X_{\mathcal{E}})$. 
The conclusion of the lemma follows from \cite[Lemma 2.3.2]{PT18}   
when $X$ is smooth and $\mathcal{E}$ is locally free. 
Hence $\log |e| _{\phi^* g^{*}}$ is $\phi^*\theta$-psh on $B \cap \phi^{-1}(X_{\reg} \cap X_{\mathcal{E}})$. 
In general, any $\theta$-psh functions on  a Zariski open set 
can be extended on the ambient space if they are bounded from above.  
Therefore, it suffices to demonstrate that $\log |e| _{\phi^* g^{*}}$ is (locally) bounded from above. 

The assumptions yield the following isomorphisms: 
\begin{align*}
&\Coh^{0}(B, (\phi^{*}\mathcal{E})^{*}) \\
 \simeq &
\Coh^{0}(B \cap \phi^{-1}(X_\mathcal{E}), (\phi^{*}\mathcal{E})^{*}) 
\text{ by $\codim \phi^{-1}(X \setminus X_\mathcal{E}) \geqslant 2$ and reflexivity, }
\\ 
\simeq &
\Coh^{0}(B \cap \phi^{-1}(X_\mathcal{E}), \phi^{*} \mathcal{E}^{*})
\text{ by the local freeness of $ \mathcal{E}$ and $\phi^{*} \mathcal{E}$, }\\
\simeq &
\Coh^{0}(B, \phi^{*} (\mathcal{E}^{*}))
\text{ by $\codim \phi^{-1}(X \setminus X_\mathcal{E}) \geqslant 2$ and  reflexivity,}\\
= &
\ilim[\phi(B) \subset V \text{:open} ] \Coh^{0}(V, \mathcal{E}^{*}) 
\otimes_{\Coh^{0}(V, \mathcal{O}_X)} 
\Coh^{0}(B, \mathcal{O}_M) \text{ by the definition of the functor $\phi^{*}$}. 
\end{align*}
Therefore, the section $e$ can be written as 
$$
e=\sum_{i=1}^{m} s_i \otimes f_i
$$ 
for some $s_i \in \Coh^{0}(V, \mathcal{E}^{*}) $ and $f_i \in \Coh^{0}(B, \mathcal{O}_M)$. 
Then, we have 
$$
|e|^{2}_{{\phi^*g}^*}\leqslant \sum_{i=1}^m |f_i|^{2} | s_i \otimes 1|^{2}_{{\phi^*g}^*}
=\sum_{i=1}^m |f_i|^{2} \phi^{*} |s_i|_{g^*}^{2} \text{ on } B \cap \phi^{-1}(X_{\reg} \cap X_{\mathcal{E}})
$$ 
from the Cauchy-Schwarz inequality and the definition of ${\phi^*g}^*$. 
The function $\log |s_i|_{g^*}$ is $\theta$-psh on $V$ (in particular, bounded from above) 
by $\sqrt{-1}\Theta_g\succeq\theta\otimes \id$. 
Combining this fact with the above inequality, 
we  see that $|e|^{2}_{{\phi^*g}^*}$ is bounded from above on $B$, 
and thus, it can be extended to the $\phi^*\theta$-psh function on $B$. 
\end{proof}

At the end of this subsection, 
we investigate the push-forward  of weakly positively curved sheaves by birational morphisms. 

\begin{lemm}\label{lem-push}
Let $\pi \colon  M \to X$ be a birational morphism 
from a smooth projective variety $M$ to a  $\QQ$-factorial projective variety $X$, 
and let $\mathcal{F}$ be a weakly positively curved sheaf on $M$. 
Then, the push-forward $\pi_{*} \mathcal{F}$ is weakly positively curved  on $X$. 
\end{lemm}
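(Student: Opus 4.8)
The plan is to verify the definition of weak positive curvature for $\pi_\ast\mathcal F$ directly: to produce, for every $\varepsilon>0$, a singular Hermitian metric on $\pi_\ast\mathcal F$ with curvature $\succeq-\varepsilon\,\omega_X\otimes\id$, by transporting the approximating metrics of $\mathcal F$ across the locus where $\pi$ is an isomorphism and then extending the resulting plurisubharmonic weights across the remaining codimension-$\geqslant2$ set by the extension theorem \cite[Satz~3, p.~181]{GR56} recalled above.

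First I would record two reductions. Since $\pi$ is proper and $\mathcal F$ is torsion-free, $\pi_\ast\mathcal F$ is a torsion-free sheaf on the normal variety $X$, hence locally free on a big open set $X_{\pi_\ast\mathcal F}$ with $\codim_X(X\setminus X_{\pi_\ast\mathcal F})\geqslant2$; and since $\pi$ is birational with $X$ normal, there is a Zariski-open $X_0\subset X$ with $\codim_X(X\setminus X_0)\geqslant2$ over which $\pi$ restricts to an isomorphism $\pi_0\colon M_0:=\pi^{-1}(X_0)\to X_0$, so that the adjunction morphism identifies $\pi_\ast\mathcal F|_{X_0}$ with $(\pi_0)_\ast(\mathcal F|_{M_0})$, and likewise for the dual sheaves. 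It therefore suffices to construct, for each $\varepsilon>0$, a singular Hermitian metric on $\pi_\ast\mathcal F$ over $X_{\reg}\cap X_{\pi_\ast\mathcal F}$ whose curvature is $\succeq-\varepsilon\,\omega_X\otimes\id$ on $X$, where $\omega_X$ is a fixed positive $(1,1)$-form on $X$ with local potentials (say a Fubini--Study form for an embedding of $X$).

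Now fix $\varepsilon>0$. The point is to choose the reference Kähler form on $M$ carefully: since $X$ is $\QQ$-factorial and $\pi$ is projective birational, the negativity lemma furnishes an effective $\pi$-exceptional $\QQ$-divisor $E$ with $\Supp E=\Exc(\pi)$ such that $-E$ is $\pi$-ample, and taking $\omega_M$ in a class $\pi^\ast A-\delta_0 E$ (with $A$ ample on $X$ and $0<\delta_0\ll1$) one has, on $M_0=M\setminus\Exc(\pi)$, a presentation $\omega_M=\pi^\ast\omega_X+\ddbar\varphi$ with $\varphi$ smooth on $M_0$ and $\varphi\to-\infty$ along $\Exc(\pi)$. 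By weak positive curvature of $\mathcal F$, pick a singular Hermitian metric $g_\varepsilon$ on $\mathcal F$ with $\sqrt{-1}\Theta_{g_\varepsilon}\succeq-\varepsilon\,\omega_M\otimes\id$ on $M$, and transport $g_\varepsilon|_{M_0}$ through $\pi_0$ to a singular Hermitian metric $g'_\varepsilon$ on $\pi_\ast\mathcal F$ over $X_0\cap X_{\reg}\cap X_{\pi_\ast\mathcal F}$. Unwinding the inequality defining ``$\succeq$'' and using the special shape of $\omega_M$, one should obtain that for every local section $e$ of $(\pi_\ast\mathcal F)^\ast$ the weight $\log|e|_{(g'_\varepsilon)^\ast}-f_\varepsilon$, with $f_\varepsilon$ a local potential of $-\varepsilon\omega_X$, is plurisubharmonic on $X_0\cap X_{\reg}\cap X_{\pi_\ast\mathcal F}$; that is, $\sqrt{-1}\Theta_{g'_\varepsilon}\succeq-\varepsilon\,\omega_X\otimes\id$ already holds there. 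Since $(X_{\reg}\cap X_{\pi_\ast\mathcal F})\setminus X_0$ has codimension $\geqslant2$ in the manifold $X_{\reg}\cap X_{\pi_\ast\mathcal F}$, \cite[Satz~3, p.~181]{GR56} extends each of these weights to a plurisubharmonic function on $X_{\reg}\cap X_{\pi_\ast\mathcal F}$, so $g'_\varepsilon$ extends to a singular Hermitian metric on $\pi_\ast\mathcal F$ with $\sqrt{-1}\Theta\succeq-\varepsilon\,\omega_X\otimes\id$ on $X$; letting $\varepsilon\to0$ gives that $\pi_\ast\mathcal F$ is weakly positively curved.

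The hard part will be the curvature bookkeeping in the third paragraph. The transported form $(\pi_0^{-1})^\ast(\omega_M|_{M_0})$ is merely a Kähler form on $X_0$ and does \emph{not} extend to a smooth form on $X$ — it degenerates towards $\pi(\Exc(\pi))$ — so it cannot be compared directly with $\omega_X$; this is exactly where $\QQ$-factoriality of $X$ is used, the choice of $\omega_M$ inside $\pi^\ast A-\delta_0 E$ being designed to absorb the exceptional discrepancy, and getting the signs right in the plurisubharmonicity estimate near $\pi(\Exc(\pi))$ is the delicate part. (When $\mathcal F$ is moreover positively curved, i.e.\ carries a metric with $\sqrt{-1}\Theta\succeq0$, this difficulty disappears: one simply transports that metric, whose weights are honestly plurisubharmonic, and extends them by \cite[Satz~3, p.~181]{GR56}.)
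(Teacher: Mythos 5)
Your proposal takes the same broad route as the paper: transport the approximating metric through the isomorphism $\pi_0\colon M_0\to X_0$, use the $\QQ$-factoriality of $X$ to choose $\omega_M$ in a class $\pi^\ast A - cE$ with $E$ exceptional, and extend the resulting weights across $\pi(\Exc(\pi))$ by \cite[Satz~3]{GR56}. However, the central assertion in your third paragraph is false as stated, and it is exactly the assertion the whole argument rests on.

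You claim that, after transporting, $\sqrt{-1}\Theta_{g'_\varepsilon}\succeq-\varepsilon\,\omega_X\otimes\id$ \emph{already} holds on $X_0\cap X_{\reg}\cap X_{\pi_\ast\mathcal F}$, i.e.\ that $\log|e|_{(g'_\varepsilon)^\ast}-f_\varepsilon$ is psh with $f_\varepsilon$ a local potential of $-\varepsilon\omega_X$. Transporting $\sqrt{-1}\Theta_{g_\varepsilon}\succeq-\varepsilon\omega_M\otimes\id$ through $\pi_0$ and substituting $\omega_M=\pi^\ast\omega_X+\ddbar\varphi$ on $M_0$ only gives
\begin{equation*}
\ddbar\log|e|_{(g'_\varepsilon)^\ast}\;\geqslant\;-\varepsilon\bigl(\omega_X+\ddbar\tilde\varphi\bigr), \qquad\tilde\varphi:=(\pi_0^{-1})^\ast\varphi,
\end{equation*}
and $\ddbar\tilde\varphi=(\pi_0^{-1})^\ast\omega_M-\omega_X$ is a genuine $(1,1)$-form with no preferred sign; near $\pi(\Exc(\pi))$ it can even become very positive, because the transported Kähler form $(\pi_0^{-1})^\ast\omega_M$ degenerates there. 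So the untwisted $g'_\varepsilon$ does \emph{not} satisfy the advertised bound, and the weight you want to extend is not $\varepsilon\omega_X$-psh. What \emph{is} $\varepsilon\omega_X$-psh on $X_0$ is $\log|e|_{(g'_\varepsilon)^\ast}+\varepsilon\tilde\varphi$, and the metric that actually does the job is the conformal twist $H_\varepsilon:=g'_\varepsilon\, e^{-\varepsilon\tilde\varphi}$ --- this is precisely the paper's $H_\varepsilon:=h_\varepsilon e^{-\varepsilon\varphi}$. Once this twist is in place, the codimension-$\geqslant 2$ extension by \cite[Satz~3]{GR56} proceeds exactly as you describe (note $\tilde\varphi\to-\infty$ along $\pi(\Exc(\pi))$, which is harmless: it pushes the weight downward, never upward).

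You do flag the step as ``delicate'' and suggest the shape of $\omega_M$ ``absorbs the exceptional discrepancy,'' which is the right intuition, but the absorption is not automatic for the transported metric --- it must be implemented explicitly as the twist $e^{-\varepsilon\tilde\varphi}$, and your paragraph three asserts otherwise. A secondary point, which the paper addresses explicitly and your proposal elides: the paper proves that the potential $\psi$ on $M$ descends to a quasi-psh function $\varphi$ on all of $X$ (by showing $f+\varphi$ is psh on $X\setminus\pi(E)$ and extending across codimension $\geqslant 2$), so that $H_\varepsilon$ is defined globally on $\pi_\ast\mathcal F$ from the start rather than requiring a separate extension of the twist factor.
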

\begin{proof}

By assumption, there exists a singular Hermitian  metric $g_\e$ on $\mathcal{F}$
such that 
$$
\sqrt{-1}\Theta_{g_\e} \succeq-\e \omega_{M} \otimes \id \text{ on } M, 
$$ 
where $\omega_{M}$ is a K\"ahler form on $M$. 
By comparing  $\omega_{M}$ with a K\"ahler form on $X$, 
we investigate the singular Hermitian  metrics $h_{\e}$ on $\pi_{*}\mathcal{F}$ defined by pushing-forward $g_{\e}$.

Let $\omega_{X} $ be a K\"ahler form on $X$ 
defined by $\omega_X:= \Phi ^*\omega_{FS}$, 
where  $\Phi \colon X \to \mathbb{P}(|A|)$ is the embedding associated with a very ample line bundle $A$  on $X$
and $\omega_{FS}$ is the Fubini-Study form on $\mathbb{P}(|A|)$. 
Note that $\omega_X$ locally admits a smooth potential function by construction. 
By $\QQ$-factoriality, we can take a $\pi$-exceptional  effective $\QQ$-divisor $G$ on $M$ 
such that $\pi^* A -G$ is  ample  on $M$ (cf.\,\cite[Lemma 2.9, Complement 2.10, pp.\,73-74]{Kollar07}). 
We may assume that $\omega_M$ is a K\"ahler form representing $c_{1}(\pi^* A -G)$. 
Then, we can take a smooth $(1,1)$-form $\theta$ on $M$ such that 
$$
\theta \in -c_{1}(G) \text{ and }
\omega_M = \pi^{*} \omega_X + \theta. 
$$

We show that there exists a quasi-psh function $\varphi$ on $X$ 
such that 
\begin{align}\label{eq-pot}
\omega_M = \pi^{*} \omega_X  -[G] + \ddbar \pi^{*}\varphi,  
\end{align}
where $[G]$ is the integration current associated with the $\mathbb Q$-divisor $G$. 
This easily follows from the push-forward of currents when $X$ is smooth. 
For the reader's convenience, we present a proof  for  a singular $X$. 
We first take a quasi-psh function $\psi$ on $M$ such that 
$$
\omega_M = \pi^{*} \omega_X  -[G] + \ddbar \psi. 
$$
Let $E$ be the $\pi$-exceptional locus. 
The function $\varphi$ on $X \setminus  \pi(E)$ is defined by $\varphi=\pi_*\psi$ 
via the isomorphism $\pi \colon M\setminus E \simeq X \setminus \pi(E)$. 
Then, a local smooth function $f$ with $\omega_X=\ddbar f$ satisfies that 
$$
\ddbar (\pi^* f +  \pi^*\varphi) = \pi^{*}\omega_{X} + \ddbar \psi = \omega_{M} \text{ locally on }  M \setminus E 
$$ 
by  $\Supp (G) = E$. 
This indicates that $f + \varphi$ is a psh function on $X \setminus \pi(E)$. 
The function $\varphi$ can be extended to the quasi-psh function on $X$ 
since $f$ is smooth and $\pi(E)$ is of codimension $\geqslant 2$. 
Then, the extended function $\varphi$ satisfies that $\psi=\pi^* \varphi$ on $M$ 
since $\psi$ and $\pi^* \varphi$  are quasi-psh, which leads to the desired equality \eqref{eq-pot}.

We define the singular Hermitian  metric $h_\e$ 
on $\pi_*\mathcal{F}|_{X \setminus \pi(E)}$
by $h_{\e}:=\pi_{*}g_{\e}$. 
We will  construct weakly positively curved metrics on $\pi_*\mathcal{F}$
by modifying $h_{\e}$ with $\varphi$. 
For simplicity of the notation, 
we used the same notation $X$ (resp.\,$M$) to denote a sufficiently small open set in $X$ 
(resp.\,its inverse image by $\pi$). 
Let us consider the function 
$$\text{
$\varphi_\e:=\log |e|_{h_{\e}^*}$ on $X \setminus \pi(E)$ 
}
$$
for a local section $e$ of $(\pi_*\mathcal{F})^{*}$. 
We now have 
\begin{align*}
&\Coh^{0}(X, (\pi_*\mathcal{F})^*)\\
\simeq & \Coh^{0}(X \setminus \pi(E), (\pi_*\mathcal{F})^*) 
\text{ by } \codim \pi(E)\geqslant 2
\text{ and reflexivity,}\\
\simeq & \Coh^{0}(M\setminus E, \mathcal{F}^*)
\text{ by the isomorphism } \pi \colon M\setminus E \simeq X \setminus \pi(E). 
\end{align*}
This  indicates that the section $e$ of $(\pi_\ast\mathcal{F})^{*}$ can be identified with the section in 
$\Coh^{0}(M\setminus E, \mathcal{F}^*)$, which we denote by $\pi^{*}e$. 
Then, by  definition, we have 
$$
\pi^{*}\varphi_\e=\pi^* (\log |e|_{h_\e}^* )=\log |\pi^*e|_{\pi^{*}h_\e^{*}}=\log|\pi^{*}e|_{g_\e} \text{ on } 
M \setminus E. 
$$
Hence, we obtain 
$$
\ddbar \pi^{*}\varphi_{\e}=
\ddbar \log|\pi^{*}e|_{g_\e} \geqslant -\e \omega_M =-\e (\pi^*\omega_X + \ddbar \pi^*\varphi )
\text{ on } M \setminus E
$$
by the selection of $g_{\e}$ and equality \eqref{eq-pot}. 
This equality indicates that 
$$
\ddbar (\varphi_{\e} + \e \varphi)  \geqslant -\e \omega_X \text{ on } X\setminus \pi(E). 
$$
Hence $\varphi_{\e} + \e \varphi$ can be extended to 
the $\e \omega_X$-psh function on $X$
by $\codim \pi(E) \geqslant 2$. 
Note that $\varphi_{\e} + \e \varphi$ can be extended
because of $\varphi$ (which may have the pole), 
but this extension is not expected for $\varphi_{\e}$ itself. 
Thus, the metric $H_\e:=h_\e e^{-\e \varphi}$ on $\pi_* \mathcal{F}$ 
satisfies the definition of weakly positively curved metrics. 
 \end{proof}

\subsection{Finite quasi-\'etale covers}
\label{subsec_q-et}

In this subsection, following \cite{GKP16b}, 
we summarize the definition and basic properties of quasi-\'etale  covers.

\begin{defi}\label{def-finite}
Let $\nu \colon X \to Y$ be a surjective morphism between normal analytic varieties $X$ and $Y$. 

\begin{itemize}

\item[(1)]
$\nu \colon X \to Y$  is called a \textit{cover} 
(resp.\,\textit{finite cover}) of $Y$ 
if every fiber of  $\nu \colon X \to Y$  is the set of discrete points 
(resp.\,$\nu \colon X \to Y$ is a finite morphism).

\item[(2)]
$\nu \colon X \to Y$ is said to be \textit{quasi-\'etale}  if 
there exists a Zariski closed set $Z\subset X$ of codimension $\geqslant 2$ 
such that the induced morphism $\nu|_{X\setminus Z} \colon X\setminus Z \to Y$ is \'etale. 

\item[(3)] $\nu \colon X \to Y$ is said to be \textit{maximally quasi-\'etale}  if 
it is a finite quasi-\'etale cover  such that 
any finite \'etale cover of $X_{\reg}$ extends to a finite \'etale cover of $X$,
i.e.,\,the natural morphism 
$$
i_* \colon \hat{\pi}_1(X_{\reg}) \to \hat{\pi}_1(X)
$$
between the \'etale fundamental groups (which are the profinite completion of the topological fundamental groups) is isomorphic. 
We simply say that $X$ is maximally quasi-\'etale when $X$ itself satisfies the above condition.

\end{itemize}
\end{defi}

Note that the category of finite quasi-\'etale covers of $X$ 
is equivalent to that of finite index subgroups in the \'etale fundamental group $\hat{\pi}_1(X_{\reg})$  of $X_{\reg}$. 
For potentially klt projective varieties, 
a maximally quasi-\'etale cover always exists by \cite[Theorem 1.5]{GKP16a} 
and this property is preserved under birational morphisms (see {\hyperref[thm-mqe]{Theorem \ref{thm-mqe}}} (2)), 
which plays an important role in {\hyperref[sec-flat]{Section \ref*{sec-flat}}}.

\begin{theo} \label{thm-mqe}

Let $X$ be a potentially klt projective variety. Then, we have$:$
\begin{itemize}
\item[(1)]There exists a maximally quasi-\'etale cover Galois cover $\nu\colon \bar X \to X$. 

\item[(2)]
Let $X'$ be a potentially klt projective variety and $\pi\colon X' \to X$ be a birational morphism. 
If $X$ is a maximally quasi-\'etale cover, then so is $X'$. 
\end{itemize}
\end{theo}
\begin{proof}
Conclusion (1) is a direct consequence of \cite[Theorem 1.5]{GKP16a}. 
We check conclusion (2). 
Take a Zariski open set $U \subset X_{\reg}$ such that 
$\codim (X \setminus U) \geq 2$ holds and $\pi \colon X' \to X$ is isomorphic over $U$. 
Set $V:=\pi^{-1}(U) \subset X'$ and consider the following commutative diagram: 
\[
\xymatrix{
& \pi_1(V)  \ar@{->>}[r]^{j_* \quad }  \ar[d]_{\pi_*}^{\cong} & \pi_1(X'_{\reg}) 
\ar@{->>}[r] & \pi_1(X') \ar[d]_{\pi_*}^{\cong}  \\
& \pi_1(U) \ar[r]^{i_* \quad }_{\cong \quad } & \pi_1(X_{\reg}) \ar@{->>}[r]  & \pi_1(X). 
}
\]
The vertical arrows  are isomorphic by \cite{Tak03}. 
All the horizontal arrows are surjective since $X$ and $X'$ are normal, 
and further, the morphism $i_*\colon  \pi_1(U)  \to \pi_1(X_{\reg})$ induced by the natural inclusion 
$i\colon U \hookrightarrow X_{\reg}$ 
is isomorphic by $\codim (X_{\reg} \setminus U) \geq 2$. 
Then, by taking the profinite completion (which is a right-exact functor), 
we obtain the following diagram:
\[
\xymatrix{
& \hat \pi_1(V)  \ar@{->>}[r]^{j_*\quad}  \ar[d]_{\pi_*}^{\cong} & \hat \pi_1(X'_{\reg}) \ar@{->>}[r] & \hat \pi_1(X') \ar[d]_{\pi_*}^{\cong}  \\
& \hat \pi_1(U) \ar[r]^{i_*\quad}_{\cong\quad} & \hat \pi_1(X_{\reg}) \ar[r]_{\cong}  & \hat \pi_1(X). 
}
\]
Note that  $\hat \pi_1(X_{\reg}) \to  \hat \pi_1(X)$ is isomorphic 
since $X$ is a maximally quasi-\'etale cover. 
This implies that $\hat \pi_1(X'_{\reg}) \to  \hat \pi_1(X')$ is also isomorphic. 
\end{proof}

The following lemma is an elementary result obtained from the Stein factorization, 
and we thus omit the proof. 

\begin{lemm}
\label{lemma_term-model}
Let $g\colon X' \to X$ be a birational morphism and $\eta'\colon X_{1}' \to X'$ be a finite quasi-\'etale  cover 
between normal projective varieties. 
Then, there exists a projective variety $X_{1}$ 
with a finite quasi-\'etale cover  $\eta_{1}\colon X_{1} \to X$ 
and a birational morphism $g_{1}\colon X_{1}' \to X_{1}$ 
such that $\eta_{1}\circ g_{1}=g \circ \eta'$. 
\end{lemm}

\section{Flat connections on direct image sheaves}\label{sec-flat}

In this section, we study projective klt pairs with nef anti-log canonical divisor 
and  direct image sheaves  appropriately defined by their MRC fibrations. 
This section is devoted to the proof of {\hyperref[thm-flat]{Theorem \ref*{thm-flat}}}, 
which states  that the direct image sheaves satisfy a certain flatness 
up to $\QQ$-factorializations and finite quasi-\'etale covers. 

For a given klt pair $(X, \Delta)$, 
we take a maximally quasi-\'etale cover $\nu\colon \bar X \to X$ by {\hyperref[thm-mqe]{Theorem \ref{thm-mqe}}} (1) 
and a $\QQ$-factorial log terminal model $\pi\colon (\bar X^{\qf}, \bar \Delta\qf) \to (\bar X, \bar\Delta)$ by \cite[Corollary 1.4.3]{BCHM10}, 
where $\bar \Delta\qf$ and $\bar \Delta$ are $\mathbb{Q}$-divisors defined by the pullbacks.
Then, the pair $(\bar X\qf, \bar \Delta\qf)$ is a $\QQ$-factorial terminal pair 
and $\bar X\qf$ is a maximally quasi-\'etale cover by {\hyperref[thm-mqe]{Theorem \ref*{thm-mqe}} (2).  
We will apply {\hyperref[thm-flat]{Theorem \ref*{thm-flat}}}  to $(\bar X^{\qf}, \bar \Delta^{\qf})$
to derive the holomorphicity and local constancy of MRC fibrations of $\bar X^{\qf}$, 
and prove that the MRC fibration of $\bar X^{\qf}$ induces the desired MRC fibration for $(\bar X, \bar \Delta)$, 
which is discussed in {\hyperref[sec-MRC]{Section \ref*{sec-MRC}}}.

The proof of {\hyperref[thm-flat]{Theorem \ref*{thm-flat}}} clarifies 
why we need a finite quasi-\'etale cover of $X$ in {\hyperref[thm-main]{Theorem \ref*{thm-main}}}, 
whereas the quasi-\'etale cover  never appears when $X_{\reg}$ is  simply connected (see \cite{Wang20}) 
or when $X$ is smooth (see \cite{CCM19}). 
{\hyperref[thm-flat]{Theorem \ref*{thm-flat}}} requires a deeper insight  into nef anti-log canonical divisors  
and a more involved argument 
than what has been presented in the previous works \cite{Cao19, CH19, CCM19, Wang20}.

\subsection{Setting and goal for this section}
\label{subsec-setting}

In this subsection, we give a precise formulation of {\hyperref[thm-flat]{Theorem \ref*{thm-flat}}} 
after we describe our setting in this section.

\begin{setting}\label{setting}
Let $(X, \Delta)$ be a projective $\QQ$-factorial terminal  pair 
such that $X$ is a maximally quasi-\'etale cover and  
the anti-log canonical divisor $-(K_X+\Delta)$ is nef. 
 Let $\psi \colon X \dashrightarrow Y$ be an MRC fibration of $X$ 
to a smooth projective variety $Y$. 
Let $\pi \colon M \to X$ be a resolution of singularities of $X$ and indeterminacies of $\psi \colon X \dashrightarrow Y$ 
with a morphism $\phi \colon M \to Y$ in the following diagram:  
\begin{align}\label{comm-start}
\xymatrix{
M  \ar[rr]^{\pi} \ar[rd]_{\phi}&   & X \ar@{.>}[ld]^{\psi} \\
  & Y. & 
}
\end{align}

\noindent Let $A$ be a (sufficiently) ample line bundle on $X$ and set $E:=\Exc(\pi)$. 
(In fact, {\hyperref[thm-flat]{Theorem \ref*{thm-flat}}} is proved 
for any effective divisor $E$ whose support coincides with $\Exc(\pi)$, 
but in this paper we set $E:=\Exc(\pi)$ for simplicity.)
%be an effective divisor on $M$ with $\Supp (E) =\Exc(\pi)$, where $\Exc(\pi)$ is the $\pi$-exceptional locus. 
For a fixed  integer $c \in \mathbb{Z}_{>0}$ (which we take to be large enough later), 
we consider the determinant sheaf 
$$\det \phi_* \mathcal{O}_{M}(\pi^*A + cE):= 
\Big (\bigwedge^{r} \big( \phi_* \mathcal{O}_{M}(\pi^*A + cE) \big) \Big )^{**}, 
$$  
where $r$ is the rank of the direct image sheaf $\phi_* \mathcal{O}_{M}(\pi^*A + cE)$. 
Note that  this determinant sheaf  is an invertible sheaf (a line bundle)
since it is reflexive and $Y$ is smooth. 
We define the $\phi$-big line bundle $L_{m}$ on $M$  by   
$$
L_{m}:=\mathcal{O}_{M}(m(\pi^*A + cE)) 
-\frac{m}{r}\phi^* \det \phi_* \mathcal{O}_{M}(\pi^*A + cE), 
$$
where $m$ is a positive integer with $m/r \in \mathbb{Z}$. 
Here, we used the additive notation for tensor products and 
we can regard $L_{m}$ as a line bundle by $m/r \in \mathbb{Z}$ 
(see {\hyperref[subsec-notation]{Subsection \ref*{subsec-notation}}}). 
Furthermore, we define the direct image sheaf  $\mathcal{V}_{m}$ on $Y$  by 
$$
\mathcal{V}_{m}:=\phi_* \mathcal{O}_{M}( L_{m} ).
$$
The subscript $m$ in $\mathcal{V}_m$ and $L_{m}$ is not important in most of this section, 
and we thus often omit the subscript $m$ to simplify the notation. 
Let $Y_0 \subset Y$ be the maximal Zariski open set in $Y$ 
satisfying the following properties: 
\begin{itemize}
\item $\phi \colon M\to Y$ is a flat  morphism over $Y_0$. 
\item $\phi^{*}P$ is not $\pi$-exceptional for any prime divisor $P$ on $Y_0$. 
\end{itemize}
\end{setting}

This section aims to prove that 
$\mathcal{V}_{m}$ satisfies a certain flatness on $Y_{0}$.

\begin{theo}\label{thm-flat}
Consider the same situation as in {\hyperref[setting]{Setting \ref*{setting}}}. 
Then, for some fixed $c \in \mathbb{Z}_{>0}$ and 
for every $m \in \mathbb{Z}_{>0}$ with $m/r \in \mathbb{Z}$, 
there exists a Zariski closed set  $C_{m} \subset Y$ such that  
\begin{itemize}
\item[$\bullet$] $C_{m} \subset Y$  is of codimension $\geqslant2$$;$ 
\item[$\bullet$] $\mathcal{V}_{m}$ is locally free on $Y_{0} \setminus C_{m}$$;$ 
\item[$\bullet$] $\mathcal{V}_{m}$ admits a flat connection on $Y_{0} \setminus C_{m}$. 
\end{itemize}

\end{theo}

The proof of {\hyperref[thm-flat]{Theorem \ref*{thm-flat}}} is divided into three subsections. 
Throughout this section, we keep {\hyperref[setting]{Setting \ref*{setting}}} and promise 
that $m$ always satisfies $m/r \in \mathbb{Z}$.

\subsection{Birational semi-stability for MRC fibrations}
\label{subsec-semistability}

In this subsection, we confirm 
a certain birational semi-stability for the MRC fibration $\psi \colon X \dashrightarrow Y$. 
Such a semi-stability result, which essentially follows from \cite[Main Theorem]{Zhang05}, 
has been explicitly formulated in \cite{CH19}, \cite[Theorem 3.2]{CCM19}, and \cite[Proposition 3.1]{Wang20}. 
{\hyperref[prop-2.2]{Proposition \ref*{prop-2.2}}} is a slight refinement of the above results.

\begin{prop}\label{prop-2.2}
The following statements hold$:$ 
\begin{itemize}
\item[\rm(a)]$ \phi^* N_Y$ is $\pi$-exceptional for any effective divisor $N_Y$ 
on $Y$ with $N_Y \sim_{\QQ} K_Y$. 
\item[\rm(b)] The Kodaira dimension of $Y$ is zero $($i.e.,\,$\kappa(K_Y)=0$$)$. 
\item[\rm(c)] $\pi(\phi^{-1}(Y \setminus Y_0))$ is of codimension $\geqslant 2$. 
\item[\rm(d)] $Y_0$ has the generalized Liouville property in the following sense$:$ 
For a flat vector bundle  $(\calH_0,\nabla_0)$ on $Y_0$  with the following condition $\rm(\bullet)$, 
every global section of $\calH_0$ is parallel with respect to $\nabla_0$. 
\begin{itemize}
\item[\rm($\bullet$)] 
There exists a numerically flat vector bundle $\calH$ on $X$ such that 
$$
\text{$(\phi^\ast\calH_0, \phi^\ast \nabla_0) \simeq (\pi^\ast\calH, \nabla)$ on $M_{0}:=\phi\inv(Y_0)$, 
}
$$
where $\phi^\ast \nabla_0$  is the connection on $\phi^\ast\calH_0$ 
defined by the pullback and $\nabla$ is the $($unique$)$ flat connection 
on $\pi^\ast\calH$ defined by \cite[Corollary 3.10 and the discussion thereafter]{Sim92} 
$($which is compatible with the filtration given by \cite[Theorem 1.18]{DPS94}$)$. 
\end{itemize}

\item[\rm(e)] $\psi$ is semi-stable in codimension one in the following sense$:$ 
Let $P$ be a prime divisor  on $Y_0$ and $\phi^*P=\sum_i c_i P_i$ 
be the irreducible decomposition of $\phi^*P$. 
Then, any non-reduced component $P_i$ {\rm (}i.e.,\,a component $P_{i}$ with $c_i > 1${\rm )} is $\pi$-exceptional.
\item[\rm(f)] $\Delta$ is horizontal with respect to $\psi$ 
$($i.e.,\,$Y=\phi( \pi^{-1}_{*} \Delta_{i})$ for any component $\Delta_{i}$ of $\Delta$$)$.

\end{itemize} 
\end{prop}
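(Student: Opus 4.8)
The plan is to derive all six statements from the defining property of the MRC fibration together with the deep positivity input of Zhang (\cite{Zhang05}), applied to the nef anti-log canonical divisor $-(K_X+\Delta)$. The cornerstone is Zhang's theorem, which (in the birational form needed here) says that if $-(K_X+\Delta)$ is nef and $\psi\colon X\dashrightarrow Y$ is an MRC fibration with $Y$ smooth, then $\phi^*$ of any pseudo-effective divisor class supported by $K_Y$ is $\pi$-exceptional; more precisely, one pushes the nefness of $-(K_X+\Delta)$ down through $\phi$ to see that no positive multiple of $K_Y$ can acquire a non-$\pi$-exceptional effective representative. So I would first record (a) as essentially a restatement of \cite[Main Theorem]{Zhang05} in the present setup, taking care that the resolution $\pi\colon M\to X$ and the morphism $\phi\colon M\to Y$ are as in \eqref{comm-start} and that one may pull back Weil $\QQ$-divisors since $\pi$ is birational. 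Statement (b) is then immediate: $\kappa(K_Y)\geqslant 0$ follows from the MRC characterization (by \cite{BDPP13}, $K_Y$ is pseudo-effective since the general fiber is rationally connected and $Y$ carries no further rational curves sweeping it out), while $\kappa(K_Y)\leqslant 0$ follows from (a) — any effective divisor $N_Y\sim_{\QQ}mK_Y$ would have $\pi$-exceptional pullback $\phi^*N_Y$, forcing $N_Y$ itself to be numerically trivial on the image, hence $h^0(Y,mK_Y)\leqslant 1$ for all $m$.

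Next, (c): I would argue that the locus where $\phi$ fails to be flat, together with the locus where some prime divisor on $Y$ pulls back to a $\pi$-exceptional divisor, has image under $\pi$ of codimension $\geqslant 2$ in $X$. The flatness-failure locus is automatically of codimension $\geqslant 2$ in $Y$ (it is the non-Cohen–Macaulay/non-flat locus of a fibration with equidimensional generic behavior over the smooth $Y$), and for the second condition one uses that a prime divisor $P$ on $Y_0$ with $\phi^*P$ entirely $\pi$-exceptional would contradict the fact that $\phi$ is genuinely fibered over $Y$ — the generic point of such $P$ would have to map into $\Exc(\pi)$, whence $\pi(\phi^{-1}(P))$ has codimension $\geqslant 2$; taking the finite union over all such $P$ (there are finitely many since $\Exc(\pi)$ has finitely many components) gives (c). Statement (f), that $\Delta$ is horizontal, I would prove by contradiction: a vertical component $\Delta_i$ of $\Delta$ would make $-(K_X+\Delta)$ fail to be nef on a covering family of rational curves inside a general fiber $X_y$ — indeed $X_y$ is rationally connected (klt, so \hyperref[thm-funda]{Theorem \ref*{thm-funda}} applies after passing to a log resolution), and a free rational curve $C\subset X_y$ has $-(K_X+\Delta)\cdot C = -K_{X_y}\cdot C - \Delta_i\cdot C < -K_{X_y}\cdot C$; bend-and-break combined with the nefness forces $\Delta_i\cdot C=0$ for all such $C$, which (since free curves cover $X_y$) forces $\Delta_i$ not to dominate — a direct contradiction unless $\Delta_i$ is horizontal.

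For (e), semi-stability in codimension one, I would again invoke Zhang's circle of ideas together with (a): writing $K_{M/Y}=\phi^*K_Y - K_M + (\text{stuff})$ and comparing with $\pi^*(K_X+\Delta)$, any non-reduced component $P_i$ of a fiber $\phi^*P$ over a prime divisor $P\subset Y_0$ contributes a genuine positive coefficient to $K_{M/Y}$ along $P_i$ via the ramification formula, and if $P_i$ were not $\pi$-exceptional this would force $-(K_X+\Delta)$ to be non-nef in codimension one on $X$; this is exactly the content of the semi-stability statements in \cite[Theorem 3.2]{CCM19} and \cite[Proposition 2.2]{Wang20}, and I would adapt their proof verbatim, the only novelty being that here $M$ and $Y$ are not assumed to be obtained by further blow-ups, so I must phrase the ramification comparison intrinsically on the given $M$.

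The step I expect to be the genuine obstacle is (d), the generalized Liouville property of $Y_0$. This is the assertion that every global section of a flat bundle $(\calH_0,\nabla_0)$ on $Y_0$ which underlies (via $\phi$) a numerically flat bundle $\calH$ on $X$ must be $\nabla_0$-parallel. My plan here is: a global section $s$ of $\calH_0$ pulls back to a section $\phi^*s$ of $\pi^*\calH$ on $M_0=\phi^{-1}(Y_0)$; because $\calH$ is numerically flat on $X$ it carries, by \cite[Corollary 3.10]{Sim92} and the filtration result \cite[Theorem 1.18]{DPS94}, a canonical flat connection $\nabla$, and numerically flat bundles on the \emph{projective} variety $X$ have the property that holomorphic sections are automatically flat (this is where projectivity/properness of $X$ is essential, in contrast to the open $Y_0$). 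So $\phi^*s$ is $\pi^*\nabla$-parallel on $M_0$; transporting back through the isomorphism $(\phi^*\calH_0,\phi^*\nabla_0)\simeq(\pi^*\calH,\nabla)$, the section $\phi^*s$ is $\phi^*\nabla_0$-parallel on $M_0$; and since $\phi\colon M_0\to Y_0$ is surjective with connected fibers, $\nabla_0$-parallelism of $s$ can be checked after pulling back, giving the conclusion. The delicate points are: (i) making precise that sections of a numerically flat bundle on a projective variety are parallel (via the Narasimhan–Seshadri/Simpson correspondence and the fact that a parallel-section space is detected by monodromy invariants, which is a closed condition insensitive to the open locus), and (ii) checking that the isomorphism in hypothesis $(\bullet)$ is compatible with connections on the nose, not merely up to a gauge — but this is exactly how the hypothesis is stated, so I would simply be careful to use it as given.
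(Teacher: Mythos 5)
Your proposal takes a genuinely different route from the paper's. The paper does not re-derive the semi-stability from Zhang's theorem; instead it treats \cite[Proposition 2.2]{Wang20} as the \emph{known case} -- there the morphisms $\tilde\phi:\tilde M\to\tilde Y$ and $\tilde\pi:\tilde M\to X$ have already been replaced by further blow-ups and satisfy (a), (b), (c), (e), (f) -- and then descends each statement to the original diagram $\phi:M\to Y$, $\pi:M\to X$ through the commutative square \eqref{comm-sm}. You instead propose to argue directly on $M$ and $Y$ from Zhang's theorem and bend-and-break. This is the approach of the earlier references, but it silently reintroduces the blow-up replacements; the novelty of the proposition here is precisely the intrinsic formulation, and the paper's descent argument is what earns it.

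There are concrete gaps. Your argument for (f) is wrong: if $\Delta_i$ is vertical (does not dominate $Y$), then for general $y\in Y$ the fiber $X_y$ \emph{misses} $\Delta_i$, so for any curve $C\subset X_y$ one has $\Delta_i\cdot C=0$, not $>0$. There is no inequality $-(K_X+\Delta)\cdot C<-K_{X_y}\cdot C$, bend-and-break produces no contradiction, and the chain of implications "$\Delta_i\cdot C=0$ forces $\Delta_i$ not to dominate -- a contradiction unless horizontal" runs in a circle. The real mechanism is a canonical-bundle-formula argument: a vertical component of $\Delta$ contributes an effective term to the discriminant on $Y$, which is incompatible with $\kappa(K_Y)=0$; the paper simply inherits this from the known case. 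For (c), you correctly observe that the non-flat locus $W$ of $\phi$ has codimension $\geqslant 2$ in $Y$ and that only finitely many prime divisors $P$ can have $\phi^*P$ fully $\pi$-exceptional, but you do not address the key point: $\phi^{-1}(W)$ can be a \emph{divisor} in $M$ (fibers jump in dimension over $W$), and one must show that its divisorial components are $\pi$-exceptional; otherwise $\pi(\phi^{-1}(W))$ could be a divisor in $X$. The paper resolves exactly this by identifying $\tilde Y_0$ with $Y_0$ in codimension one under $\pi_Y$ and then invoking (c) for $\tilde\phi$. Finally, for (e), "adapt the proof verbatim, phrasing the ramification comparison intrinsically on $M$" is the entire difficulty, not a remark: the cited proofs are carried out after blow-ups, and the paper's actual proof descends non-reduced components of $\phi^*P$ to non-reduced components of $\tilde\phi^*\tilde P$ via strict transform under $\pi_M$ before applying the known case. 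Your treatment of (d) is essentially correct and matches the paper's, provided you supply the parallelism statement for sections of numerically flat bundles on projective varieties (\cite[Theorem 4.3.3]{Cao13}) together with the codimension-$\geqslant 2$ extension coming from (c).
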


\begin{rem}\label{rem-qabelian}
In the proof, we assume that $\psi \colon X \dashrightarrow Y$ is an MRC fibration 
only to deduce that $Y$ is not uniruled. 
In fact,  we can obtain all the conclusions 
for an almost holomorphic map $\psi \colon X \dashrightarrow Y$ with a non-uniruled $Y$ 
under the weaker assumption that $(X, \Delta)$ is log canonical. 
The case of $\calH=\OX_{Y_0}$ in property (d) 
is nothing but \cite[Proposition 3.1(d)]{Wang20}.
\end{rem}

\begin{proof}
Note that the base variety $Y$ of MRC fibrations is not uniruled by \cite{GHS03}. 
Hence, all the conclusions except for (d) follow from \cite[Proposition 3.1]{Wang20}. 

The generalized Liouville property (i.e.\,conclusion (d)) essentially follows  from property (c). 
Let $s\in\Coh^0(Y_0,\calH_0)$. Then $s$ is parallel with respect to $\nabla_0$ if and only if $\phi^\ast\!s$ is parallel with respect to $\phi^\ast\nabla_0=\nabla|_{\phi\inv(Y_0)}$. 
Since the complement of $\pi(\phi\inv(Y_0)\setminus  E)$ in $X$ is of codimension $2$ by property (c), 
$$\text{
the section 
$\phi^\ast\!s|_{\phi\inv(Y_0)\setminus E}$ on $\phi \inv(Y_0) \setminus  E \simeq  \pi (\phi^{-1} (Y_{0}) \setminus  E)$ 
}
$$
induces a section $\sigma\in\Coh^0(X,\calH)$ by reflexivity (via the isomorphism $\pi|_{\phi\inv(Y_0)\setminus  E}$). Then $\pi^\ast\!\sigma$ is parallel with respect to $\nabla$ by \cite[Theorem 4.3.3]{Cao13}, and hence so is $\phi^\ast\!s$. 
\end{proof}

From the next subsection, we examine the local freeness and 
numerical flatness of the reflexive sheaf $\mathcal{E}_{m}$ on $X$ defined by   
$$
\mathcal{E}_{m}:= (\pi_* \phi^* \mathcal{V}_{m})^{* *}. 
$$ 
At the end of this subsection, following \cite{DPS94}, 
we recall the definition of numerically flat vector bundles. 
Note that we do not define the numerical flatness for non-locally free sheaves. 

\begin{defi}[{Numerically flat locally free sheaves}]\label{def-numflat}
Let $\mathcal E$ be a locally free sheaf on  a projective variety. 
The  sheaf $\mathcal E$  is said to be \textit{numerically effective $($nef$)$} 
if the hyperplane bundle $\mathcal{O}_{\mathbb{P}(\mathcal E)}(1)$  on the projective space bundle $\mathbb{P}(\mathcal E)$ 
is a nef line bundle. 
Furthermore, the sheaf $\mathcal E$  is said to be \textit{numerically flat} if   
$\mathcal E$  is nef and $c_{1}(\mathcal E)=0$. 
\end{defi}

\subsection{Positivity of singular Hermitian metrics on direct image sheaves}
\label{subsec-posi}

The purpose of this subsection is to prove {\hyperref[cor-free]{Proposition \ref*{cor-free}}}, 
which states that $\mathcal{E}_{m}$ is a numerically flat vector bundle on $X$. 
In the proof of {\hyperref[cor-free]{Proposition \ref*{cor-free}}}, 
we use the assumption that $X$ is a maximally quasi-\'etale cover 
and smooth in codimension two.

We first prove {\hyperref[thm-free]{Proposition \ref*{thm-free}}} 
by applying {\hyperref[lem-push]{Lemma \ref*{lem-push}}} and the positivity of direct image sheaves. 
In the proof of {\hyperref[thm-free]{Proposition \ref*{thm-free}}}, 
we use the assumption that $X$ is  $\QQ$-factorial.

\begin{prop}\label{thm-free}
There exists a sufficiently large integer $c\in\mathbb{Z}_{>0}$ 
$($recalling that $c$ is an integer appearing in the definition of $\mathcal{V}_{m}$$)$
such that every $m \in \mathbb{Z}_{>0}$ the following properties holds$:$ 
\begin{itemize}
\item[$(1)$] $\mathcal{E}_{m}$ satisfies that $c_1(\mathcal{E}_{m})=0$. 
\item[$(2)$] $\mathcal{E}_{m}$ is weakly positively curved on $X$. 
\end{itemize}
\end{prop}

\begin{proof}[Proof of {\hyperref[thm-free]{Proposition \ref*{thm-free}}}]
We deduce (1) and (2) from \cite[Proposition 3.5, Lemma 3.1]{Wang20} and 
Lemmas \ref{lem-pull} and \ref{lem-push}. 
Note that the assumption of \cite[Proposition 3.5, Lemma 3.1]{Wang20} 
is satisfied thanks to the $\QQ$-factoriality of $X$. 

We first prove conclusion (1). 
By \cite[Proposition 3.5]{Wang20} (or \cite[Proposition 3.9]{CCM19}), 
we have already known that $\mathcal{V}_{m}$ satisfies that $c_1(\pi_*\phi^* \mathcal{V}_{m})=0$. 
Strictly speaking, for some sufficiently large integer $c\in\mathbb{Z}_{>0}$, 
we obtain $c_1(\pi_*\phi^* \mathcal{V}_{m})=0$. 
 The sheaf $\phi^* {\mathcal{V}_{m}}$ is reflexive on $M_{0}=\phi^{-1}(Y_{0})$
since $\phi \colon M \to Y$ is flat over $Y_{0}$ by \cite[Proposition 1.8]{Har80}. 
Furthermore, the Zariski closed subset $\pi( M \setminus M_{0})$ is of codimension $\geqslant 2$ 
by property (c) of {\hyperref[prop-2.2]{Proposition \ref*{prop-2.2}}}. 
Hence, we obtain 
$$
\pi_* {\phi}^* {\mathcal{V}_{m}}=(\pi_* {\phi}^* {\mathcal{V}_{m}})^{**}=\mathcal{E}_{m}  \text{ on } 
X \setminus \pi(\Exc (\pi))
$$ 
from the isomorphism $\pi  \colon M \setminus \Exc (\pi) \simeq X \setminus \pi (\Exc (\pi))$. 
Then, we see that 
$
0= 
c_{1}(\pi_* {\phi}^* {\mathcal{V}_{m}})=c_{1}(\mathcal{E}_{m}) 
$
by $\codim \pi (\Exc(\pi)) \geqslant 2$. 

We finally prove conclusion (2).
By \cite[Lemma 3.1]{Wang20},  
the sheaf $\mathcal{V}_{m}$ is weakly positively curved on $Y_0$ in the following sense: 
For any $\e>0$, there exists a singular Hermitian  metric $b_\e$ on $\mathcal{V}_{m}|_{Y_{0}}$
such that 
$$
\sqrt{-1}\Theta_{b_\e} \succeq-\e \omega_{Y} \otimes \id \text{ on } Y_{0} 
$$  
for some K\"ahler form $\omega_{Y} $ on $Y$. 
The key point here is that $\omega_{Y} $ in the above inequality 
is a K\"ahler form on $Y$ (not only on $Y_{0}$).   
Then, we  deduce that $\phi^* \mathcal{V}_{m}$ is weakly positively curved on $M\setminus E$ in the sense that  
\begin{align}\label{ineq3}
\sqrt{-1} \Theta_{g_{\e}} \succeq  -\e \omega_{M} \otimes \id \text{ on } M\setminus E, 
\end{align}
for some singular Hermitian  metric $g_\e$  on $\phi^* \mathcal{V}_{m}|_{M\setminus E}$, 
where $\omega_M$ is a fixed K\"ahler form on $M$. 
Indeed, the induced morphism $\phi\colon M_{0}=\phi^{-1}(Y_{0}) \to Y_{0}$  
satisfies the assumptions of {\hyperref[lem-pull]{Lemma \ref*{lem-pull}}} (see also {\hyperref[rem-pull]{Remark \ref*{rem-pull}}}). 
By applying {\hyperref[lem-pull]{Lemma \ref*{lem-pull}}} to $ \mathcal{V}_{m}$ 
(which is weakly positively curved  on $Y_{0}$), 
we see that \eqref{ineq3} holds on $M_0$. 
Here, we use the fact that $\phi^{*} \omega_{Y} \leqslant  k \omega_{M}$ holds on $M$ 
for a sufficiently large $k$. 
Any $\e \omega_M$-psh functions  on the complement of a Zariski closed set  
of codimension $\geqslant2$  can  be automatically extended to the ambient variety by \cite[Satz 3, p.\,181]{GR56}. 
By applying this fact to the non-divisorial part of $M \setminus M_0$, 
we see that \eqref{ineq3} holds  outside the divisorial components of $M \setminus M_0$. 
The divisorial components of $M \setminus M_0$ are contained in $\Exc(\pi)$ 
by property (c) of {\hyperref[prop-2.2]{Proposition \ref*{prop-2.2}}}.  
We therefore conclude that $\pi_* {\phi}^* {\mathcal{V}_{m}}$ (and thus $\mathcal{E}_{m}$)   
is weakly positively curved on $X \setminus \pi(\Exc(\pi))$. 
Then conclusion (2) follows from $\codim \pi(\Exc(\pi))\geqslant 2$. 
\end{proof}

Hereafter, we fix $c\in\mathbb{Z}_{>0}$  satisfying {\hyperref[thm-free]{Proposition \ref*{thm-free}}}. 
We now discuss the numerical flatness of pseudo-effective sheaves with vanishing first Chern class. 
If $X$ is smooth, then a pseudo-effective reflexive sheaf with vanishing first Chern class is a fortiori locally free and numerically flat by \cite{HIM19,Wu20}. 
This is nevertheless not true  if $X$ has singularities as in the following example: 

\begin{ex}\label{ex-qabelian}
Let $X$ be a rationally connected $Q$-abelian variety, 
i.e.,\,there exists a finite quasi-\'etale cover $\nu \colon A \to X$ by an abelian variety $A$  
(see \cite{Cam10}, \cite[\S 2, Corollary 24, p.\,193]{KL09} for details of $Q$-abelian varieties). 
Then, the tangent sheaf $T_{X}$ admits a flat Hermitian metric on $X_{\reg}$ (in particular, $T_X$ is pseudo-effective) 
since $T_{X_{\reg}}$ is an \'etale quotient of the trivial tangent bundle on the abelian variety. 
Furthermore, the variety $X$ has klt singularities by \cite[Proposition 5.20, p.160]{KM98}  and 
the canonical divisor $K_X$ is $\QQ$-linearly trivial by $p^* K_X \sim_{\QQ} K_A=\mathcal{O}_{A}$. 
Nevertheless, the tangent sheaf $T_{X}$ never is locally free. 
Indeed, if $T_{X}$  is locally free, the variety $X$ is smooth, which contradicts the rational connectedness. 

This example also shows that the fiber dimension of the MRC fibration is not preserved under taking quasi-\'etale covers. 
Indeed, the constant map $X \to \{*\}$ is an MRC fibration of $X$, 
but the identity map $\text{id} \colon A \to A$ is an MRC fibration of the quasi-\'etale cover $A$. 
\end{ex}

Taking a finite quasi-\'etale cover, we can obtain the local freeness and numerical flatness as in \cite{HIM19,Wu20}.

\begin{theo}[{\cite[Theorem 1.8]{HP19}, \cite[Theorem 1.20]{GKP16b}}]
\label{thm-etaleflat}
Let $\mathcal{F}$ be a reflexive sheaf on a normal projective variety $Z$. 
Assume that $Z$ is potentially klt and smooth in codimension two, and 
that $\mathcal{F}$ is pseudo-effective $($which is satisfied if it is weakly positively curved$)$ 
and satisfies $c_{1}(\mathcal{F}) =0$. 
Then, there exists a finite quasi-\'etale Galois cover $\nu\colon \bar Z \to Z$ with normal $\bar Z$  
such that $(\nu^* \mathcal{F})^{* *}$ is  locally free and numerically flat on $\bar Z$. 
\end{theo}
\begin{proof}
The same conclusion has been proved in \cite[Theorem 1.8]{HP19} when $\mathcal{F}$ is almost nef. 
We can easily see that the same proof works when $F$ is pseudo-effective. 
(Strictly speaking, the cited result \cite[Theorem 1.8]{HP19} assumes that $X$ is klt, 
but this assumption can be relaxed to potentially klt (see \cite[Theorem 1.20]{GKP16b}). 
\end{proof}

\begin{prop}\label{cor-free}
The sheaf $\mathcal{E}_{m}$ is  locally free  and numerically flat on $X$ for every $m \in \mathbb{Z}_{>0}$.  
\end{prop}

\begin{proof}
When $X$ is smooth, 
the sheaf $\mathcal{E}_{m}$ is locally free and numerically flat on $X$ by \cite[Theorem 1.2]{HIM19}. 
When $X$ is singular, we need to take a quasi-\'etale cover 
to obtain the same conclusion for the pullback of $\mathcal{E}_{m}$, 
as explained in {\hyperref[ex-qabelian]{Example \ref{ex-qabelian}}} and {\hyperref[thm-etaleflat]{Theorem \ref{thm-etaleflat}}}. 
Nevertheless, since we are assuming that $X$ is a maximally quasi-\'etale cover, 
we obtain the desired conclusion without taking any quasi-\'etale cover.

We first show that it suffices  to prove that 
$\mathcal{E}_{m}$ is  locally free  on $X$ for every $m \in \mathbb{Z}_{>0}$. 
For the resolution of singularities $\pi \colon M \to  X$, 
the pullback $\pi^{*}  \mathcal{E}_{m}$ also satisfies the conclusions of \hyperref[thm-free]{Proposition \ref*{thm-free}}. 
Here, we essentially used the local freeness. 
Hence $\pi^{*} \mathcal{E}_{m}$ is numerically flat 
by \cite[Theorem 1.2]{HIM19} (which corresponds the smooth case of  {\hyperref[thm-etaleflat]{Theorem \ref{thm-etaleflat}}})
and so is $ \mathcal{E}_{m}$.

Since $(X, \Delta)$ is a $\QQ$-factorial terminal pair, 
the variety $X$ has terminal singularities. 
Hence $X$ is smooth in codimension two. 
This indicates that $X$ and $\mathcal{E}_{m}$ satisfy the assumptions of {\hyperref[thm-etaleflat]{Theorem \ref*{thm-etaleflat}}}. 
By {\hyperref[thm-etaleflat]{Theorem \ref*{thm-etaleflat}}},  
for each $m \in \mathbb{Z}_{>0}$, 
we can take a finite quasi-\'etale Galois cover 
$\nu_{m}\colon X_{m} \to X$ such that $(\nu_{m}^* \mathcal{E}_{m})^{* *}$ is locally free. 
Since $X$ is a maximally quasi-\'etale cover, the cover $\nu_{m}\colon X_{m} \to X$ is actually \'etale. 
This implies that  $\mathcal{E}_{m} $ is locally free on $X$. 
\end{proof}

\begin{rem}\label{rem-sub}
Hereafter, the dependence of $\mathcal{E}_{m}$ and $\mathcal{V}_{m}$ on $m$ is negligible, and we thus omit the subscript $m$. 
 \end{rem}

\subsection{Rationally connected fibers and flat connections}
\label{subsec-flat}

In this subsection, we prove {\hyperref[thm-flat]{Theorem \ref*{thm-flat}}} by applying {\hyperref[cor-free]{Proposition \ref*{cor-free}}}.

\begin{proof}[Proof of {\hyperref[thm-flat]{Theorem \ref*{thm-flat}}}]
We omit the subscript $m$ in $\mathcal V_{m}$ and $\mathcal E_{m}$ in the proof. 
The variety $X$ in our setting may have singularities. Thus,  
the $\pi$-exceptional locus $\Exc (\pi)$ may be dominant over $Y$, 
which causes a problem in proving the theorem. 
To overcome this difficulty, we consider 
the normalization $\Gamma$ of the graph of $\psi \colon X \dashrightarrow Y$. 
Both $\phi \colon M \to Y$ and $\pi \colon M \to X$ factorize through $\Gamma$, 
equipped with the morphisms $\varphi \colon \Gamma \to Y$, $\mu \colon \Gamma \to X$, and $\gamma \colon M \to \Gamma$ 
in the following diagram: 

\begin{align}\label{comm0}\xymatrix{
M \ar[dr]_{\gamma} \ar@/^20pt/[drrr]^\pi \ar@/_20pt/[ddr]_{ \phi}& &\\
&\Gamma \ar[d]_{\varphi} \ar[rr]^{\mu} && X \ar@{.>}[lld]^{\psi} \\
&Y.  && 
}
\end{align} 
The $\pi$-exceptional locus $\Exc (\pi)$ may be dominant over $Y$. 
However, the $\mu$-exceptional locus $E_\Gamma:=\Exc (\mu)$ 
is not dominant over $Y$ since $\psi \colon X \dashrightarrow Y$ is almost holomorphic. 

Let $B \subset Y$ be the union of the non-locally free locus of $\mathcal{V}$ 
and non-flat locus of $\varphi \colon \Gamma \to Y$. 
Note that $B$ is of  codimension $\geqslant 2$ by the torsion-freeness of $\calV$ and the normality of $Y$. 
We first confirm the following claim:

\begin{claim}\label{claim2}
We have 
$$
\pi^* \mathcal{E} = \phi^{*}\mathcal{V} 
\text{ on } M \setminus  (\gamma^{-1}(E_\Gamma) \cup  \phi^{-1}(B)). 
$$
\end{claim}
\begin{proof}
The sheaf $\varphi^* \mathcal{V}$ is locally free on $\Gamma \setminus \varphi^{-1}(B)$. 
Hence, we  obtain 
$$
\gamma_* \gamma^*  \varphi^* \mathcal{V}=\varphi^* \mathcal{V} 
\text{ on } \Gamma \setminus \varphi^{-1}(B)
$$ 
by applying the projection formula 
to the algebraic fiber space 
$$\gamma \colon M \setminus \phi^{-1}(B) \to \Gamma \setminus \varphi^{-1}(B).$$ 
Then, we see that  
$$
\mu^* \pi_* \phi^* \mathcal{V}=
\mu^* \mu_* \gamma_* \gamma^* \varphi^* \mathcal{V} 
=\mu^* \mu_* \varphi^* \mathcal{V} =\varphi^* \mathcal{V}
\text{ on } \Gamma \setminus  (E_\Gamma \cup  \varphi^{-1}(B)) 
$$
by using the diagram (\ref{comm0}) and 
the isomorphism $\mu  \colon \Gamma \setminus E_\Gamma \simeq X \setminus \mu(E_\Gamma)$.
This implies that  
\begin{align*}
\pi^* \mathcal{E}
=\gamma^* \mu^* \big( (\pi_* \phi^* \mathcal{V})^{* *} \big) 
=\gamma^*  \big( (\mu^* \pi_* \phi^* \mathcal{V})^{* *} \big) 
= \phi^* \mathcal{V}
\end{align*}
on  $M \setminus  (\gamma^{-1}(E_\Gamma) \cup  \phi^{-1}(B))$.  
Here, we used the definition of $\mathcal{E}$ and 
the local freeness (reflexivity) of $\varphi^* \mathcal{V}$ on $\Gamma \setminus \varphi^{-1}(B)$. 
\end{proof}

Let us return to the proof of {\hyperref[thm-flat]{Theorem \ref*{thm-flat}}}. 
By assumption, we can easily see that   
the pullback $\pi^* \mathcal{E}$ is 
a numerically flat locally free sheaf on $M$. 
Hence $\pi^* \mathcal{E}$ admits a unique flat connection $\nabla$ 
by \cite[Corollary 3.10 and the discussion after]{Sim92} and \cite[Theorem 1.18]{DPS94}.  
By the definition of $B$ and $\codim B \geqslant 2$, 
it is sufficient to show that $\mathcal{V}$ 
admits a flat connection on $Y_{0} \setminus (B \cup C)$ 
for some Zariski closed set $C$ of codimension $\geqslant 2$. 
For simplicity, we assume that $B=\emptyset$ by replacing $Y$ with $Y\setminus B$ 
(i.e.,\,$\mathcal{V}$ is locally free and $\varphi \colon \Gamma \to Y$ is flat). 
We construct the desired  flat connection of $\mathcal{V}$ on $Y_{0} \setminus C$ 
adopting the following strategy: 
\begin{itemize}
\item[(1)] We demonstrate that the connection $\nabla$ of $\pi^* \mathcal{E}$ 
descends to  a flat connection of $\mathcal{V}$ on a Zariski open set in $Y$, 
by using  $\pi^* \mathcal{E} = \phi^{*}\mathcal{V} $ over $Y \setminus \varphi(E_\Gamma) $. 
\item[(2)] We demonstrate that the flat connection of $\mathcal{V}$ constructed in (1) 
can be extended on $Y_{0} \setminus C$ for some Zariski closed set $C$ of $\codim \geqslant 2$. 
\end{itemize}

\smallskip

\noindent 
(1) We first consider only a neighborhood (in the analytic topology) of a given point in $Y$. 
Therefore, assuming that $Y $ is a (sufficiently small) open ball, 
we take a local frame of $\mathcal{V}$ on $Y$. 
This frame determines the frame of $\pi^* \mathcal{E}$ on $M \setminus \gamma^{-1}(E_\Gamma)$ 
by  {\hyperref[claim2]{Claim \ref*{claim2}}} (recalling that $B=\emptyset$). 
The flat connection $\nabla$ of $\pi^* \mathcal{E}$ can be written as $\nabla=d + \Xi$. 
Here $d$ is the exterior derivative and $\Xi$, 
which is the connection form, is a $(1,0)$-form valued in 
$\End(\pi^* \mathcal{E})$.
We regard $\Xi$ as a matrix of $(1,0)$-forms with respect to the fixed frame. 
By the flatness of $\nabla$, 
the matrix $\Xi$ satisfies the equation  
$$
d \Xi + \Xi \wedge \Xi=0. 
$$
The $(1,1)$-part of the left-hand side is $\dbar \Xi$ (which is zero). 
Hence $\Xi$ can be considered as 
a matrix of holomorphic one-forms 
on $M \setminus \gamma^{-1}(E_\Gamma)$.

We will confirm that $\Xi$ descends to a holomorphic $(1,0)$-form $\Xi'$ 
valued in $\End(\mathcal{V})$ on  $Y_{1}$. 
Here $Y_{1}$ is defined by 
%\begin{align*}
%Y_{1}:=\{y \in Y \setminus \varphi (E_{\Gamma}) \,|\,\text{the fiber } M_y:=\phi^{-1}(y) 
%\text{ is smooth and rationally connected}. 
%\}
%\end{align*}
\begin{equation*}
Y_{1}:=\left\{
y \in Y \setminus \varphi (E_{\Gamma})\;\left|\;
  \begin{gathered}
    \text{The fiber } M_y:=\phi^{-1}(y) \text{ is } \\
    \text{smooth and rationally connected}.
  \end{gathered}
\right.
\right\}
\end{equation*}
Since $\psi \colon X \dashrightarrow Y$ is an MRC fibration, 
the fiber $X_y:=\psi^{-1}(y)$ of a general point $y \in Y$ is rationally connected, 
and  so is the fiber $M_y$. 
This indicates that $Y_{1}$ is a non-empty Zariski open set in $Y$. 
It is possible that $\varphi(E) = Y$, but we have $\varphi (E_{\Gamma}) \subsetneq Y$, 
which is the reason why we consider $E_\Gamma$ rather than $E$.
The fiber $M_y$ at $y \in Y_{1}$ has no non-trivial holomorphic one-form. 
Hence, the components of $\Xi$, which are holomorphic one-forms, 
vanish along the fiber $M_y$ at $y \in Y_{1}$. 
Furthermore, 
we have 
$\End(\pi^* \mathcal{E})=\phi^{*} \End (\mathcal{V})$ over $Y  \setminus \varphi(E_\Gamma) $ 
by {\hyperref[claim2]{Claim \ref*{claim2}}}. 
Therefore, we  find a holomorphic section 
$$
\Xi' \in 
H^0(Y_1, \Omega^{1}_Y \otimes \End (\mathcal{V})) 
\text{ such that } \Xi=\phi^* \Xi'. 
$$

\smallskip 
\noindent 
(2)
We will prove that $\Xi' $ can be extended  to a section on $Y_{0} \setminus C$. 
Let $P \subset Y_{0}$ be a divisorial component of $Y \setminus Y_{1}$.
It is sufficient to extend $\Xi' $ through a general point $y \in P$.  
The pullback $\phi^*P$ is not $\pi$-exceptional by the definition of $Y_{0}$. 
Hence, we  find a reduced component $P_{0}$ of  $\phi^*P$ 
by property (e) in {\hyperref[prop-2.2]{Proposition \ref*{prop-2.2}}}. 
Note that $P_{0} \not \subset \gamma^{-1}(E_\Gamma)$
since $\gamma^{-1}(E_\Gamma)$ is $\pi$-exceptional. 
Furthermore, the induced morphism $\phi \colon P_{0} \to P$ is surjective 
since $\phi \colon M \to Y$ is flat over $Y_{0}$ (recalling that $P \subset Y_{0}$). 
Therefore, for a general point $y \in P$, 
we find a point $x \in P_0 \cap M_{y}$ 
such that  
$$\text{
$x \not \in \gamma^{-1}(E_\Gamma)$ and 
$x$ is a smooth point of $\phi$. 
}
$$
The first property shows that 
all the components of the matrix $\Xi=\phi^*\Xi'$ are bounded on a neighborhood of $x$, 
since $\Xi$ is defined not only on $\phi^{-1}(Y_{1})$ 
but also on $M \setminus \gamma^{-1}(E_\Gamma)$. 
The second property yields a local section of $\phi \colon M \to Y$ from $y $ to $x$. 
By pulling back $\Xi=\phi^* \Xi'$ by this local section, we see that 
all the components of the matrix $\Xi'$ are also bounded on a neighborhood of $y$. 
Therefore, the section $\Xi'$ can be extended 
through a general point $y \in P$ by the Riemann extension theorem. 

The extended connection of $\mathcal{V}$ (defined locally on $Y_{0} \setminus C$) 
satisfies the gluing condition as a connection and is  flat from the flatness of $\nabla$. 
Hence, the glued connection determines the flat connection  of $\mathcal{V}$ on $Y_0 \setminus C$. 
\end{proof}

\begin{rem}
\label{rmk_thm-flat}
\begin{itemize}
\item[(a)] The key point in this proof is that a general fiber of $\phi \colon M \to Y$ is rationally connected 
(in particular, it has no holomorphic one-form). Furthermore, we implicitly used the fact that $\phi \colon M \to Y$ has connected fibers.
\item[(b)] By the proof of {\hyperref[thm-flat]{Theorem \ref*{thm-flat}}} we see that the flat vector bundle $\calV_m|_{Y_0\setminus  C_m}$ satisfies the condition ($\bullet$) in \hyperref[prop-2.2]{Proposition \ref*{prop-2.2}(d)} for every $m$.
\end{itemize}
%We can not expect these properties
%for the new morphism $\bar M \to \bar Y$ 
%induced  by a quasi-\'etale cover $\nu \colon \bar X \to X$ 
%(constructed in {\hyperref[subsec-fib]{Subsection \ref*{subsec-fib}}}), 
%which causes some troubles. 
 \end{rem}

\section{MRC fibrations of klt pairs with nef anti-log canonical divisor}\label{sec-MRC}

This section is devoted to the proof  of {\hyperref[thm-main]{Theorem \ref*{thm-main}}}. 
To this end, we study MRC fibrations of projective klt pairs with nef anti-log canonical divisor 
by applying {\hyperref[thm-flat]{Theorem \ref*{thm-flat}}} and the theory of foliations.

\subsection{Local constancy of algebraic fiber spaces}

In this subsection, we study the local constancy and relative anti-canonical divisors 
of algebraic fiber spaces. 
The content of this subsection is not only motivated by the proof of  {\hyperref[thm-main]{Theorem \ref*{thm-main}}},  
but also independently interesting. 

The first result (see {\hyperref[thm_rel-anti-nef]{Theorem \ref*{thm_rel-anti-nef}}}) 
gives conditions on singularities and the positivity of  relative anti-canonical divisors 
to  guarantee that algebraic fiber spaces are locally constant, 
which generalizes \cite[Theorem A]{Wang20} (see {\hyperref[cor_rel-anti-nef]{Corollary\ref*{cor_rel-anti-nef}}}).

\begin{theo}
\label{thm_rel-anti-nef}
Let $h \colon V\to W$ be an algebraic fiber space between normal projective varieties with a smooth $W$. 
Let $D$ be an effective $\mathbb{Q}$-divisor on $V$ such that $(V,D)$ is klt and $-(K_{V/W}+D)$ is nef. 
Then $h \colon V\to W$  is a locally constant fibration with respect to $(V,D)$. 
\end{theo}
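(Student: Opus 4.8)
The plan is to reduce Theorem~\ref{thm_rel-anti-nef} to the criterion for local constancy provided by Proposition~\ref{prop_flat-lcf}, via the positivity theory for direct image sheaves. First I would choose an $h$-very ample line bundle $\mathcal{O}_V(1)$ on $V$; after twisting by the pullback of a sufficiently positive line bundle on $W$, I may arrange that $L:=\mathcal{O}_V(1)-h^\ast D_{A}$ (where $D_A$ is the normalized determinant of $h_\ast\mathcal{O}_V(1)$, as in Setting~\ref{setting}) satisfies $c_1(h_\ast(mL))=0$ for all $m$. The key input is the $L^2$-theory of \cite{BP08,CP17,HPS18,PT18} together with the nefness of $-(K_{V/W}+D)$: this yields that, for every $m>0$, the sheaf $E_m:=h_\ast(mL)$ carries a singular Hermitian metric with semi-positive curvature (weakly positively curved), and since $c_1(E_m)=0$ and $W$ is a smooth projective variety, $E_m$ is a numerically flat vector bundle by \cite{DPS94,Sim92,Deng17b,Cao13} (compare Remark~\ref{rmk_prop_flat-lcf}(b)). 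The same argument applies to $F_m:=h_\ast(mL-kD)$ for a suitable $k$ clearing denominators of $D$, since $D$ is effective so that $mL-kD$ inherits the relative anti-nef positivity; moreover $F_m\subset E_m$ is saturated, hence remains a subbundle after possibly shrinking nothing because both are numerically flat.

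The second step is to verify the hypotheses (1), (2), (3) of Proposition~\ref{prop_flat-lcf}. Numerical flatness of $E_m$ and $F_m$ on the compact K\"ahler manifold $W$ gives, by \cite{Sim92} and \cite[Theorem 1.18]{DPS94}, canonical flat connections making them local systems, so (1) holds and $F_m$ is a sub-local system of $E_m$, giving (3). For the compatibility condition (2), i.e.\ that $\operatorname{Sym}^m E_1\to E_m$ respects the flat connections, I would invoke \cite[Lemma 4.3.3]{Cao13} as pointed out in Remark~\ref{rmk_prop_flat-lcf}(b): any morphism between numerically flat vector bundles on a compact K\"ahler manifold is automatically a morphism of local systems. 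By Remark~\ref{rmk_prop_flat-lcf}(c) it suffices to check these for $m=1$ and for one large $m=m_0$, which is harmless. One also needs $h$ flat and $L$ relatively ample; relative ampleness is by construction, and flatness of $h$ can be arranged by restricting to the flat locus $W_0$ (of codimension $\geqslant 2$ complement) — but here I must be careful, because Theorem~\ref{thm_rel-anti-nef} asserts local constancy over all of $W$, not just $W_0$. To bridge this gap I would use that a fibration which is locally constant over a big open subset of a smooth base, and whose total space is normal with klt singularities (inherited from $(V,D)$ klt), extends to a locally constant fibration over all of $W$: the local system on $W_0$ extends across the codimension-$\geqslant 2$ locus by \cite[Satz 3]{GR56}-type extension for the associated flat bundles, and the relative embedding into $\PP(E_1)$ then extends, since the defining equations have globally constant (parallel) coefficients on the universal cover.

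The third step is to run the conclusion of Proposition~\ref{prop_flat-lcf}: the relative embedding $V\hookrightarrow\PP(E_1)$ over $W$, pulled back to the universal cover $\widetilde W$, is cut out by polynomials whose coefficients are parallel sections, hence constant; therefore $V\times_W\widetilde W\simeq\widetilde W\times F$, and $V$ is the suspension of the induced representation $\pi_1(W)\to\operatorname{Aut}(F)$. The sub-local system $F_m$ carrying the data of $kD$ then shows $D$ is invariant under this representation and $(V,D)$ is the corresponding quotient of $(\widetilde W\times F,\operatorname{pr}_2^\ast D_F)$, which is exactly the assertion that $h$ is a locally constant fibration with respect to $(V,D)$.

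The main obstacle I expect is two-fold. First, establishing the weak positivity of $E_m=h_\ast(mL)$ from $-(K_{V/W}+D)$ nef (rather than semi-ample or big) requires the full strength of the metric positivity results for direct images of relative pluricanonical-type bundles twisted by nef classes — one must write $mL$ in a form $K_{V/W}+(\text{nef})+(\text{relatively ample})$-like and invoke the appropriate version from \cite{PT18} or \cite{HPS18}, dealing with the singular Hermitian metric on the nef twist. Second, and more delicate, is the passage from local constancy over the flat locus $W_0$ to local constancy over all of $W$: one has to check that extending the flat bundles across the small locus is compatible with the geometric suspension structure, using the klt (hence potentially simply-connected-in-codimension-reasons) hypothesis on $(V,D)$ and the equidimensionality/normality to control the fibers over $W\setminus W_0$. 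This is precisely the kind of argument carried out in \cite[\S 3.4, \S 4.2]{Wang20}, which the proof can cite, but making it work in the present generality (arbitrary klt $D$, only relative anti-nefness) is where the real content lies.
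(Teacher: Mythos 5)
Your overall plan—normalize a relatively ample line bundle $L$ so that $c_1(h_\ast(mL))=0$, show that $E_m:=h_\ast(mL)$ and $F_m:=h_\ast(mL-kD)$ are numerically flat, then invoke Proposition~\ref{prop_flat-lcf} via Remark~\ref{rmk_prop_flat-lcf}(b),(c)—is indeed the route the paper takes. However, there is a genuine gap at the hardest step, and also an unnecessary detour.

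The gap is in establishing the weak positivity of $F_m=h_\ast(mL-kD)$. Your justification ``since $D$ is effective so that $mL-kD$ inherits the relative anti-nef positivity'' is backwards: subtracting an effective divisor $kD$ a priori \emph{decreases} positivity, it does not transfer it. What the paper actually does is pass to a log resolution $\mu:V'\to V$ of $(V,D)$, write $K_{V'}+D'\sim_\QQ\mu^\ast(K_V+D)+E$ with $E$ effective $\mu$-exceptional, rewrite $qE+m\mu^\ast A-pD'$ as $qK_{V'/W}-q\mu^\ast(K_{V/W}+D)+(q-p)D'+m\mu^\ast A$ for $q\gg p$, and use the klt condition to show that the associated multiplier ideal $\scrJ(h_{D'}^{(q-p)/q}\cdot\mu^\ast h_A^{m/q})$ is trivial; only then does the $L^2$-positivity machinery (\cite[Corollary 1.3]{Wang20}) give that $(\mu\circ h)_\ast\OX_{V'}(qE+m\mu^\ast A-pD')$ is weakly positively curved, and pushing down along $\mu$ recovers $h_\ast\OX_V(mA-pD)$. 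You flag this as ``where the real content lies'' but leave it unresolved, and the ingredient you are missing is precisely the log-resolution/multiplier-ideal step—without it the argument does not close.

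The detour concerns flatness of $h$. You propose restricting to a big open $W_0$ where $h$ is flat and then laboriously extending the suspension structure across $W\setminus W_0$. But Proposition~\ref{prop_rel-anti-nef}(a) (following \cite[Lemma 3.4]{Wang20}) shows $h$ is flat over \emph{all} of $W$: nefness of $-(K_{V/W}+D)$ forces equidimensionality, and an equidimensional morphism from a Cohen--Macaulay source (here $V$ is potentially klt, hence has rational and therefore CM singularities) to a smooth base is flat. So the entire extension-across-codimension-two discussion is unnecessary and introduces delicate points (extending a suspension structure, compatibility of flat connections across a small locus) that the paper simply never has to confront. Finally, a smaller point: to conclude that $E_m$ and $F_m$ are vector bundles you need the reflexivity of the direct image (which follows from flatness of $h$ and reflexivity of $\OX_V(mA-pD)$) together with \cite[Proposition 2.7]{CCM19} or \cite{Wu20}; the references \cite{DPS94,Sim92,Cao13} then supply the flat connections and the compatibility of the natural morphisms, as in Remark~\ref{rmk_prop_flat-lcf}(b).
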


\begin{cor}\label{cor_rel-anti-nef}
Let $(X,\Delta)$ be a  projective klt pair with the nef anti-canonical divisor $-(K_X+\Delta)$. 
Then, the Albanese map of $X$ is a locally constant fibration with respect to $(X,\Delta)$.
\end{cor}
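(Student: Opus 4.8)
The plan is to apply \hyperref[thm_rel-anti-nef]{Theorem \ref*{thm_rel-anti-nef}} directly to the Albanese map. First I would recall the input that makes this possible: for a projective klt pair $(X,\Delta)$ with $-(K_X+\Delta)$ nef, the Albanese map $\alpha_X\colon X\to A:=\Alb(X)$ is an algebraic fiber space, i.e.\ it is surjective with connected fibers. This is already available from \cite{Cao19, CCM19, Wang20}, the surjectivity being the essential point (it controls the image of $\alpha_X$ inside the abelian variety $A$ and ultimately relies on the positivity of direct images of pluricanonical bundles of subvarieties of abelian varieties together with the nefness of $-(K_X+\Delta)$); the connectedness of the fibers is then obtained by Stein-factorizing $\alpha_X$ and invoking the universal property of the Albanese. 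Granting this, everything else is formal.

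Next I would check that the hypotheses of \hyperref[thm_rel-anti-nef]{Theorem \ref*{thm_rel-anti-nef}} are met with $V=X$, $W=A$, $h=\alpha_X$ and $D=\Delta$. Indeed, $X$ is normal and projective (being klt), $A$ is a smooth projective variety, and $\alpha_X$ is an algebraic fiber space by the previous paragraph. Since $A$ is an abelian variety, $K_A\simeq\OX_A$, so $K_{X/A}=K_X-\alpha_X^\ast K_A\sim_{\QQ}K_X$; in particular $K_{X/A}+\Delta\sim_{\QQ}K_X+\Delta$ is $\QQ$-Cartier and
\[
-(K_{X/A}+\Delta)\sim_{\QQ}-(K_X+\Delta)
\]
is nef. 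Moreover $\Delta$ is effective and $(X,\Delta)$ is klt by hypothesis. Thus \hyperref[thm_rel-anti-nef]{Theorem \ref*{thm_rel-anti-nef}} applies and yields that $\alpha_X\colon X\to A$ is a locally constant fibration with respect to $(X,\Delta)$, which is the assertion of the Corollary.

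The only genuine difficulty in this argument lies outside \hyperref[thm_rel-anti-nef]{Theorem \ref*{thm_rel-anti-nef}}: one needs the surjectivity (and connectedness of fibers) of the Albanese map for klt pairs with nef anti-log canonical divisor, which requires more than the relative statement of the theorem, and which I would import wholesale from \cite{Cao19, CCM19, Wang20}. Once that is in hand, the corollary is an immediate specialization of \hyperref[thm_rel-anti-nef]{Theorem \ref*{thm_rel-anti-nef}} to the case of a base which is already smooth — indeed an abelian variety — so that no resolution, finite cover or $\QQ$-factorization is needed, and the relative canonical divisor coincides, up to $\QQ$-linear equivalence, with the canonical divisor of $X$ itself.
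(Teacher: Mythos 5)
Your argument is correct and is precisely what the paper intends: once one knows the Albanese map $\alpha_X\colon X\to\Alb(X)$ is an algebraic fiber space, the base $\Alb(X)$ is already smooth with $K_{\Alb(X)}\sim 0$, so $-(K_{X/\Alb(X)}+\Delta)\sim_{\QQ}-(K_X+\Delta)$ is nef and \hyperref[thm_rel-anti-nef]{Theorem~\ref*{thm_rel-anti-nef}} applies verbatim with $V=X$, $W=\Alb(X)$, $D=\Delta$.

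One parenthetical remark in your write-up is misleading, though harmless because you also cite the literature for the full claim. You suggest that, once surjectivity of $\alpha_X$ is known, connectedness of its fibers follows formally by Stein-factorizing $\alpha_X\colon X\to Z\to\Alb(X)$ and invoking the universal property of the Albanese. The universal property does yield that $\Alb(\gamma)\colon\Alb(X)\to\Alb(Z)$ is an isomorphism and that the finite map $Z\to\Alb(X)$ is (up to this isomorphism) the Albanese map of $Z$; but a finite surjective Albanese map need not be an isomorphism. For instance, if $Z$ is the double cover of an abelian surface $A$ branched along $2\Theta$ for a principal polarization $\Theta$, one has $q(Z)=2$ and $\alpha_Z\colon Z\to A$ is finite of degree $2$, yet $Z$ is of general type. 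So the connectedness of fibers genuinely uses the nefness of $-(K_X+\Delta)$ and is a theorem of \cite{Cao19,CCM19}, not a formal corollary of surjectivity. Since you import the full statement from those references, the proof of the corollary stands as written.
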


To prove {\hyperref[thm_rel-anti-nef]{Theorem \ref*{thm_rel-anti-nef}}}, 
we need the following preliminary result: 

\begin{prop}
\label{prop_rel-anti-nef}
In the same setting as in {\hyperref[thm_rel-anti-nef]{Theorem \ref*{thm_rel-anti-nef}}}, 
the following statements hold$:$
\begin{itemize}
\item[\rm(a)] $h$ is flat. 
\item[\rm(b)] $D$ is horizontal with respect to $h$. 
\item[\rm(c)] For every pseudo-effective and $\phi$-big divisor $G$ on $V$, 
the direct image sheaf 
$$
h_\ast\OX_V(q(K_{V/W}+D)+G)
$$ 
is weakly positively  curved for every $q\in\ZZ$. 
\item[\rm(d)] For every $\phi$-big divisor $G$ on $V$ and every $m\in\ZZ_{>0}$, 
we define the Cartier divisor  by 
\[
D_{G,m}:=\frac{1}{r_m}\cdot\text{the Cartier divisor associated with }\det\!h_\ast\OX_V(mG), 
\]
where $r_m:=\rank h_\ast\OX_V(mG)$. Then  $G-h^\ast\!D_{G,1}$ is pseudo-effective.
\item[\rm(e)] Let $A$ be a sufficiently ample divisor on $V$ such that 
$$
\Sym^k\!\Coh^0(V_w,\OX_{V_w}(A))\twoheadrightarrow\Coh^0(V_w,\OX_{V_w}(kA))
$$ 
for a general fiber $V_w$. 
Then $D_{A,m}\equiv_{{\rm{num}}}mD_{A,1}$ holds for every $m\in\ZZ_{>0}$.  
\end{itemize}
\end{prop}
\begin{proof}
The proof is obtained from 
considering a resolution of the singularities of $V$ and 
applying the same argument as in \cite{CCM19,Wang20}. 
Specifically, (a) is a direct consequence of \cite[Lemma 43.14]{Wang20}; 
(b) follows  from the proof of \cite[Proposition 3.1(b)]{Wang20} or \cite[Theorem 1.3]{CCM19}; 
(c) follows from the proof of \cite[Lemma 3.4]{CCM19} or \cite[Proposition 3.2]{Wang20}; 
(d) follows from the proof of \cite[Lemma 3.5]{CCM19} or \cite[Proposition 3.3]{Wang20}; 
and (e) follows from \cite[Proposition 3.6]{CH19} or \cite[Proposition 3.5]{Wang20}. 
\end{proof}

\begin{proof}[Proof of {\hyperref[thm_rel-anti-nef]{Theorem \ref*{thm_rel-anti-nef}}}]
Let $A$ be a sufficiently ample divisor on $V$ satisfying 
{\hyperref[prop_rel-anti-nef]{Proposition \ref*{prop_rel-anti-nef}(e)}}. 
By  {\hyperref[prop_rel-anti-nef]{Proposition \ref*{prop_rel-anti-nef}(e)}},  
we may assume that $D_{A,1}$ is a  $\ZZ$-divisor after replacing $A$ with its multiple.
By replacing $A$ with $A-\phi^\ast\!D_{A,1}$, we have the following:  
\begin{itemize}
\item $A$ is pseudo-effective on $V$ by {{\hyperref[prop_rel-anti-nef]{Proposition \ref*{prop_rel-anti-nef}(d)}}}. 
\item $A$ is $h$-very ample. 
\item $\Sym^k\!\Coh^0(V_w,\OX_{V_w}(A))\twoheadrightarrow\Coh^0(V_w,\OX_{V_w}(kA))$. 
\item $D_{A,1}\sim 0$. 
\end{itemize}
Then, the sheaf $h_\ast\OX_V(mA)$ is reflexive 
since $h$ is flat by {\hyperref[prop_rel-anti-nef]{Proposition \ref*{prop_rel-anti-nef}(a)}} 
and weakly positively curved  for any $m\in\ZZ_{>0}$ 
by {\hyperref[prop_rel-anti-nef]{Proposition \ref*{prop_rel-anti-nef}(c)}}. 
Moreover, since $D_{A,m}\equiv_{{\rm{num}}} mD_{A,1}\sim0$ holds, 
we can conclude that $h_\ast\OX_V(mA)$  is a numerically flat vector bundle on $W$ 
by  \cite[Proposition 2.7]{CCM19} or \cite[\S 1, Corollary of Main Theorem]{Wu20}. 

It sufficient to prove that $h_\ast\OX_V(mA-pD)$ is a numerically flat vector bundle 
for every $m\in\ZZ_{>0}$ and for some $p\in\ZZ_{>0}$ rendering $pD$ a $\ZZ$-divisor, 
which follows from {\hyperref[prop_flat-lcf]{Proposition \ref*{prop_flat-lcf}}} 
and {\hyperref[rmk_prop_flat-lcf]{Remark \ref*{rmk_prop_flat-lcf}(b)}}. 
To prove this, for a log resolution $\mu \colon V'\to V$ of $(V,D)$, 
we write as 
\[
K_{V'}+D'\sim_{\QQ}\mu^\ast(K_V+D)+E 
\]
with $(V',D')$ being a klt pair and $E$ being an effective $\mu$-exceptional $\mathbb{Q}$-divisor. 
We fix $p\in\ZZ_{>0}$ such that both $pD$ and $pD'$ are $\ZZ$-divisors. 
For a sufficiently large $q$ (noting that $q$ may depend on $m$), we write as 
\[
qE+m\mu^\ast\!A-pD'\sim qK_{V'/W}-q\mu^\ast(K_{V/W}+D)+(q-p)D'+m\mu^\ast\!A. 
\]
By the klt condition, we have 
$$
\scrJ(h_{D'}^{\frac{q-p}{q}}\cdot\mu^\ast h_A^{\frac{m}{q}})\simeq\OX_{V'} 
$$ 
for $q \gg1$, where $h_A$ is a singular Hermitian metric on $A$ with semi-positive curvature 
and $h_{D'}$ is the singular Hermitian metric associated with the effective $\QQ$-divisor $D'$. 
Hence, by \cite[Corollary 2.1.3]{Wang20}, the direct image sheaf $(\mu\circ h)_\ast\OX_{V'}(qE+m\mu^\ast\!A-pD)$ is weakly positively curved. 
Moreover, by 
$
\mu_\ast(m\mu^\ast\!A-pD')=mA-pD,
$
we obtain 
\[
\mu_\ast\OX_{V'}(qE+m\mu^\ast\!A-pD')\simeq\left(\mu_\ast\OX_V(m\mu^\ast\!A-pD')\right)^{**}\simeq\OX_V(mA-pD)
\]
for $q \gg1$ (cf.\,\cite[Theorem 1.3.1 and Lemma 1.3.2]{Wang-thesis}). 
Consequently, we see that $h_\ast\OX_V(mA-pD)$ is weakly positively  curved and $D_{mA-pD,1}$ is pseudo-effective. 
Meanwhile, we have $D_{mA-pD,1}\equiv_{{\rm{num}}} 0$ 
since $D$ is effective and $D_{A,m}\equiv_{{\rm{num}}} 0$. 
Furthermore, the sheaf $h_\ast\OX_V(mA-pD)$ is reflexive since $h$ is flat and $\OX_V(mA-pD)$ is reflexive. 
Thus, by \cite[Proposition 2.7]{CCM19} or \cite[\S 1, Corollary of Main Theorem]{Wu20}, 
we see that $h_\ast\OX_V(mA-pD)$ is a numerically flat vector bundle for every $m\in\ZZ_{>0}$.     
\end{proof}

We next prove the three lemmas (Lemmas \ref{lemme_nef-big}, \ref{lemme_lcf-birational}, and \ref{lemma_Cartesian-lcf}), 
which are needed for comparing MRC fibrations of a given variety $X$ to 
those of quasi-\'etale covers or birational models of $X$.

\begin{lemm}
\label{lemme_nef-big}
Let $h \colon V\to W$ be a locally constant fibration between normal projective varieties such that the fiber $F$ has 
vanishing irregularity. 
Assume that there is an effective $\mathbb{Q}$-divisor $D$ on $V$ such that $(V,D)$ is klt and $-(K_{V/W}+D)$ is nef. 
Then, any nef and big divisor $B$ on $V$ can be written as 
$B=B_0+h^\ast B_W$, 
where $B_{0}$ is a nef $\QQ$-divisor on $V$ and $B_{W}$ is a nef and big  $\QQ$-divisor  on $W$.  
\end{lemm}
\begin{proof}
Let $m$ be a sufficiently divisible  integer.
By {\hyperref[lemma_local-const-lb-decomp]{Lemma \ref*{lemma_local-const-lb-decomp}}}, 
we can write as $B=B_0+h^\ast B_W$ such that $h_\ast\OX_V(mB_0)$ is a flat vector bundle 
for every $m$ sufficiently divisible. 
We will prove that $B_0$ is nef and $B_W$ is nef and big. 
To this end, we may assume that $W$ is smooth after $W$ is replaced with a resolution of singularities and 
$(V,D)$ is replaced with the induced fiber product. 
Indeed, after the base change, 
the klt condition is preserved, 
the fibration $h$ remains a locally constant fibration, 
both $B_0$ and $B_W$ are obtained from the pullbacks.

We can apply {\hyperref[thm_rel-anti-nef]{Theorem \ref*{thm_rel-anti-nef}}} since $W$ is smooth. 
Hence $h$ is a locally constant fibration also with respect to the pair $(V,D)$. 
Moreover, we see that $B_{0}$ is pseudo-effective by {\hyperref[prop_rel-anti-nef]{Proposition \ref*{prop_rel-anti-nef}(d)}} 
and $E_m:=h_\ast\OX_{V}(mB_0)$ is weakly positively  curved 
by {\hyperref[prop_rel-anti-nef]{Proposition \ref*{prop_rel-anti-nef}(b)}}.
Therefore 
$
E_m=h_\ast\OX_V(mB_0)
$
is a numerically flat vector bundle 
since $h_\ast\OX_{V}(mB_0)$ is a flat vector bundle. 

We next show that $h_\ast\OX_V(mB-mD)$ is a nef vector bundle. 
The tangent sheaf $T_V$ is decomposed into 
$T_V\simeq T_{V/W}\oplus\calE$ with $\calE\simeq h^\ast T_W$ (e.g.,\,see \cite[Remark 21.27]{Wang20})
since $h$ is a locally constant fibration, which implies that   $\calE$ is locally free. 
Then, since $(V,D)$ is a log canonical pair, a functorial resolution $\mu \colon V'\to V$ satisfies  
$T_{V'}\simeq \mu^\ast\calE\oplus T_{V'/W}$ by \cite[Lemma 5.10]{Druel18a}. 
(Note that \cite[Lemma 5.10]{Druel18a} holds for any klt pairs since it depends only on \cite{GKKP11}). 
Consequently, the fibration  $h\circ\mu \colon V'\to W$ is a locally constant fibration by the classical Ehresmann theorem 
(cf.\,\cite[3.17.Theorem]{Hor07} and \cite[\S V.3, Theorem 1 and Theorem 3, pp.\,91-95]{CLN85}). 
Meanwhile, since $B-(K_{V/W}+D)$ is nef and big by assumption, 
the direct image sheaf 
$$
(h\circ\mu)_\ast\OX_V(mK_{V'/W}+m\mu^\ast(B-K_{V/W}-D))
$$ 
is a nef vector bundle for $m\in\ZZ_{>0}$ sufficiently divisible, 
which essentially follows from a variant of \cite[Th\'eor\`eme 1]{Mou97} or \cite[Theorem 1.4 and \S 5]{Fuj16}. 
Indeed, we obtain the nefness by replacing \cite[Theorem 5.1]{Fuj16} or \cite[Theorem 1.4]{PS14} 
with \cite[Variant 1.5]{PS14} in \cite[\S 5, Proof of Theorem 1.4]{Fuj16}. 
Moreover, by \cite[Lemma 1.3.2]{Wang-thesis}, we obtain
\[
(h\circ\mu)_\ast\OX_{V'}(mK_{V/Y'}+m\mu^\ast(B-K_{V/W}-D))\simeq h_\ast\OX_V(mB-mD) 
\]
since the left-hand side is locally free (and thus reflexive). 
Hence $h_\ast\OX_V(mB-mD)$ is a  nef vector bundle.

The decomposition of $B-D$ given by {\hyperref[lemma_local-const-lb-decomp]{Lemma \ref*{lemma_local-const-lb-decomp}}} 
should be $B-D=(B_0-D)+h^\ast\!B_W$ since $h \colon V\to W$ is a locally constant fibration with respect to the pair $(V, D)$.
Therefore  $E_{m,D}:=h_\ast\OX_V(mB_0-mD)$ is a flat vector bundle. Furthermore, by the projection formula, we have
\[
h_\ast\OX_V(mB-mD)\simeq E_{m,D}\otimes\OX_W(mB_W), 
\]
which indicates that $B_W$ is nef.
 
Since $B_{W}$ is nef, by \cite[(7.5) Corollary, p.\,52]{Dem01} or \cite[(7.5) Corollary, p.\,66]{Dem10}, 
there is an ample divisor $H$ on $W$ (independent of $m$) such that $H+mB_{W}$ is very ample for every $m$. For a general hypersurface $H_{m,1}\in|H+mB_{W}|$, we have the exact sequence
\[
0 \to E_m \otimes \OX_{W}(-H) \to E_m \otimes \OX_{W}(mB_{W}) \to  E_m \otimes \OX_{W}(mB_{W})  |_{H_{m, 1}}\to 0. 
\]
Since $E_m$ is numerically flat, we have $\Coh^0(W,E_m\otimes\OX_W(-H))=0$. Hence, we see that 
\[
\dimcoh^0(W,E_m\otimes\OX_{W}(mB_{W}))\leqslant \dimcoh^0(H_{m,1}, \left(E_m\otimes\OX_{W}(mB_{W})\right)|_{H_{m,1}}).
\]
By repeating this process, we obtain 
\begin{align*}
\dimcoh^0(V, \OX_V(mB)) &=\dimcoh^0(W, E_m \otimes\OX_{W}(mB_{W})) \\
&\leqslant \sharp(H_{m,1} \cap H_{m,2} \cap  H_{m,\dim Y}) \cdot \rank\!E_m\\
&\leqslant  (mB_{W}+H)^{\dim Y} \cdot \dimcoh^0(F, mB_0|_{F}).
\end{align*} 
By considering $\lim_{m\to+\infty}( \bullet/m^{\dim X})$ 
(cf.\,\cite[\S 1.2.B, Example 1.2.36, p.\,38-39, \S 2.2.C, Definition 2.2.31 and (2.9), p.\,148, vol.I]{Laz04}), we obtain 
\[
0<\volume_{V}(B) \leqslant C\cdot\volume_{W}(B_{W}) \cdot \volume_{F}(B_0|_{F}) 
\]
for some constant $C$ (depending only  on the dimensions of $X$ and $Y$), which implies that $B_{W}$ is big.

It remains to show that $B_0$ is nef. 
Let $A$ be a very ample divisor on $W$ so that it induces an embedding $W\hookrightarrow\PP\Coh^0(W,\OX_{W}(A))$. Since $E_m$ is a flat vector bundle, $E_m\otimes\OX_{W}(kA)$ is Nakano positive for every $m,k\in\ZZ_{>0}$ (cf.\,\cite[(3.9) Definition, p.\,24]{Dem01} or \cite[(3.9) Definition, p.\,28]{Dem10}). Hence, by the Nakano vanishing theorem (cf.\,\cite[(4.9), p.\,30]{Dem01} or \cite[(4.9), p.\,35]{Dem10}), 
we obtain  
\[
\Coh^i(W,E_m\otimes\OX_{W}((d+1-i)A))=0
\]
for every $i>0$, where $d=\dim\!Y$. 
Consequently, the vector bundle $E_m$ is $(d+1)$-regular with respect to $A$. 
By \cite[Theorem 1.8.3, pp.\,99-100]{Laz04}, we see that  
\[
E_m\otimes\OX_{W}((d+1)A)\simeq h_\ast\OX_{V}(mB_0+(d+1)h^\ast\!A)
\]
is globally generated on $W$. Moreover, since $-K_{V/W}$ is nef and $B_0$ is $h$-relatively big and nef, 
the relative base point freeness shows that $B_0$ is $h$-globally generated. 
Hence $mB_0+(d+1)h^\ast\!A$ is globally generated.
 By letting $m\to+\infty$, we can conclude that $B_0$ is nef.  
\end{proof}

\begin{lemm}
\label{lemme_lcf-birational}
Let $V$, $W$, $V'$ and $W'$ be normal projective varieties satisfying following commutative diagram$:$ 
\begin{center}
\begin{tikzpicture}[scale=2.5]
\node (A) at (0,0) {$W$.};
\node (B) at (0,1) {$V$};
\node (A1) at (-1,0) {$W'$};
\node (B1) at (-1,1) {$V'$};
\path[->,font=\scriptsize,>=angle 90]
(B) edge node[right]{$f$} (A)
(B1) edge node[left]{$f'$} (A1)
(B1) edge node[above]{$\beta_V$} (B)
(A1) edge node[below]{$\beta_W$} (A);
\end{tikzpicture}  
\end{center}
Here $\beta_V$ and $\beta_W$ are birational morphisms and $f$ and $f'$ are fiber spaces. 
Assume that $f'$ is a locally constant fibration whose fiber $F'$ is simply connected. 
Assume  $W$ has klt singularities and that there is an effective divisor $\Delta'$ on $V'$ such that $(V',\Delta')$ is klt and that $-(K_{V'/W'}+\Delta')$ is nef. Then $f$ is a locally trivial fibration. 
\end{lemm}

\begin{proof}
Let $p_W \colon \Unv W\to W$ (resp.\,$p_{W'} \colon \Unv{W'}\to W'$) be the universal cover of $W$ (resp.\,of $W'$), we will prove the stronger statement that the pullback of $V$ over $\Unv{W}$ splits into a product $\Unv{W}\times F$, 
where $F$ is the general fiber of $f$ (which implies that $f$ is a locally trivial fibration).

Since $(V',\Delta')$ is klt and since $f'$ is a locally constant fibration, 
we see that $W'$ has klt singularities. Moreover, since $W$ has also klt singularities, we have $\pi_1(W_1)\simeq\pi_1(W)$ from \cite[Theorem 1.1]{Tak03}. 
Then, the morphism $\beta_W\circ p_{W'}$ lifts to a projective morphism $\tilde\beta_W \colon \Unv{W'}\to \Unv W$ by the map lifting lemma, and the bottom square of the commutative diagram below is Cartesian. In addition, we have $(\tilde\beta_W)_\ast\OX_{\Unv{W'}}\simeq\OX_{\Unv W}$. 

\begin{center}
\begin{tikzpicture}[scale=2.0]
\node (A) at (0,0) {$W$};
\node (B) at (0,1) {$V$};
\node (A1) at (-1,0) {$W'$};
\node (B1) at (-1,1) {$V'$};
\node (A2) at (-2.4,-0.5) {$\Unv{W'}$};
\node (B2) at (-2.4,1.5) {$V'\underset{W'}{\times}\Unv{W'}\simeq\Unv{W'}\times F'$};
\node (S) at (-1.7,0.5) {$\square$};
\node (A') at (1.2,-0.5) {$\Unv W.$};
\node (B') at (1.2,1.5) {$V\underset{W}{\times}\Unv W$};
\node (S') at (0.6,0.5) {$\square$};
\node (Q) at (-0.5,-0.25) {$\square$};
\node (C) at (2.4,2) {$\Unv W\times F'$}; 
\path[->,font=\scriptsize,>=angle 90]
(B) edge node[right]{$f$} (A)
(B1) edge node[right]{$f'$} (A1)
(B1) edge node[above]{$\beta_V$} (B)
(A1) edge node[above]{$\beta_W$} (A)
(A2) edge node[below right]{$p_{W'}$} (A1)
(B2) edge node[above right]{$p_{V'}$} (B1)
(B2) edge node[left]{$\tilde f'=\pr_1$} (A2)
(B') edge node[right]{$\tilde f$} (A')
(B') edge node[above left]{$p_V$} (B)
(A') edge node[below left]{$p_W$} (A)
(A2) edge node[below]{$\tilde\beta_W$} (A')
(B2) edge node[above]{$\tilde\beta_V$} (B')
(B2) edge[bend left] node[above left]{$\tilde\beta_W\times\id_{F'}$} (C)
(C) edge[bend left] node[right]{$\pr_1$} (A')
(C) edge node[above left]{$\exists\,\beta$} (B');
\end{tikzpicture}
\end{center}

Let us take the fiber product $V\times_W\Unv W$ 
equipped with the natural morphisms $p_V \colon V\times_W\Unv W\to V$ and $\tilde f \colon V\times_W\Unv W\to \Unv W$. 
Note that $p_{V'}$ is  the universal cover of $V'$ 
since $F'$ is simply connected. Hence $\beta_V\circ p_{V'}$ lifts to the morphism 
$$
\tilde\beta_V \colon \Unv{W'}\times F'\to V\times_W\Unv W.
$$ 
Furthermore, the morphism $\tilde\beta_V$ is projective and we have $(\tilde\beta_V)_\ast\OX_{\Unv{W'}\times F'}\simeq\OX_{V\times_W\Unv W}$ since both $\tilde f'$ and $\tilde\beta_W$ are  projective and have connected fibers. 

Take an ample divisor $A$ on $V$.  
Then $B:=\beta_V^\ast A$ is nef and big over $V'$, 
we can apply {\hyperref[lemme_nef-big]{Lemma \ref*{lemme_nef-big}}}, 
and write $B=B_0+(f')^\ast B_{W'}$ such that $B_0$ is nef and $B_{W'}$ is nef and big. 
Moreover $B_{W'}$ is $\beta_W$-relatively numerically trivial. 
To see this, we take an integral curve $C_0$ on $W'$ that is contracted by $\beta_W$, and let $C_1$ be a curve on $V'$ such that $g(C_1)=C_0$.  
Then $C_1$ must be contracted by $\beta_V$. 
Indeed, otherwise $\beta_V(C_1)$ would be contained in some fiber of $f$ and thus $C_1$ would be contracted by $f'$,  which is absurd. 
Therefore we have
\[
B_0\cdot C_1+B_{W'}\cdot C_0=B\cdot C_1=\beta_V^\ast A\cdot C_1=0. 
\]
Since $B_0$ and $B_{W'}$ are nef, we must have $B_{W'}\cdot C_0=0$, i.e.,\,$B_{W'}$ is $\beta_W$-relatively numerically trivial. 
Hence, we can find a big and nef divisor $A_W$ on $W$ such that $B_{W'}\equiv\beta_W^\ast A_W$.

By applying \cite[Lemma 4.2.4]{Wang-thesis} we see that $\tilde\beta_V$ factorizes through $\tilde\beta_W\times\id_{F'}$, i.e.,\,there is a projective morphism $\beta \colon \Unv W\times F'\to V\times_W\Unv W$ such that $\tilde\beta_V=\beta\circ(\tilde\beta_W\times\id_{F'})$. 
Furthermore, we have $\beta_\ast\OX_{\Unv W\times F'}\simeq\OX_{V\times_W\Unv W}$. 
Hence $\beta$ must be birational. 
Then, from \cite[Lemma 4.2.3]{Wang-thesis}, we conclude that $\tilde f$ induces a decomposition of $V\times_W\Unv W$ into a product $\Unv W\times F$. In particular, the fibration $f$ is  locally trivial. 
\end{proof}

\begin{lemm}
\label{lemma_Cartesian-lcf}
Let $h \colon V\to W$ be an algebraic fiber space between projective varieties, and $D$ be an effective $\mathbb{Q}$-divisor on $V$. 
Assume that $W$ has klt singularities and that there is a projective birational morphism $\mu \colon W'\to W$ such that the base change morphism 
$h' \colon V':=V\times_W W'\to W'$ is a locally constant fibration with respect to $(V',D')$ where $D'$ is the pullback of $D$. 
Then $h$ is also a locally constant fibration with respect to $(V,D)$.   
\end{lemm}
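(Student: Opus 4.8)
The plan is to descend the local constancy of $h':V'\to W'$ along the birational morphism $\mu:W'\to W$ by exploiting the universal-cover description of a locally constant fibration together with the klt (hence normal) hypothesis on $W$. First I would record the set-up: by assumption there is a fibre $F$, a representation $\rho':\pi_1(W')\to\Aut(F)$ and an identification $(V',D')\simeq(\Unv{W'}\times F,\pr_2^\ast D_F)/\pi_1(W')$ over $W'$. Since $\mu$ is a projective birational morphism of normal varieties and $W$ has klt (in particular rational) singularities, the natural map $\pi_1(W')\to\pi_1(W)$ is an isomorphism by \cite[Theorem 1.1]{Tak03} (applied after a further resolution, or directly using that $W$ is potentially klt). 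Hence $\rho'$ factors through a representation $\rho:\pi_1(W)\to\Aut(F)$, and $D_F$ is $\pi_1(W)$-invariant. I would then form the suspension $(\widehat V,\widehat D):=(\Unv W\times F,\pr_2^\ast D_F)/\pi_1(W)$, which is a locally constant fibration $\widehat h:\widehat V\to W$ with respect to $(\widehat V,\widehat D)$, and the goal becomes to produce an isomorphism $(\widehat V,\widehat D)\simeq(V,D)$ over $W$.

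Next I would compare $\widehat V$ and $V$. Pulling back by $\mu$, the fibre product $\widehat V\times_W W'$ is by construction the suspension of $\rho'$ over $W'$, hence isomorphic to $V'=V\times_W W'$ over $W'$, compatibly with the divisors ($\widehat D$ pulls back to $D'$, which is $\mu_X^\ast D$). Thus we have a birational map $\widehat V\dashrightarrow V$ over $W$ which restricts to an isomorphism over the locus where $\mu$ is an isomorphism, i.e. over a Zariski open $W_0\subset W$ with $W\setminus W_0$ of codimension $\geqslant 2$ (here one uses that $\mu$ is small on the complement of a codimension-$2$ set, which holds after discarding the — at most codimension $2$ — image of the $\mu$-exceptional locus; one can always arrange $\codim(W\setminus W_0)\geqslant 2$ by the normality of $W$). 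Now both $\widehat h$ and $h$ are \emph{proper} with $\widehat V,V$ normal, and the birational map between them is an isomorphism in codimension one over $W$; since $\widehat h$ is a locally trivial fibre bundle, $\widehat V$ is normal and $\widehat h$ has no exceptional divisors, so the birational map $\widehat V\dashrightarrow V$ extends to an isomorphism $\widehat V\simeq V$ over $W$ by the normality of $V$ together with properness of both morphisms (a birational morphism that is an isomorphism in codimension one between normal varieties, one of which is a fibre bundle, is an isomorphism). This isomorphism carries $\widehat D$ to $D$ since they agree in codimension one and $D$ is a Weil divisor on the normal variety $V$.

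The main obstacle I anticipate is precisely this last extension step: a priori one only controls the comparison over $W_0$, and one must rule out that $h$ has a genuinely different (e.g. non-isotrivial, or non-locally-trivial) behaviour over the codimension-$\geqslant 2$ locus $W\setminus W_0$. The key leverage is that $\widehat h$ \emph{is} a locally trivial bundle over all of $W$, so it has a well-defined fibre everywhere; comparing Hilbert polynomials (or using that $h$, being a fibre space obtained by base change, has the same general fibre $F$) forces the isomorphism to extend. Concretely I would argue as follows: embed $V\hookrightarrow\PP(E)$ over $W$ for a suitable $h$-ample bundle $E$ (so that $h$ is flat and projective), and likewise $\widehat V\hookrightarrow\PP(\widehat E)$; the fibrewise defining equations of $\widehat V$ are constant along $\Unv W$ by local constancy, and they agree with those of $V$ over $W_0$, hence (by the Riemann extension theorem applied to the coefficients, exactly as in the proof of \hyperref[prop_flat-lcf]{Proposition \ref*{prop_flat-lcf}}) everywhere on $W$. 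This realises $V$ as the suspension of $\rho$, i.e. shows $h$ is a locally constant fibration with respect to $(V,D)$, which is the assertion.
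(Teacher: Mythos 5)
Your high-level strategy coincides with the paper's: use the isomorphism $\pi_1(W')\simeq\pi_1(W)$ coming from \cite[Theorem 1.1]{Tak03}, factor $\rho'$ through a representation $\rho$ of $\pi_1(W)$, and exhibit $V$ as the suspension of $\rho$ by pulling the product structure back from $W'$ to $W$. The gap is in the last step, where you pass from agreement over $W_0:=W\setminus\mu(\Exc(\mu))$ to an isomorphism over all of $W$. You write that ``a birational morphism that is an isomorphism in codimension one between normal varieties, one of which is a fibre bundle, is an isomorphism.'' Two problems: first, you only have a birational \emph{map} $\widehat V\dashrightarrow V$ over $W$, not a morphism, and you have done nothing to show that it is a morphism in either direction; second, even for a genuine proper birational \emph{morphism} that is small (an isomorphism in codimension one), normality of source and target does not force it to be an isomorphism --- small resolutions and flops give counterexamples. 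The fibre-bundle hypothesis on $\widehat V$ does not rescue the claim, since $\widehat V$ is $\QQ$-factorial only when the fibre $F$ is, and neither $\QQ$-factoriality nor relative ampleness of any divisor on both sides is part of your argument. The paper resolves exactly this point by first \emph{producing a morphism} $\beta:\Unv W\times F\to S:=V\times_W\Unv W$ via the rigidity/factorization lemma \cite[Lemma 3.4]{Wang20} (cf.~\cite[Lemma 1.15]{Deb01}), applied to the projective morphism $\tilde\mu_V:S'\to S$ which contracts the fibres of $\tilde\mu\times\id_F$, and then invokes \cite[Lemma 3.3]{Wang20} to show that this $\beta$ is an isomorphism. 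You skip the construction of a morphism entirely.

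Your fallback (embed into $\PP(E)$ and rerun the argument of {\hyperref[prop_flat-lcf]{Proposition \ref*{prop_flat-lcf}}}) does not fill the gap as written: that proposition assumes that $h$ is \emph{flat} and that $h_\ast(mL)$ are \emph{already} local systems on $W$, while in your setting the flatness of $h$ over the klt base $W$ is not established (flatness is not a consequence of equidimensional fibres when $W$ is singular, and it does not descend along the non-flat morphism $\mu$), and the local-system structure of $h_\ast(mL)$ on $W$ itself is essentially the content of what you are trying to prove; you would first have to establish the fibre-bundle structure of $h$ over a big open subset of $W$ and extend the flat connection across the codimension-two locus, which is again the step that requires a genuine argument. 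To complete the proof along the lines you sketch, you should either import the two lemmas from \cite{Wang20} as the paper does, or else arrange a common $W$-relatively very ample line bundle on $\widehat V$ and $V$ agreeing over $W_0$ and compare the associated sections of a relative Hilbert scheme over $W$, but this second route also needs a flatness statement for $h$ that you have not supplied.
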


\begin{proof}
By \cite[Theorem 1.1]{Tak03}, we have 
$$\pi_1(W')\xrightarrow[\simeq]{\mu_\ast}\pi_1(W).$$
Then, by considering the pullback family over the universal cover of $W$ and $W'$ respectively and applying \cite[Lemma 4.2.3, Lemma 4.2.4]{Wang-thesis}, we can easily deduce that $h$ is a locally constant fibration with respect to $(V,D)$. 
The argument is quite similar (and in fact much simpler than) to that of \hyperref[lemme_lcf-birational]{Lemma \ref*{lemme_lcf-birational}}.
\end{proof}

\subsection{Splitting of tangent sheaves}
\label{ss_MRC_tangent}
In this subsection, we 
deduce the splitting of tangent sheaves for projective klt pairs with anti-log canonical divisor 
from {\hyperref[thm-flat]{Theorem \ref*{thm-flat}}}. 

\begin{theo}
\label{thm_splitting}
Let $(X,\Delta)$ be a projective klt pair with nef anti-canonical divisor $-(K_X+\Delta)$.
%Then, up to replacing $X$ with a finite quasi-\'etale Galois cover, 
Assume that $X$ is maximally quasi-\'etale. 
Then, the MRC fibration of $X$ induces a splitting of the tangent sheaf $T_{X}$ of $X$: 
$$
T_X\simeq\calF\oplus\calG 
$$
such that 
\begin{itemize}
\item[$\bullet$] $\calF$ is an algebraically integrable foliation 
whose general leaves are rationally connected fibers of MRC fibrations of $X$$;$ 
\item[$\bullet$] $\calG$ is a $($possibly singular$)$ foliation whose canonical divisor $K_{\calG}\sim_{\QQ}0$. 
\end{itemize}
\end{theo}
Here a (singular) foliation $\mathcal{H}$ denotes 
a saturated subsheaf $\mathcal{H} \subset T_{X}$ that is closed under the Lie bracket. 
By the normality of $X$, 
the restriction $\mathcal{H}|_{X_{\reg}}$ is a genuine foliation in the usual sense, 
and the leaves of $\mathcal{H}$ are thus defined. 
A foliation is said to be {\it algebraically integrable} if its general leaf is Zariski open in its closure (cf.\,\cite{GM89}). 
(See \cite[\S 2.4]{Wang-thesis} for a general account.)

For  the proof of {\hyperref[thm_splitting]{Theorem \ref*{thm_splitting}}}, 
we first confirm the following key lemma, which states that the desired splitting holds in the $\QQ$-factorial terminal case.

\begin{lemm}
\label{lemma_splitting}
%Let $X$ be a variety satisfying  the conclusion of {\hyperref[thm-flat]{Theorem \ref*{thm-flat}}}.
Let $X$ be  as in {\hyperref[thm_splitting]{Theorem \ref* {thm_splitting}}}, and assume in addition that $(X,\Delta)$ is a $\QQ$-factorial terminal pair. 
Then, the MRC fibration of $X$ induces a splitting of the tangent sheaf of $X$ with the same properties as those in the statement of {\hyperref[thm_splitting]{Theorem \ref*{thm_splitting}}}.  
\end{lemm}
\begin{proof}
This proof is based on \cite[\S 3.C, Proof of Theorem 1.2]{CH19} 
and \cite[Step 3 \& Step 4 of the Proof of Theorem 54.1]{Wang20}. 
For the reader's convenience, we briefly recall the proof. 
Let the conditions be the same as those in {\hyperref[setting]{Setting \ref*{setting}}}. 
For a very ample divisor $A$ on $X$, 
we set $G:=\pi^\ast A+cE$ and 
\[
D_{A,c,m}:=\frac{1}{r_m}\cdot\text{the Cartier divisor associated with } \det\!\phi_\ast\OX_M(mG),
\]
where $r_m:=\rank\!\phi_\ast\OX_M(mG)$. 
By replacing $M$ with its successive blow-ups, 
we assume that the $\phi$-relative base locus of $G$ is a divisor. Then $G$ can be written as follows: 
$$
G=G_{\free}+G_{\base}, 
$$
where $G_{\base}$ is the $\phi$-relative fixed part of the linear system $|G|$ and 
$G_{\free}:=G-G_{\base}$ is the $\phi$-relatively generated part. 
The adjunction morphism now admits a factorization 
\[
\phi^\ast\phi_\ast\OX_M(G)\twoheadrightarrow\OX_M(G_{\free})\hookrightarrow\OX_M(G),
\]
which pushes down to $Y$ to give the morphisms
\[
\phi_\ast\OX_M(G)\to\phi_\ast\OX_M(G_{\free})\hookrightarrow\phi_\ast\OX_M(G).
\]
By construction, the composition morphism is the identity, 
and the inclusion $\phi_\ast\OX_M(G_{\free})\hookrightarrow\phi_\ast\OX_M(G)$
 is thus an isomorphism. 
Then,  the surjection $\phi^\ast\phi_\ast\OX_M(G_{\free})\twoheadrightarrow\OX_M(G_{\free})$ induces the morphism 
\[
\pi_G\colon M\to\PP(\phi_\ast\OX_M(G_{\free})) \text{ such that }\OX_M(G_{\free})=\pi_G^\ast\OX_{\PP(\phi_\ast\OX_M(G_{\free}))}(1).
\]
We define $X_G$ to be the image $\pi_G (X)$ with the induced morphism $\psi_G \colon X_G\to Y$, 
then we obtain the following commutative diagram:

\begin{center}
\begin{tikzpicture}[scale=2.5]
\node (A) at (0,0) {$Y$.};
\node (B) at (0,1) {$M$};
\node (C) at (-1,1) {$X$};
\node (D) at (1,1) {$X_G$};
\node (E) at (2,1) {$\PP(\phi_\ast\OX_M(G_{\free}))$};
\path[->,font=\scriptsize,>=angle 90]
(B) edge node[left]{$\phi$} (A)
(B) edge node[above]{$\pi$} (C)
(B) edge node[above]{$\pi_G$} (D)
(D) edge node[below right]{$\psi_G$} (A)
(E) edge [bend left=20] node[below right]{$p$} (A);
\path[dashed, ->,font=\scriptsize,>=angle 90]
(C) edge node[below left]{$\psi$} (A);
\path[right hook->,font=\scriptsize,>=angle 90]
(D) edge (E);
\end{tikzpicture}
\end{center}

Since $(X,\Delta)$ is a $\QQ$-factorial terminal pair and $X$ is maximally quasi-\'etale, 
the sheaf $\calV_m:=\phi_\ast\OX_X(mG)\otimes\OX_Y(-mD_{A,c,1})$
is a flat vector bundle over $Y_0\setminus  C_m$ for every $m$ by {\hyperref[thm-flat]{Theorem \ref*{thm-flat}}}, 
where $\codim C_m\geqslant 2$. We set
\[
\calW_m:=\psi_{G\ast}\OX_{X_G}(m)\otimes\OX_Y(-mD_{A,c,1}). 
\]
Then, by {\hyperref[rmk_thm-flat]{Remark \ref*{rmk_thm-flat}(b)}} and by applying the same argument as \cite[Lemma 5.3]{Wang20}, 
we see that $\calW_m\simeq\calV_m$ over $Y_0$.  
As a consequence, for every $m$,  the sheaf $\calW_m$ is a flat vector bundle on $Y_0\setminus  C_m$ and satisfies the condition ($\bullet$) in {\hyperref[prop-2.2]{Proposition \ref*{prop-2.2}(d)}}. Therefore $\SheafHom[](\Sym^m\calW_1,\calW_m)$ is a flat vector bundle on $Y_0\setminus  C_m$ satisfying the condition ($\bullet$) in {\hyperref[prop-2.2]{Proposition \ref*{prop-2.2}(d)}}. 
Thus, each of the global sections is parallel. In particular, this implies that the compatibility condition (2) in {\hyperref[prop_flat-lcf]{Proposition \ref*{prop_flat-lcf}}} is satisfied. 
Combining this result with {\hyperref[rmk_prop_flat-lcf]{Remark \ref*{rmk_prop_flat-lcf}(c)}}, 
we conclude that the morphism $\psi_G \colon X_G\to Y$ is a locally constant fibration over $Y_0$ with the typical fiber $F$ 
(up to replacing $Y_0$ with a Zariski open set whose complement is of codimension $\geqslant 2$). 
In particular, the tangent sheaf $T_{\phi_G\inv(Y_0)}$ splits into two foliations. 
Indeed, by the definition of locally constant fibrations, 
the fundamental group $\pi_{1}(Y_{0})$ diagonally acts on $\Unv{Y_{0}} \times F$. 
In addition to the normality of $Y$, 
the natural splitting 
$$
T_{\Unv{Y_{0}} \times F} =\pr_{1}^{*} T_{\Unv{Y_{0}} } \oplus \pr_{2}^{*} T_{F }
$$
induces a splitting of $T_{\phi_G\inv(Y_0)}$. 
Meanwhile, by \cite[Lemma 5.5]{Wang20} (noting that the assumption $\pi_1(X_{\reg})=\{1\}$ is not needed), 
any divisorial component of the exceptional locus 
$$
\psi_{G}|_{{\phi^{-1}_{0}}(Y_{0})} \colon {\phi^{-1}_{0}}(Y_{0}) \to \psi_{G}^{-1}(Y_{0})
$$ 
is contained in the $\pi$-exceptional locus $E$.
Hence, the splitting of $T_{\phi_G\inv(Y_0)}$ induces a splitting $T_X\simeq\calF\oplus\calG$  of $T_X$. 
By construction,  general leaves of $\calF$ are fibers of the MRC fibration $\psi \colon X \dashrightarrow Y$ 
(in particular, they are rationally connected). 
Thus, by {\hyperref[prop-2.2]{Proposition \ref*{prop-2.2}(a)}}, 
we find 
$$
K_{\calG}|_{\pi(\phi\inv(Y_0)\setminus  E)}\sim_{\QQ}0. 
$$
This fact indicates that $K_{\calG}\sim_{\QQ}0$ 
since both $\pi(E)$ and $\pi(\phi^{-1}(Y \setminus Y_0))$
are of codimension $\geqslant 2$ in $X$ according to {\hyperref[prop-2.2]{Proposition \ref*{prop-2.2}(c)}}. 
\end{proof}

We now prove {\hyperref[thm_splitting]{Theorem \ref*{thm_splitting}}}. 

\begin{proof}[Proof of {\hyperref[thm_splitting]{Theorem \ref*{thm_splitting}}}]
Considering a $\QQ$-factorial terminal model, 
we will reduce the general case to the case of {\hyperref[lemma_splitting]{Lemma \ref*{lemma_splitting}}}. 
Applying \cite[Corollary 1.4.3]{BCHM10}, 
we take a $\QQ$-factorial terminal model $g \colon X^{\terminal}\to X$ of $(X, \Delta)$.  
By construction, 
there is an effective $\QQ$-divisor $\Delta^{\terminal}$ on $X^{\terminal}$ such that 
\[
K_{X^{\terminal}}+\Delta^{\terminal}\sim_{\QQ}g^\ast(K_X+\Delta). 
\]
Hence, the anti-log canonical divisor $-(K_{X^{\terminal}}+\Delta^{\terminal})$ is nef. 
By {\hyperref[thm-mqe]{Theorem \ref*{thm-mqe}}} (2), the variety $X^{\terminal}$ is still maximally quasi-\'etale, and we thus apply {\hyperref[lemma_splitting]{Lemma \ref*{lemma_splitting}}} to get the splitting $T_{X^{\terminal}}\simeq\calF^{\terminal}\oplus\calG^{\terminal}$ with the properties in {\hyperref[thm_splitting]{Theorem \ref*{thm_splitting}}}. Since $g \colon X^{\terminal}\to\bar X$ is birational, the splitting $T_{X^{\terminal}}\simeq\calF^{\terminal}\oplus\calG^{\terminal}$ induces 
a splitting $T_X\simeq\calF\oplus\calG$ with the properties in {\hyperref[thm_splitting]{Theorem \ref*{thm_splitting}}}.
\end{proof}

\subsection{Case of \texorpdfstring{$\QQ$}{text}-factorial terminal pairs with splitting tangent sheaf}
\label{ss_MRC_terminal}
The purpose of this subsection is to confirm the following theorem, 
which asserts that the conclusion of {\hyperref[thm-main]{Theorem \ref*{thm-main}}} holds for $\QQ$-factorial terminal pairs with the splitting tangent sheaf as in {\hyperref[thm_splitting]{Theorem \ref*{thm_splitting}}}.

\begin{theo}\label{thm-main-term}
Let $(X,\Delta)$ be a $\QQ$-factorial terminal pair with nef anti-log canonical divisor $-(K_{X}+\Delta)$. 
Assume that $X$ is maximally quasi-\'etale. 
%Assume that the tangent sheaf of $X$ splits: $T_X\simeq \calF \oplus\calG$, where $\calF$ is an algebraically integrable foliation whose general fiber has rationally connected Zariski closure and $\calG$ is a foliation with $K_{\calG}\sim_{\QQ}0$. 
Then $X$ admits a holomorphic MRC fibration $f \colon X \to Y$ 
such that $f$ is a locally constant fibration with respect to $(X,\Delta)$ and $Y$ is a projective variety with terminal singularities and $K_{Y}\sim_{\QQ}0$. 
%Moreover, there exists a quasi-\'etale cover $X'\to X$ such that 
%\begin{itemize}
%\item $X'$ admits a holomorphic MRC fibration $f' \colon X'\to Y'$; 
%\item $f'$ is a locally constant fibration with respect to the pair $(X', \Delta')$, where $\Delta'$ is the pullback of $\Delta$; 
%\item $Y'$ is quasi-\'etale cover over $Y$, in particular $K_{Y'}\sim_{\QQ}0$. 
%\end{itemize}
\end{theo}

\begin{rem}\label{rem-ex}
\begin{itemize}
\item This theorem does not directly imply {\hyperref[thm-main]{Theorem \ref*{thm-main}}} even in the $\QQ$-factorial terminal case. 
The point is that to make $X$ maximally quasi-\'etale, we need to take a quasi-\'etale cover (see {\hyperref[thm-mqe]{Theorem \ref*{thm-mqe}(1)}}), but taking quasi-\'etale covers destructs $\QQ$-factoriality, and thus prevents us from applying {\hyperref[thm-main-term]{Theorem \ref*{thm-main-term}}}. 
Therefore, the argument in {\hyperref[ss_MRC_klt]{Subsection \ref*{ss_MRC_klt}}} is necessary for the proof of {\hyperref[thm-main]{Theorem \ref*{thm-main}}} even in the $\QQ$-factorial terminal case. 
\item In the above theorem, the condition that $X$ is maximally quasi-\'etale is indispensable. Indeed, 
the theorem does not hold without this condition, even when we assume the splitting of the tangent sheaf. 
Consider the following example: Let $A$ be an abelian variety of dimension $\geqslant 2$, and $\sigma$ be an involution of $A$, consider the group $\langle\sigma\rangle\simeq\ZZ/2\ZZ$ acting on $A\times\PP^1$ by $\sigma\cdot(a,[z,w])\mapsto(\sigma(a),[z,-w])$, and let $X$ be the quotient of $A\times\PP^1$ by this action. 
Then $X$ has $\QQ$-factorial terminal singularities with $-K_X$ nef, and we get a fiber space $f \colon X\to Y$ where $Y:=A/\langle\sigma\rangle$.
Moreover, the natural splitting of $T_{A\times\PP^1}$ induces a splitting of $T_X$. Nevertheless $f$ is not a locally constant fibration.  
(In fact, $f$ is not even semistable.)
\end{itemize}
\end{rem}

\begin{proof}[Proof of {\hyperref[thm-main-term]{Theorem \ref*{thm-main-term}}}]

The proof  can be divided into three steps:

\setcounter{step}{0}
\begin{step}[{\bf Holomorphicity of MRC fibrations}]
A key observation in this step is that $\calF$ and $\calG$ are weakly regular foliations (cf.\,\cite[Definition 5.4, Lemma 5.8]{Druel18b}) and $\calF \subset T_{X}$ is an algebraically integrable foliation. Therefore we can apply \cite[Theorem 4.6]{DGP20} to confirm that $\calF$ is induced by an equidimensional fiber space: 

\begin{lemm}
\label{lemma_F-canonical-sing}
Let the conditions be the same as those in {\hyperref[thm-main-term]{Theorem \ref*{thm-main-term}}}. 
Then, there exists a holomorphic MRC fibration $f \colon X\to Y$ to  a normal projective variety $Y$
$($which may differ from the original $Y$ chosen in {\hyperref[setting]{Setting \ref*{setting}}}$)$
such that 
\begin{enumerate}
\item[$\bullet$] $f \colon X\to Y$ has equidimensional fibers;
\item[$\bullet$] $\calF$ coincides with the foliation induced by $f \colon X\to Y$.
\end{enumerate}
\end{lemm}

Since $K_{\calG}\sim_{\QQ}0$, we have $K_Y\sim_{\QQ}0$.  
Thus we establish the first conclusion of {\hyperref[thm-main-term]{Theorem \ref*{thm-main-term}}}. 
It remains to show the second conclusion.
\end{step}

\begin{step}[{\bf Semi-stable reduction in codimension one and local constancy of the pullback fibration}]
From the previous step, we have a holomorphic MRC fibration $f\colon X\to Y$. 
We now intend to apply {\hyperref[thm_rel-anti-nef]{Theorem \ref*{thm_rel-anti-nef}}} to show that $f$ is a locally constant fibration 
(up to a quasi-\'etale cover). To this end, we take a resolution of $Y$ and consider the pullback fibration over it, 
but this new fibration is not 
necessarily semi-stable in codimension one. 
This means that the relative anti-canonical divisor is not necessarily nef, 
which prevents us from applying {\hyperref[thm_rel-anti-nef]{Theorem \ref*{thm_rel-anti-nef}}}. 
To overcome this obstruction, we need to take a further finite cover as we precise in the sequel. 

The morphism $f \colon X \to Y$ is semi-stable in codimension one by {\hyperref[prop-2.2]{Proposition \ref*{prop-2.2}(e)}}; 
hence the ramification divisor of $f$ is zero (cf.\,\cite[Definition 2.16]{CKT16}). 
Then, from \cite[Lemma 2.31]{CKT16} and $K_{\calG}\sim_{\QQ}0$, we have 
$$
K_{X/Y}\sim K_{\calF}\sim_{\QQ} K_X,
$$ 
which indicates that $K_Y\sim_{\QQ}0$. 
Meanwhile, since $(X,\Delta)$ is a $\QQ$-factorial terminal pair, 
we see that $(\calF,\Delta)$ has canonical singularities 
by writing 
\[
-K_{\calF}\sim_{\QQ}-(K_X+\Delta)+\Delta
\]
and by \cite[Proposition 5.5]{Druel17}. 
Then, we consider the following diagram:
\begin{center}
\begin{tikzpicture}[scale=2.5]
\node (A) at (0,0) {$Y$.};
\node (B) at (0,1) {$X$};
\node (A1) at (-1,0) {$Y_1$};
\node (B1) at (-1,1) {$X_1$};
\node (A2) at (-2,0) {$Y_2$};
\node (B2) at (-2,1) {$X_2$};
\path[->,font=\scriptsize,>=angle 90]
(B) edge node[right]{$f$} (A)
(B1) edge node[left]{$f_1$} (A1)
(B1) edge node[above]{$\mu_X$} (B)
(A1) edge node[below]{$\mu$} (A)
(B2) edge node[above]{$\sigma_X$} (B1)
(B2) edge node[left]{$f_2$} (A2)
(A2) edge node[below]{$\sigma$} (A1);
\end{tikzpicture}  
\end{center}
Here $\mu \colon Y_1\to Y$ is a resolution of singularities of $Y$ and $\sigma \colon Y_2\to Y_1$ is a finite cover (of Kawamata) that renders $f$ semi-stable in codimension one, and $X_1$ (resp.\,$X_2$) is the normalization of the fiber product $X\times_YY_1$ (resp.\,$X\times_YY_2$). 
Note that, since $f$ is equidimensional, the fiber product $X\times_YY_i$ has only one irreducible component for $i=1,2$.  
Then, by \cite[Proposition 2.4.20]{Wang-thesis}, we see that $\calF_1:=\mu_X^{-1}\calF=T_{X_1/Y_1}$ 
and $K_{\calF_1}\sim_{\QQ}\mu_X^\ast K_{\calF}$    
since $\calF$ has canonical singularities. Moreover, the branch locus of $\sigma$ is invariant under $\calF_1$ (cf.\,\cite[\S 3.4]{Druel18b}), and hence we obtain 
\[
K_{\calF_2}\sim_{\QQ}\sigma_X^\ast K_{\calF_1}\sim_{\QQ}(\mu_X\circ\sigma_X)^\ast K_{\calF}
\]
by \cite[Lemma 3.4]{Druel18b} 
(cf.\,\cite[Proof of Lemma 4.3]{Druel18b}). 
Meanwhile, since $f_2$ is semi-stable in codimension one, we have  $K_{X_2/Y_2}\sim K_{\calF_2}$. 
Set $\Delta_2:=(\mu_X\circ\sigma_X)^\ast\Delta$ (noting that $X$ is $\QQ$-factorial). 
Then, by \cite[Proposition 5.6]{Druel17}, we see that $(X_2, \Delta_2)$ is canonical, 
and thus conclude that $f_2$ is a locally constant fibration by {\hyperref[thm_rel-anti-nef]{Theorem \ref*{thm_rel-anti-nef}}}.
\end{step}

\begin{step}[{\bf Conclusion}]
In this last step, we will deduce the latter conclusion of {\hyperref[thm-main-term]{Theorem \ref*{thm-main-term}}} 
from the local constancy of $f_2$. By taking the Stein factorization of $\mu\circ\sigma \colon Y_2\to Y$, 
we get a finite cover $\gamma \colon Y'\to Y$, which is ramified along the non-semi-stable locus of $f$, 
and thus it is quasi-\'etale. Let  $X'$ be the fiber product $X\times_Y Y'$ with the induced morphisms 
$f' \colon X'\to Y'$ and $\gamma_X \colon X'\to X$. Then, we have the following commutative diagram:
\begin{center}
\begin{tikzpicture}[scale=2.5]
\node (A) at (0,0) {$Y$.};
\node (B) at (0,1) {$X$};
\node (A1) at (-1,0) {$Y'$};
\node (B1) at (-1,1) {$X'$};
\node (A2) at (-2,0) {$Y_2$};
\node (B2) at (-2,1) {$X_2$};
\node (C) at (-0.5,0.5) {$\square$};
\path[->,font=\scriptsize,>=angle 90]
(B) edge node[right]{$f$} (A)
(B1) edge node[left]{$f'$} (A1)
(B1) edge node[above]{$\gamma_X$} (B)
(A1) edge node[below]{$\gamma$} (A)
(B2) edge node[above]{$\rho_X$} (B1)
(B2) edge node[left]{$f_2$} (A2)
(A2) edge node[below]{$\rho$} (A1);
\end{tikzpicture}  
\end{center}
Here, the morphisms $\rho$ and $\rho_X$ are birational by construction. Since $\gamma$ is quasi-\'etale, so is $\gamma_X$.  
%and $X$ is a normal variety. 
Moreover, set $\Delta':=\gamma_X^\ast\Delta$, then $(X',\Delta')$ is terminal and $-(K_{X'}+\Delta')$ is nef. Moreover, since $Y$ is non-uniruled, so is $Y'$. Hence $f'$ coincides with the MRC fibration of $X'$. 
Since $f'$ is a locally trivial fibration by {\hyperref[lemme_lcf-birational]{Lemma \ref*{lemme_lcf-birational}}}, 
the left square in the diagram must be Cartesian. 
Then, by {\hyperref[lemma_Cartesian-lcf]{Lemma \ref*{lemma_Cartesian-lcf}}}, 
the fibration $f'$ is locally constant  with respect to $(X',\Delta')$. 
To conclude, we use the fact that $X$ is maximally quasi-\'etale. 
This indicates that the cover $\gamma_X$ is in fact \'etale, and thus so is $\gamma$. 
As a consequence, the fibration $f$ is a locally constant fibration with respect to $(X,\Delta)$. 
The proof of {\hyperref[thm-main-term]{Theorem \ref*{thm-main-term}}} is thus completed.  
\end{step}
\end{proof}

\subsection{Case of klt pairs with nef anti-log canonical divisor}
\label{ss_MRC_klt}
In this subsection, we deduce {\hyperref[thm-main]{Theorem \ref*{thm-main}}} 
by considering the case treated in {\hyperref[{ss_MRC_terminal}]{Subsection \ref*{ss_MRC_terminal}}}.

\begin{proof}[Proof of {\hyperref[thm-main]{Theorem \ref*{thm-main}}}]

Let $(X,\Delta)$ be a projective klt pair with nef anti-log canonical divisor $-(K_{X}+\Delta)$. 
Up to replacing $(X, \Delta)$ with a maximally quasi-\'etale cover of $X$ with the boundary divisor defined by pullback, 
we may assume that $X$ is maximally quasi-\'etale (see {\hyperref[thm-mqe]{Theorem \ref*{thm-mqe}(1)}}).  
Take a $\QQ$-factorial terminal model $g\colon (X',\Delta')\to (X,\Delta)$ of $(X,\Delta)$. 
Then, the variety $X'$ is also maximally quasi-\'etale by {\hyperref[thm-mqe]{Theorem \ref*{thm-mqe}(2)}}. 
%Let us take a $\QQ$-factorial terminal model $(X^{\terminal},\Delta^{\terminal})$ of $(X, \Delta)$. Then, by {\hyperref[thm_splitting]{Theorem \ref*{thm_splitting}}},  we can obtain a finite quasi-\'etale cover $X''\to X^{\terminal}$ such that the tangent sheaf of $X''$ splits. Note that $(X'',\Delta'')$ is a terminal pair with $\Delta''$ being the pullback of $\Delta$. Let $(X',\Delta')$ be a $\QQ$-factorialization of $(X'',\Delta'')$, then splitting of $T_{X''}$ induces a splitting of $T_{X'}$ since a $\QQ$-factorialization is a small birational morphism. 
%By {\hyperref[thm-main-term]{Theorem \ref*{thm-main-term}}}, we can further assume, up to replacing $X'$ by a further quasi-\'etale cover, that the MRC fibration of $X'$ is a locally constant fibration. Note that after taking a quasi-\'etale cover, $X'$ may no longer be $\QQ$-factorial. By {\hyperref[lemma_term-model]{Lemma \ref*{lemma_term-model}}}, there is a finite quasi-\'etale cover $\bar X\to X$ along with a birational morphism $X''\to\bar X$, such that the composition morphism equals the natural morphism $X''\to X^{\terminal}\xrightarrow{g} X$. Up to replacing $X$ with $\bar X$, we can assume that $(X', \Delta')$ is a terminal model of $(X, \Delta)$, and we denote the birational morphism $X'\to X$ by $g$. 
By {\hyperref[thm-main-term]{Theorem \ref*{thm-main-term}}}, the MRC fibration $f' \colon X'\to Y'$ of $X'$ is a locally constant fibration with respect to $(X',\Delta')$ with $Y'$ having terminal singularities and $K_{Y'}\sim_{\QQ}0$. 
Moreover, the splitting of $T_{X'}$ 
%(see {\hyperref[thm_splitting]{Theorem \ref*{thm_splitting}}}) 
induces a splitting of the tangent sheaf of $X$: $T_X\simeq\calF\oplus\calG$ such that $\calF$ (resp.\,$\calG$) coincides with $\calF'$ (resp.\,$\calG'$) over $X\setminus  g(\Exceptional(g))$ via the isomorphism $X'\setminus \Exceptional(g)\simeq X\setminus  g(\Exceptional(g))$. 
Moreover since $K_{Y'}\sim_{\QQ}0$, we have $K_{\calG}\sim_{\QQ}0$.

This subsection aims to show that $f' \colon X' \to Y'$ induces 
an MRC fibration fibration $f \colon X\to Y$ satisfying the conclusions of {\hyperref[thm-main]{Theorem \ref*{thm-main}}}, 
namely, that  $f \colon X\to Y$ is a locally constant fibration with respect to the pair $(X, \Delta)$ and $Y$ is a projective klt variety with the numerically trivial $K_{Y}$. The proof is divided into two steps: 

\setcounter{step}{0}
\begin{step}[{\bf Holomorphicity of MRC fibrations}]
First, we prove that $f'$ gives rise to an MRC fibration $f\colon X\to Y$ of $X$ defined everywhere.

\begin{theo}
\label{prop_birational-local-const}
Let the conditions remain the same as above. Then, there exists a normal projective variety $Y$ with canonical singularities whose canonical divisor $K_Y\sim_{\QQ}0$, a birational morphism $g_Y \colon Y'\to Y$, and an equidimensional fiber space $f\colon X\to Y$ such that $f\circ g=g_Y\circ f'$ and $f$ coincides with the MRC fibration of $X$. 
\end{theo}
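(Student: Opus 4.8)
The plan is to descend the locally constant fibration $f':X'\to Y'$ through the birational morphism $g:X'\to X$. First I would observe that $f'$ contracts every $g$-exceptional curve: since $\mathcal{F}'$ and $\mathcal{F}$ agree over $X\setminus g(\Exc(g))$ and $g$-exceptional curves are either $\mathcal{F}$-invariant or sent into the (rationally connected) leaves, a general fiber of $f'$ either contains a $g$-exceptional curve or is disjoint from $\Exc(g)$; in fact the key point is that the leaves of $\mathcal{F}'$ through $\Exc(g)$ are $g$-exceptional since they are rationally connected and $X$ is not uniruled along $g(\Exc(g))$ (which has codimension $\geqslant 2$). More precisely, I would invoke the rigidity lemma (\cite[Lemma 3.4]{Wang20}, c.f.~\cite[Lemma 1.15]{Deb01}): if $f'$ contracts all fibers of $g$, then $f'$ factors as $f'=f\circ g$ for some morphism $f:X\to Y$ with $Y$ normal projective. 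To verify the hypothesis, I would use that $g$ is small in codimension considerations combined with $f'$ being equidimensional (from {\hyperref[lemma_F-canonical-sing]{Lemma \ref*{lemma_F-canonical-sing}}} applied to $X'$): the $g$-exceptional locus maps into a codimension $\geqslant 2$ subset of $X$, and its image under $f'$ must be lower-dimensional because $Y'$ is not uniruled while the exceptional fibers are uniruled.

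Next I would establish that $f$ is the MRC fibration of $X$. Since $g:X'\to X$ is birational and $f':X'\to Y'$ is an MRC fibration, a general fiber of $f$ is birational to a general fiber of $f'$, hence rationally connected; and $Y$ is not uniruled since it is dominated by $Y'$ which is not uniruled (being the base of an MRC fibration, by \cite{GHS03}). Thus $f:X\to Y$ is an RC fibration with non-uniruled base, which characterizes the MRC fibration. For the statement $K_Y\sim_{\QQ}0$: we already know $K_{Y'}\sim_{\QQ}0$ from {\hyperref[ss_MRC_terminal]{Subsection \ref*{ss_MRC_terminal}}}, and $g_Y:Y'\to Y$ is birational; since $Y'$ has terminal (hence canonical) singularities and $K_{Y'}\sim_{\QQ}0$, the variety $Y$ has canonical singularities with $K_Y\sim_{\QQ}0$ by the standard discrepancy comparison (pushing forward $K_{Y'}=g_Y^\ast K_Y+\sum a_i E_i$ with $a_i\geqslant 0$ forces $\sum a_i E_i\sim_{\QQ}0$, hence $=0$, so $g_Y$ is crepant and $K_Y\sim_{\QQ}0$). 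Note $Y$ being klt will follow once we also show $Y$ is the base appearing in {\hyperref[thm-main]{Theorem \ref*{thm-main}}}, which is handled in the subsequent steps via {\hyperref[lemma_Cartesian-lcf]{Lemma \ref*{lemma_Cartesian-lcf}}}.

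The main obstacle I anticipate is verifying that $f'$ genuinely contracts \emph{every} fiber of $g$, not just the general ones — the rigidity lemma requires this on the nose. The subtlety is that $g$-exceptional components could a priori be horizontal for $f'$, i.e., dominate $Y'$. To rule this out I would argue as follows: the $g$-exceptional locus $\Exc(g)$ maps into $g(\Exc(g))\subset X$ which has codimension $\geqslant 2$; if an exceptional component $Z$ were horizontal for $f'$, then $f'(Z)=Y'$, but $Z$ is covered by rational curves contracted by $g$ (as $g$ is a birational morphism between terminal varieties and these curves sweep out $\Exc(g)$), and these curves would be $\mathcal{F}'$-tangent or transverse; since $\mathcal{F}'$ gives the MRC fibration, any rational curve meeting a general fiber lies in that fiber, forcing $Z$ vertical — contradiction. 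Combined with the fact that over $X\setminus g(\Exc(g))$ the map $f$ is already defined (as $f'$ composed with the isomorphism there), this shows $f'$ factors through $g$ everywhere. Once this is secured, the remaining verifications (normality of $Y$, $K_Y\sim_{\QQ}0$, the MRC property) are routine applications of the rigidity lemma, \cite{GHS03}, and crepant-descent of canonical divisors.
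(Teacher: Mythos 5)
Your proposal has a genuine gap: you are attempting to show that $f'$ itself factors through $g$, which would give $Y = Y'$, but this is not what happens in general, and the paper's proof introduces a further birational contraction $g_Y:Y'\to Y$ that is essential.

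The issue is in your claim that ``$f'$ contracts all fibers of $g$.'' Your argument correctly notes that a $g$-exceptional rational curve meeting a \emph{general} fiber of $f'$ must lie in that fiber. However, a $g$-exceptional prime divisor $Z$ may be $f'$-vertical without $f'$ contracting the $g|_Z$-fibers: $f'(Z)$ could be a rational curve (in special position in $Y'$, which is permitted since $Y'$ is merely non-uniruled, not free of rational curves), and a $g|_Z$-fiber $\simeq\mathbb{P}^1$ could map surjectively onto $f'(Z)$. Your argument rules out $Z$ dominating $Y'$, but that is not the hypothesis the rigidity lemma needs; the rigidity lemma requires that \emph{every} positive-dimensional fiber $g^{-1}(x)$ be collapsed to a point by the map one wants to descend, and you have not established this for $f'$.

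The paper circumvents this by never claiming $f'$ descends. Instead it uses {\hyperref[lemma_local-const-lb-decomp]{Lemma \ref*{lemma_local-const-lb-decomp}}} to write $B:=g^\ast A = B_0 + (f')^\ast B_{Y'}$, proves in {\hyperref[claim_nef-big]{Claim \ref*{claim_nef-big}}} that $B_0$ is nef and $B_{Y'}$ is nef and big, applies the basepoint-free theorem (using $K_{Y'}\sim_{\QQ}0$) to make $B_{Y'}$ semi-ample, and lets $g_Y:Y'\to Y$ be the induced birational contraction. Then for a $g$-contracted curve $C$ one has $B\cdot C=0$, and since both $B_0$ and $(f')^\ast B_{Y'}$ are nef, each term vanishes separately on $C$; hence $(g_Y\circ f')^\ast A_Y\cdot C=0$ and $g_Y\circ f'$ (not $f'$) contracts $C$, after which rigidity produces $f$. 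Finally $K_Y\sim_{\QQ}0$ is obtained not by crepant descent but via the ramification computation $K_{X/Y}\sim K_{\calF}\sim_{\QQ}K_X$ coming from semi-stability in codimension one. Your crepant-descent argument would also need $K_Y$ to be $\QQ$-Cartier, which is not automatic for the target of a basepoint-free contraction. The nef decomposition $B=B_0+(f')^\ast B_{Y'}$ and the auxiliary contraction $g_Y$ are the heart of the argument, and your proposal omits both.
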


\begin{proof}[Proof of {\hyperref[prop_birational-local-const]{Theorem \ref*{prop_birational-local-const}}}]
The proof of this theorem makes use of {\hyperref[lemme_nef-big]{Lemma \ref*{lemme_nef-big}}}.
Let us take a very ample divisor $A$ on $X$ and set $B=g^\ast\!A$. 
Then $B$ is a nef and big divisor on $X'$. The fiber of $f'$ has vanishing irregularity since it has terminal (and thus rational) singularities and is rationally connected. Hence, from {\hyperref[lemme_nef-big]{Lemma \ref*{lemme_nef-big}}},  we can write 
$B=B_0+(f')^\ast\!B_{Y'}$, where $B_0$ is nef and $B_{Y'}$ is nef and big. By $K_{Y'}\sim_{\QQ}0$, the basepoint-free theorem \cite[\S 3.1, Theorem 3.3, p.\,75]{KM98} implies that $B_{Y'}$ is semi-ample. Up to multiplying  by a sufficiently large and divisible integer, the divisor $B_{Y'}$ induces a birational morphism $g_Y\colon Y'\to Y$ (cf.\,\cite[\S 2.1.B, Theorem 2.1.27, pp.\,129-131, Vol.I]{Laz04}) and there is a very ample line bundle $A_Y$ on $Y$ such that $B_{Y'}\simeq g_Y^\ast A_Y$. Up to replacing $Y$ with its normalization, we can assume that $Y$ is normal. 

We next prove that $g_Y\circ f'$ factorizes through $g\colon X' \to X$. 
To this end, let $C$ be a curve contracted by $g$. 
Then,  we have $g^\ast\!A\cdot C=B\cdot C=0$ by the choice of $C$. 
Since both $B_{Y'}$ and $B_0$ are nef, then we have
\[
(f')^\ast\!B_{Y'}\cdot C=B_0\cdot C=0,
\] 
which implies that $(g_Y\circ f')^\ast\!A_Y\cdot C=0$. Hence $C$ is contracted by $g_Y\circ f'$. By \cite[Lemma 1.15, pp.\,12-13]{Deb01}, 
there is a morphism $f  \colon X\to Y$ such that $g_Y\circ f'=f\circ g$. 
We claim that $f$ is equidimensional. To see this, let $X''$ be the normalization of $X\times_YY'\to Y'$. 
Then, by considering everything over the universal cover of $Y'$ and applying \cite[Lemma 4.2.3]{Wang-thesis}, 
we find that $X''\to Y'$ is locally trivial and in particular equidimensional, and thus so is $f$. 
Moreover, since $g_Y$ is birational, $Y$ is not uniruled.  
Hence $f$ must coincide with the MRC fibration of $X$ 
and $T_{X/Y}\simeq\calF$.

By {\hyperref[prop-2.2]{Proposition \ref*{prop-2.2}(e)}}, 
the morphism $f$ is semi-stable in codimension one. Hence, the ramification divisor of $f$ is zero, and  $K_{X/Y}\sim K_{\calF}\sim_{\QQ}K_X$. Therefore, we obtain that $K_Y$ is $\QQ$-Cartier and $K_Y\sim_{\QQ}0$. 
Consequently, we have $K_{Y'}\sim_{\QQ}g_Y^\ast K_Y$, but $Y'$ has terminal singularities, and thus $Y$ has canonical singularities.   
\end{proof}
\end{step}

\begin{step}[{\bf Conclusion}]
To prove that $f$ is a locally constant fibration, we intend to apply {\hyperref[thm_rel-anti-nef]{Theorem \ref*{thm_rel-anti-nef}}} and {\hyperref[lemma_Cartesian-lcf]{Lemma \ref*{lemma_Cartesian-lcf}}}. 
Nevertheless, in order to apply {\hyperref[thm_rel-anti-nef]{Theorem \ref*{thm_rel-anti-nef}}}, we need to consider the base change fibration over a resolution of singularities of $Y$, but we do not know whether this still satisfies the assumptions of {\hyperref[thm_rel-anti-nef]{Theorem \ref*{thm_rel-anti-nef}}} (i.e.,\,the klt condition and nefness).

To overcome these difficulties, we apply {\hyperref[lemme_lcf-birational]{Lemma \ref*{lemme_lcf-birational}}} (noting that $Y$ has canonical singularities, in particular klt singularities) to show that $f$ is a locally trivial fibration. Let $\mu \colon Y_1\to Y$ be a resolution of singularities of $Y$. Since $f$ is locally trivial, the base change morphism $f_1 \colon X_1\to Y_1$ is still locally trivial and $(X_1,\Delta_1)$ is still klt, where $X_1:=X\times_Y Y_1$ and $\Delta_1$ is the base change of $\Delta$. Note that $\Delta_1$ is locally trivial over $Y_1$ since $\Delta$ is locally trivial over $Y$. The relative anti-canonical divisor $-(K_{X_1/Y_1}+\Delta_1)$, which is equal to the pullback of $-(K_X+\Delta)$, is nef. 
We now can apply {\hyperref[thm_rel-anti-nef]{Theorem \ref*{thm_rel-anti-nef}}} to $f_1\colon (X_{1},\Delta_1) \to Y_{1}$, 
and conclude that $f_1$ is a locally constant fibration with respect to $(X_1,\Delta_1)$. 
Then, since $Y$ has canonical singularities, the same holds for $f$ and $(X,\Delta)$ by {\hyperref[lemma_Cartesian-lcf]{Lemma \ref*{lemma_Cartesian-lcf}}}. This shows that $f$ is a locally constant fibration with respect to $(X,\Delta)$. 
\end{step}
Our main result {\hyperref[thm-main]{Theorem \ref*{thm-main}}} has thus been  proved. 
\end{proof}

\bibliographystyle{alpha}
\bibliography{anti-nef}

\end{document}